\newtheorem{thm}{Theorem}[section]
\newtheorem{prop}[thm]{Proposition}
\newtheorem{cor}[thm]{Corollary}
\newtheorem{lemma}[thm]{Lemma}
\newtheorem{qst}[thm]{Question}
\theoremstyle{definition}
\newtheorem{defn}[thm]{Definition}
\newtheorem*{defn*}{Definition}
\newtheorem{rmrk}[thm]{Remark}
\newtheorem{claim}{Claim}
\newcommand{\ord}{\textnormal{Ord}}
\newcommand{\powerset}{\mathcal{P}}
\DeclareMathOperator{\dom}{dom}
\DeclareMathOperator{\supp}{supp}
\DeclareMathOperator{\crit}{crit}
\DeclareMathOperator{\cf}{cf}
\newcommand{\p}{\mathbb{P}}
\newcommand{\q}{\mathbb{Q}}
\newcommand{\bc}{\mathbb{C}}
\newcommand{\boldt}{\mathbb{T}}
\newcommand{\dotq}{\dot{\mathbb{Q}}}
\newcommand{\dotp}{\dot{\mathbb{P}}}
\newcommand{\dotr}{\dot{\mathbb{R}}}
\newcommand{\lessd}{{<}\delta}
\newcommand{\kl}{(\kappa,\lambda)}
\DeclareMathOperator{\Add}{Add}
\title{Woodin for strong compactness cardinals}
\author{Stamatis Dimopoulos}
\address{School of Mathematics, University of Bristol, University Walk, Bristol, BS8 1TW, UK}
\email{stamatiosdimopoulos@gmail.com}
\thanks{The author is grateful to Arthur Apter and Philipp Schlincht for the very helpful conversations, that shaped the topic of this paper.}
\begin{document}

\begin{abstract}
Woodin and Vop\v enka cardinals are established notions in the large cardinal hierarchy and it is known that Vop\v enka cardinals are the Woodin analogue for supercompactness. Here we give the definition of Woodin for strong compactness cardinals, the Woodinised version of strong compactness, and we prove an analogue of Magidor's identity crisis theorem for the first strongly compact cardinal.
\end{abstract}

\maketitle

\section{Introduction} In \cite{magidor-identity-crises} Magidor established the ``identity crisis" of the first strongly compact cardinal, which can consistently be the first measurable or the first supercompact cardinal. This is by now a classic result in set theory and actually created a new field 
studying the ``identity crises" that accompany concepts related to strong compactness.\footnote{See \cite{identity-crises-I}, \cite{identity-crises-II} and \cite{identity-crises-III} for a very small sample of results in this area.} 
We further contribute to this area by establishing another identity crisis, to a concept created by combining Woodin and strongly compact cardinals.

Woodin and Vop\v enka cardinals, although originally defined in different context and for different reasons, are quite similar. 
A cardinal $\delta$ is Woodin if one of the following two equivalent definitions hold:
\begin{enumerate}
	\item for every function $f:\delta\to \delta$ there is $\kappa<\delta$ which is a closure point of $f$ and there is an elementary embedding $j:V\to M$ with critical point $\kappa$ and $V_{j(f)(\kappa)}\subseteq M$,
	\item for every $A\subseteq V_\delta$ there is a cardinal $\kappa<\delta$ which is $\lessd$-strong for $A$.
	
\end{enumerate} 
It was already known (see 24.19 in \cite{kanamori}) that replacing strongness by supercompactness in (2) we obtain a notion equivalent to Vop\v enka cardinals. Moreover, in \cite{perlmutter-2015}, Perlmutter showed that the same happens with (1) when we replace the clause $V_{j(f)(\kappa)}\subseteq M$ by ${}^{j(f)(\kappa)}M\subseteq M$. This makes Vop\v enka cardinals a Woodinised version for supercompact cardinals.

It is natural to consider what happens in (2) if we instead replace the strongness clause with a strong compactness clause, since strong compactness is an intermediate notion between strongness and supercompactness. In this article we look at this new type of cardinals, which we call \textit{Woodin for strong compactness}, and we explore their properties. For instance, we show that Woodin for strong compactness cardinals also have an equivalent definition which resembles (1), thus making them a reasonable Woodin analogue for strong compactness. The main result we establish is the identity crisis of the first Woodin for strong compactness cardinal. We show that it can consistently be the first Woodin or the first Woodin limit of supercompact cardinals.

The structure of the paper is as follows. In Section 2, we review some known facts about large cardinals and forcing. In Section 3, we give the definition of Woodin for strong compactness cardinals and show that they have properties similar to those of Woodin and Vop\v enka cardinals. Section 4 is split into two subsections, each dealing with one end of the identity crisis of the first Woodin for strong compactness cardinal. Finally, Section 5 includes some further results and open questions.

\section{Preliminaries} 
We will occasionally use interval notation $(\alpha,\beta)$ for two ordinals $\alpha<\beta$, to denote the set $\{\xi\mid \alpha<\xi<\beta\}$.

The large cardinal notions we deal with are witnessed by the existence of elementary embeddings of the form $j:V\to M$, where $V$ is the universe we work in and $M\subseteq V$ is a transitive class. The critical point of an elementary embedding $j$ is denoted by $\crit(j)$. For two cardinals $\kappa,\lambda$ we say $\kappa$ is \emph{$\lambda$-strong} if there is $j:V\to M$ with $\crit(j)=\kappa$, $j(\kappa)>\lambda$ and $V_\lambda\subseteq M$. We will also say $\kappa$ is \emph{${<}\mu$-strong} if it is $\lambda$-strong for all $\lambda<\mu$. We will always assume that $\lambda\geq\kappa$ even when not mentioned explicitly. 

Similarly, we have the concepts of a $\lambda$-supercompact and ${<}\mu$-supercompact cardinal $\kappa$. In this case we isolate the concept of a \emph{$\lambda$-supercompactness embedding} which is an elementary embedding $j:V\to M$ with $\crit(j)=\kappa$, such that $j$ is the ultrapower embedding by a normal ultrafilter on $\powerset_\kappa \lambda$.

A cardinal $\kappa$ is called \emph{$\lambda$-strong for $A$}, where $A$ is any set, if there is a $\lambda$-strongness embedding $j:V\to M$ with $\crit(j)=\kappa$, satisfying the property $A\cap V_\lambda=j(A)\cap V_\lambda$. Analogously, $\kappa$ is \emph{$\lambda$-supercompact for $A$} if there is a $\lambda$-supercompactness embedding $j:V\to M$ with $\crit(j)=\kappa$ and $A\cap V_\lambda=j(A)\cap V_\lambda$. Once again, we use expressions like $\kappa$ is ${<}\mu$-strong for $A$ to mean that $\kappa$ is $\lambda$-strong for $A$ for all $\lambda<\mu$, and it is always assumed that $\lambda\geq\kappa$.

We will make use of the following known result.

\begin{prop}\label{prop:strong-cohere}
Suppose $\kappa\leq \lambda <\mu$, $\kappa$ is ${<}\lambda$-strong (for $A$) and $\lambda$ is $\mu$-strong (for $A$). Then $\kappa$ is $\mu$-strong (for $A$).
\end{prop}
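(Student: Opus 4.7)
The plan is to start from an embedding $j\colon V\to M$ witnessing that $\lambda$ is $\mu$-strong (for $A$), apply elementarity to reflect the ${<}\lambda$-strongness of $\kappa$ inside $M$, and then compose the resulting internal embedding with $j$ to produce a single $V$-embedding witnessing $\mu$-strongness of $\kappa$ (for $A$). The case $\kappa=\lambda$ is immediate from the second hypothesis, so I may assume $\kappa<\lambda$.

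Fix $j\colon V\to M$ with $\crit(j)=\lambda$, $j(\lambda)>\mu$, $V_\mu\subseteq M$, and in the version with $A$ also $A\cap V_\mu = j(A)\cap V_\mu$. Since $\kappa<\lambda=\crit(j)$, we have $j(\kappa)=\kappa$. Now ``$\kappa$ is ${<}\lambda$-strong for $A$'' is first order in the parameters $\kappa,\lambda,A$: for every $\nu<\lambda$ there is a $(\kappa,\nu)$-extender $E$ whose ultrapower embedding $j_E$ satisfies $V_\nu\subseteq\ult(V,E)$ and $A\cap V_\nu = j_E(A)\cap V_\nu$. Applying $j$, $M$ satisfies ``$\kappa$ is ${<}j(\lambda)$-strong for $j(A)$'', and since $\mu<j(\lambda)$ there is inside $M$ an embedding $i\colon M\to N$ with $\crit(i)=\kappa$, $i(\kappa)>\mu$, $V_\mu^M\subseteq N$, and $j(A)\cap V_\mu^M = i(j(A))\cap V_\mu^M$.

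I would then form the composition $k = i\circ j\colon V\to N$, which is elementary. Its critical point is $\kappa$: both $j$ and $i$ fix all ordinals below $\kappa$, while $k(\kappa)=i(\kappa)>\mu$. Since $V_\mu\subseteq M$ we have $V_\mu^M=V_\mu$, hence $V_\mu\subseteq N$. For the agreement on $A$,
\[
k(A)\cap V_\mu \;=\; i(j(A))\cap V_\mu^M \;=\; j(A)\cap V_\mu^M \;=\; A\cap V_\mu,
\]
where the first equality uses the property of $i$ and the second uses the property of $j$. Thus $k$ witnesses that $\kappa$ is $\mu$-strong for $A$, as required.

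The main thing to verify carefully is the elementarity pushdown: that a $(\kappa,\nu)$-extender witnessing $\nu$-strongness for $A$ in $V$, with $\nu<\lambda$, is also such a witness for $j(A)$ inside $M$. This is the standard absoluteness of extenders combined with the coherence condition on $j$: the extender lies in $V_\lambda\subseteq V_\mu\subseteq M$, and $A$ and $j(A)$ agree on $V_\nu$, so the coherence clause with $A$ becomes the coherence clause with $j(A)$ on the relevant initial segment, and the ultrapower is computed identically in $V$ and in $M$. With that point settled, the rest of the argument is bookkeeping.
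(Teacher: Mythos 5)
Your proposal is correct and follows essentially the same route as the paper: take a $\mu$-strongness (for $A$) embedding with critical point $\lambda$, reflect the ${<}\lambda$-strongness of $\kappa$ into the target model by elementarity to get an internal embedding that is $\mu$-strong for the image of $A$, and compose. The extra care you take with the elementarity pushdown and the agreement computation is fine but not needed beyond what direct elementarity already gives.
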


\begin{proof}
Let $j_1:V\to M$ be a $\mu$-strongness embedding with $\crit(j_1)=\lambda$ and $j_1(\lambda)>\mu$. By elementarity, $\kappa$ is ${<}j_1(\lambda)$-strong in $M$ and in particular, $\mu$-strong. Hence, there is a $\mu$-strongness embedding $j_2:M\to N$ with $\crit(j_2)=\kappa$ and $j_2(\kappa)>\mu$. The composition $j:=j_2\circ j_1$ has $\crit(j)=\kappa$, $j(\kappa)>\mu$ and is $\mu$-strong. If we assume that $j_1$ is $\mu$-strong for $A$ and that $j_2$ is $\mu$-strong for $j_1(A)$, then $j$ will also be $\mu$-strong for $A$.
\end{proof}

Since strongness is captured by extenders, the following fact will be useful.

\begin{prop}\label{prop:closure-extender}
Suppose $E$ is a $(\kappa,\lambda)$-extender such the corresponding embedding $j_E:V\to M_E$ satisfies $V_\lambda\subseteq M_E$. If $\cf(\lambda)>\kappa$, then ${}^\kappa M_E\subseteq M_E$.
\end{prop}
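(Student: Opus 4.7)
The plan is to use the standard representation of elements of an extender ultrapower: every $x\in M_E$ can be written as $j_E(f)(a)$ for some $f\in V$ with domain $[\kappa]^{|a|}$ and some seed $a\in[\lambda]^{<\omega}$. So, given a $\kappa$-sequence $\vec x=\langle x_\alpha:\alpha<\kappa\rangle\in V$ of members of $M_E$, I first fix such representations $x_\alpha=j_E(f_\alpha)(a_\alpha)$ for each $\alpha<\kappa$.

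The decisive step uses the cofinality hypothesis to wrangle the seeds. Since each $a_\alpha$ is finite, $\bigl|\bigcup_{\alpha<\kappa}a_\alpha\bigr|\leq\kappa$, and because $\cf(\lambda)>\kappa$ this union is bounded in $\lambda$ by some $\mu<\lambda$. Hence every $a_\alpha$ lies in $[\mu]^{<\omega}$, and the seed sequence $\vec a=\langle a_\alpha:\alpha<\kappa\rangle$ has rank below $\lambda$ (using that $\lambda$ is a limit, which follows from $\cf(\lambda)>\kappa\geq\omega$). Thus $\vec a\in V_\lambda\subseteq M_E$ by hypothesis.

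To bring the function sequence into $M_E$ I would apply $j_E$ directly: let $F=\langle f_\alpha:\alpha<\kappa\rangle\in V$, so that $j_E(F)\in M_E$; since $\crit(j_E)=\kappa$, the restriction $j_E(F)\upharpoonright\kappa$, computed inside $M_E$, is precisely $\langle j_E(f_\alpha):\alpha<\kappa\rangle$. Evaluating $j_E(f_\alpha)(a_\alpha)$ inside $M_E$ for each $\alpha<\kappa$ then assembles $\vec x$ as an element of $M_E$, as required. I expect no serious obstacle: this is essentially bookkeeping with the extender representation, with $\cf(\lambda)>\kappa$ playing the single decisive role of confining the seeds below some $\mu<\lambda$ so that $V_\lambda\subseteq M_E$ can absorb their sequence.
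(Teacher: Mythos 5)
Your proof is correct and follows essentially the same route as the paper's: represent each $x_\alpha$ as $j_E(f_\alpha)(a_\alpha)$, use $\cf(\lambda)>\kappa$ to get the seed sequence into $V_\lambda\subseteq M_E$, and recover $\langle j_E(f_\alpha):\alpha<\kappa\rangle$ as $j_E(\langle f_\alpha:\alpha<\kappa\rangle)\restriction\kappa$. The only difference is that you spell out the bounding of $\bigcup_\alpha a_\alpha$ below some $\mu<\lambda$ in slightly more detail than the paper does.
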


\begin{proof}
Suppose $\langle x_\alpha:\alpha<\kappa\rangle$ is a sequence of elements of $M_E$. Noting that 
$$M_E=\{j(f)(a)| a\in [\lambda]^{<\omega}, f\colon [\kappa]^{|a|}\to V, f\in V\},$$
we can assume that for each $\alpha$, $x_\alpha=j(f_\alpha)(a_\alpha)$, for some $f_\alpha,a_\alpha$. Each $a_\alpha$ is in $V_\lambda$ and as $\lambda$ has cofinality greater than $\kappa$, the whole sequence $\langle a_\alpha:\alpha<\kappa\rangle$ is in $V_\lambda$ and consequently in $M_E$. Also, $\langle j(f_\alpha):\alpha<\kappa\rangle\in M$ because $\langle j(f_\alpha):\alpha<\kappa\rangle=j(\langle f_\alpha:\alpha<\kappa\rangle)\restriction \kappa$. Hence, $\langle x_\alpha:\alpha<\kappa\rangle =\langle j(f_\alpha)(a_\alpha):\alpha<\kappa\rangle\in M$.
\end{proof}

An elementary embedding $j:V\to M$ with $\crit(j)=\kappa$ and $j(\kappa)>\lambda$ is said to satisfy the \emph{weak $\lambda$-covering property} if there is $s\in M$ such that $j``\lambda\subseteq s$ and $M\models |s|<j(\kappa)$. We also say that $j$ satisfies the \emph{$\lambda$-covering property} if for any set $X\subseteq M$ with $|X|\leq \lambda$ there is $s\in M$ such that $X\subseteq s$ and $M\models |s|<j(\kappa)$. A cardinal $\kappa$ is \emph{$\lambda$-strongly compact} if there is $j:V\to M$ with $\crit(j)=\kappa$ that satisfies the weak $\lambda$-covering property. If $j$ also satisfies the $\lambda$-covering property, then it will be called a \emph{$\lambda$-strong compactness embedding}. For a set of ordinals $A$, $\kappa$ is $\lambda$-strongly compact for $A$ if there is a $\lambda$-strong compactness embedding $j:V\to M$ with $\crit(j)=\kappa$, satisfying the property $A\cap \lambda=j(A)\cap \lambda$. As before, expressions like \emph{$\kappa$ is ${<}\mu$-strongly compact} or \emph{${<}\mu$-strongly compact for $A$} mean that the property holds for all $\kappa\leq\lambda<\mu$.

We will see that Woodin for strong compactness cardinals, naturally imply the existence of cardinals which are both strongly compact and strong. We will use the following fact, which was suggested by the anonymous referee and simplifies a lot of the original arguments of the author's exposition.

\begin{prop}\label{prop:sc-str}
If $\kappa$ is both $\lambda$-strong and $\lambda$-strongly compact for some $\lambda\geq\kappa$, then there is an elementary embedding $j:V\to M$ with $\crit(j)=\kappa$, $j(\kappa)>\lambda$, $V_\lambda\subseteq M$, satisfying the weak $\lambda$-covering property. Furthermore, if $\kappa$ is also $\lambda$-strong for $A$ for some set $A$, then the embedding $j$ can also satisfy $A\cap V_\lambda=j(A)\cap V_\lambda$. 
\end{prop}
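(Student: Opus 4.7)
The plan is to exhibit $j$ as a composition $k \circ j_E$ of two elementary embeddings: the first factor will witness $\lambda$-strongness (supplying the clause $V_\lambda \subseteq M$), and the second factor will be chosen so that it supplies the covering seed while fixing $V_\lambda$ pointwise, so that both features survive the composition.

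First, I will fix a witness $j_E \colon V \to M_E$ of $\lambda$-strongness (for $A$ in the second part of the statement), so that $\crit(j_E)=\kappa$, $j_E(\kappa)>\lambda$, $V_\lambda \subseteq M_E$, and, in the $A$-case, $A \cap V_\lambda = j_E(A) \cap V_\lambda$. Using $\lambda$-strong compactness, I will also fix a fine $\kappa$-complete ultrafilter $U$ on $\powerset_\kappa \lambda$.

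The main move is to apply $j_E$ to $U$: by elementarity, $j_E(U)\in M_E$ is a fine $j_E(\kappa)$-complete ultrafilter on $\powerset_{j_E(\kappa)} j_E(\lambda)$ inside $M_E$. Let $k \colon M_E \to M$ be the ultrapower of $M_E$ by $j_E(U)$; this is a well-founded elementary embedding with $\crit(k)=j_E(\kappa)$, and by fineness carries a seed $s \in M$ with $k``j_E(\lambda) \subseteq s$ and $|s|^M < k(j_E(\kappa))$. Setting $j := k \circ j_E$ then gives $\crit(j) = \kappa$ and $j(\kappa) = k(j_E(\kappa)) > \lambda$; the property $V_\lambda \subseteq M$ follows because $\crit(k) = j_E(\kappa) > \lambda$ forces $k$ to fix $V_\lambda$ pointwise, so $V_\lambda \subseteq M_E$ propagates through $k$. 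Weak $\lambda$-covering follows from $j``\lambda \subseteq k``j_E(\lambda) \subseteq s$ together with $|s|^M < k(j_E(\kappa)) = j(\kappa)$. In the $A$-case, the same pointwise fixing of $V_\lambda$ by $k$ gives $j(A) \cap V_\lambda = k(j_E(A)) \cap V_\lambda = j_E(A) \cap V_\lambda = A \cap V_\lambda$.

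The conceptual pivot, and the step I expect to be the main obstacle, is the choice of ultrafilter defining $k$. Using $U$ directly would yield $\crit(k) = \kappa \leq \lambda$ and destroy $V_\lambda$ upon composition; replacing $U$ by $j_E(U)$ raises $\crit(k)$ above $\lambda$, so that the strongness established by $j_E$ is preserved, while the fineness of $j_E(U)$ still delivers the covering seed needed for $j``\lambda$. Everything else is a short verification using Proposition \ref{prop:strong-cohere}--style considerations on composed embeddings.
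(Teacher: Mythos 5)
Your proposal is correct and follows essentially the same route as the paper: compose a $\lambda$-strongness (for $A$) embedding $j_1:V\to M$ with a strong compactness embedding of $M$ whose critical point is $j_1(\kappa)>\lambda$, the paper obtaining the latter by elementarity ("$j_1(\kappa)$ is $j_1(\lambda)$-strongly compact in $M$") where you realise it concretely as the ultrapower of $M$ by $j_1(U)$. Your explicit choice of second factor is in fact exactly the one the paper singles out in Remark \ref{rmrk:full-covering} to upgrade weak covering to full covering.
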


\begin{proof}
Since $\kappa$ is $\lambda$-strong, let $j_1:V\to M$ be a $\lambda$-strongness embedding with $\crit(j_1)=\kappa$. By elementarity, $j_1(\kappa)$ is $j_1(\lambda)$-strongly compact in $M$, so there is an elementary embedding $j_2:M\to N$ with $\crit(j_2)=j_1(\kappa)$, which satisfies the weak $j_1(\lambda)$-covering property. Now, if we let $j:=j_2\circ j_1$, it is easy to see that $\crit(j)=\kappa$ and $j(\kappa)>\lambda$ and since the critical point of $j_2$ is above $\lambda$, $V_\lambda\subseteq N$. Also, $j``\lambda\subseteq j_2``j_1(\lambda)$ and since the latter is covered by a set $s\in N$ of size less than $j_2(j_1(\kappa))$, it follows that $j$ has the weak $\lambda$-covering property.

Finally, if we had assumed that $j_1$ also has the property $j_1(A)\cap V_\lambda=A\cap V_\lambda$, for some set $A$, then using the fact that $\crit(j_2)>\lambda$ we can easily see that $j(A)\cap V_\lambda=A\cap V_\lambda$.
\end{proof}

\begin{rmrk}\label{rmrk:full-covering}
If we make a better choice of embeddings in the previous proof, we can actually guarantee the $j$ will satisfy the full $\lambda$-covering property. Namely, suppose $j_1$ is given by a $(\kappa,\mu)$-extender for some cardinal $\mu$ and that $j_2$ is an ultrapower embedding by a fine $M$-ultrafilter on $(\powerset_{j(\kappa)}j(\lambda))^M$. This implies that $j_2$ satisfies the full $j_1(\lambda)$-covering property in $M$ and that $N\subseteq M$. If we consider now a set $X\subseteq N$ such that $|X|\leq \lambda$, it follows that $X\subseteq M$. Also, using the extender formulation of $M$, we can write $X$ as $\{j_1(f_\alpha)(a_\alpha)\mid \alpha<\lambda\}$ for some sets $a_\alpha\in [\mu]^{<\omega}$ and some functions $f_\alpha:[\kappa]^{|a_\alpha|} \to V$. Clearly, there is a set $Y\in M$ such that $|Y|\leq j(\lambda)$ and $\{j(f_\alpha)\mid \alpha<\lambda\}\subseteq Y$ (just by taking $Y=j(\{f_\alpha\mid \alpha<\lambda\}))$. Using the covering property of $j_2$, we can cover $Y$ in $N$ with a set $s$ such that $|s|^N<j(\kappa)$. Then, it is easy to induce a cover of $X$ from $s$ that is still of size less than $j(\kappa)$ in $N$. 
\end{rmrk}

\begin{cor}\label{cor:sc-str}
Suppose $\kappa$ is a cardinal, $\lambda\geq\kappa$ and $A$ is a set of ordinals. If $\kappa$ is both $\lambda$-strong for $A$ and $\lambda$-strongly compact, then $\kappa$ is $\lambda$-strongly compact for $A$.
\end{cor}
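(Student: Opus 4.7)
The plan is to apply the construction of Proposition \ref{prop:sc-str}, strengthen it using the improvement sketched in Remark \ref{rmrk:full-covering} to obtain the full $\lambda$-covering property, and then convert the agreement $A\cap V_\lambda = j(A)\cap V_\lambda$ into the required agreement $A\cap \lambda = j(A)\cap \lambda$ by exploiting the fact that $A$ is a set of ordinals.

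Concretely, I would begin by using the hypothesis that $\kappa$ is $\lambda$-strong for $A$ to produce a $\lambda$-strongness embedding $j_1:V\to M$ coming from a $(\kappa,\mu)$-extender (for a suitable $\mu\geq\lambda$) that additionally satisfies $A\cap V_\lambda = j_1(A)\cap V_\lambda$; this is standard since one can derive such an extender directly from any witnessing embedding without losing the $A$-coherence on $V_\lambda$. Since by elementarity $j_1(\kappa)$ is $j_1(\lambda)$-strongly compact in $M$, I would then let $j_2:M\to N$ be the ultrapower by a fine $M$-ultrafilter on $(\powerset_{j_1(\kappa)}j_1(\lambda))^M$ and set $j:=j_2\circ j_1$. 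The reason for making this specific choice of $j_2$, rather than an arbitrary embedding witnessing weak covering, is precisely to secure the full $\lambda$-covering property via the argument of Remark \ref{rmrk:full-covering}.

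It then remains to verify the three required properties of $j$. The statements $\crit(j)=\kappa$, $j(\kappa)>\lambda$, $V_\lambda\subseteq N$, and full $\lambda$-covering are exactly as in the two cited results, so I would just import that verification. For the $A$-coherence, I would observe that $\crit(j_2)=j_1(\kappa)>\lambda$, hence $j_2\restriction V_\lambda$ is the identity, and therefore $j(A)\cap V_\lambda = j_2(j_1(A))\cap V_\lambda = j_1(A)\cap V_\lambda = A\cap V_\lambda$. Finally, since $A$ consists of ordinals and an ordinal $\alpha$ lies in $V_\lambda$ iff $\alpha<\lambda$, we get $A\cap V_\lambda=A\cap\lambda$ and similarly $j(A)\cap V_\lambda=j(A)\cap\lambda$, which yields $A\cap\lambda = j(A)\cap\lambda$ as needed.

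I do not expect a serious obstacle here: Proposition \ref{prop:sc-str} and Remark \ref{rmrk:full-covering} do essentially all the work. The only thing to confirm is that the specific choice of factors giving full covering is compatible with the $A$-agreement, and this is automatic from $\crit(j_2)>\lambda$, which holds in both constructions. The final passage from $V_\lambda$-agreement to $\lambda$-agreement is the only place where the hypothesis that $A$ is a set of \emph{ordinals} is used, and it is a free observation rather than a real step.
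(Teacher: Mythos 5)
Your proposal is correct and is exactly the argument the paper intends: the corollary is stated without proof precisely because it follows from Proposition \ref{prop:sc-str} combined with the refinement of Remark \ref{rmrk:full-covering}, with the $A$-coherence surviving because $\crit(j_2)>\lambda$ and the passage from $V_\lambda$-agreement to $\lambda$-agreement being immediate for a set of ordinals. Nothing to add.
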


It will be useful to review the usual characterisations of Woodin and Vop\v enka cardinals. For the proofs, see 24.19 and 26.14 in \cite{kanamori}. 

\begin{prop}\label{prop:defns-woodin}
The following are equivalent for a cardinal $\delta$.
\begin{enumerate}
\item $\delta$ is Woodin, i.e. for every function $f:\delta\to \delta$ there is $\kappa<\delta$ which is a closure point of $f$ and there is an elementary embedding $j:V\to M$ with $\crit(j)=\kappa$ and $V_{j(f)(\kappa)}\subseteq M$.
\item For every $A\subseteq V_\delta$, there is $\kappa<\delta$ which is $\lessd$-strong for $A$.
\end{enumerate}
\end{prop}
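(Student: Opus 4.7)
The plan is to treat the two implications separately. The direction (2)$\Rightarrow$(1) is relatively direct, via coding the function $f$ as a subset of $V_\delta$, whereas (1)$\Rightarrow$(2) is a failure-function argument combined with an extender-derivation trick; the main obstacle I anticipate is in the last step of the second direction.

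For (2)$\Rightarrow$(1), I would fix $f\colon\delta\to\delta$ and let $A\subseteq V_\delta$ encode its graph. Applying (2), obtain $\kappa<\delta$ that is $\lessd$-strong for $A$. The first step is to observe that such a $\kappa$ must automatically be a closure point of $f$: given $\alpha<\kappa$, choose any $\lambda<\delta$ with $\lambda>\max(\kappa,f(\alpha))$ and apply $\lambda$-strongness for $A$ to produce $j\colon V\to M$ with $A\cap V_\lambda=j(A)\cap V_\lambda$; the agreement on the graph of $f$ together with $j(\alpha)=\alpha$ forces $j(f(\alpha))=f(\alpha)$, hence $f(\alpha)<\kappa$. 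The second step uses the fact that the closure points of $f$ form a club in $\delta$: pick a closure point $\lambda$ of $f$ with $\kappa<\lambda<\delta$, apply $\lambda$-strongness for $A$ to get $j\colon V\to M$ with $V_\lambda\subseteq M$, and deduce $j(f)\restriction\lambda=f\restriction\lambda$. In particular $\lambda$ remains a closure point of $j(f)$ in $M$, so $j(f)(\kappa)<\lambda$, which yields $V_{j(f)(\kappa)}\subseteq V_\lambda\subseteq M$, as required.

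For (1)$\Rightarrow$(2) I would argue by contradiction. Given $A\subseteq V_\delta$, suppose no $\kappa<\delta$ is $\lessd$-strong for $A$, and define the failure function $f\colon\delta\to\delta$ by letting $f(\alpha)$ be the least $\beta<\delta$ witnessing that $\alpha$ is not $\beta$-strong for $A$; the contradiction assumption ensures this is well-defined. Apply (1) to $f$ to obtain $\kappa<\delta$ which is a closure point of $f$, together with $j\colon V\to M$ satisfying $\crit(j)=\kappa$ and $V_{j(f)(\kappa)}\subseteq M$. Setting $\lambda:=j(f)(\kappa)$ and passing to the $(\kappa,\lambda)$-extender $E$ derived from $j$, the standard factoring map $k\colon M_E\to M$ (which is the identity on $V_\lambda$ and satisfies $k\circ j_E=j$) produces $j_E\colon V\to M_E$ with $V_\lambda\subseteq M_E$ and $j_E(A)\cap V_\lambda=j(A)\cap V_\lambda$.

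The principal obstacle is then the final identification $A\cap V_\lambda=j(A)\cap V_\lambda$, which is what would make $j_E$ witness $\lambda$-strongness for $A$ at $\kappa$ and thereby contradict the minimality of $f(\kappa)$ via $\lambda\geq f(\kappa)$. This identification does not follow from $V_\lambda\subseteq M$ alone; the resolution is to refine the definition of $f$ so that it carries a code of the restriction $A\cap V_\alpha$ alongside the failure datum, for instance by encoding the pair simultaneously into the single ordinal value $f(\alpha)$. Elementarity of $j$ applied to this refined definition at the closure point $\kappa$, combined with the trivial agreement $A\cap V_\kappa=j(A)\cap V_\kappa$ below the critical point and the fact that $V_\lambda\subseteq M$, then propagates the agreement outward to $V_\lambda$. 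Once this is secured, the extender embedding $j_E$ is the desired witness and the contradiction is reached.
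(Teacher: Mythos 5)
Your direction (2)$\Rightarrow$(1) is correct and is the standard argument (the paper itself does not prove this proposition but defers to Kanamori; your first half matches that proof). The problem is in (1)$\Rightarrow$(2), and it sits exactly where you flagged it. The identification $A\cap V_\lambda=j(A)\cap V_\lambda$ is not just "an obstacle to be refined away": it is unobtainable from (1), and your proposed repair does not work. Statement (1) only guarantees that $\kappa$ is a closure point of $f$ and that $V_{j(f)(\kappa)}\subseteq M$; it gives agreement between $f$ and $j(f)$ only \emph{below} the critical point (where it is trivial), and no control whatsoever over $j(f)\restriction(\kappa,\lambda)$, hence none over $j(A)\cap V_\lambda$ for $\lambda>\kappa$, no matter how you fold a code of $A\cap V_\alpha$ into the ordinal $f(\alpha)$. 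There is likewise no "propagation" from the trivial agreement below $\kappa$ to agreement up to $\lambda$: $V_\lambda\subseteq M$ says every element of $V_\lambda$ lies in $M$, but says nothing about which of them lie in $j(A)$.

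The standard proof sidesteps this entirely by never trying to produce a $V$-embedding that is strong \emph{for $A$}; instead it produces, inside $M$, an embedding that is strong \emph{for $j(A)$}, and transfers the whole existential statement back by elementarity of $j$. Concretely: define $f(\alpha)$ to be an inaccessible (or at least $\beth$-fixed point) strictly above the least $\beta$ witnessing that $\alpha$ is not $\beta$-strong for $A$ --- this padding is essential, since it makes the derived $(\kappa,\lambda)$-extender $E$ for $\lambda<j(f)(\kappa)$ an \emph{element} of $M$ (it is coded in $V_{j(f)(\kappa)}\subseteq M$). One then checks that the ultrapower $i\colon M\to N=\ult(M,E)$ satisfies $V_\lambda\subseteq N$ and $i(j(A))\cap V_\lambda=j(A)\cap V_\lambda$; this agreement is automatic from the defining identity $a\in j(X)\leftrightarrow X\in E_a$ of the derived extender together with $j(A)\cap V_\kappa=A\cap V_\kappa$, and requires no comparison of $A$ with $j(A)$. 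Thus $M\models$ ``$\kappa$ is ${<}j(\delta)$-strong for $j(A)$'', contradicting the elementary image of the failure assumption (equivalently, contradicting the minimality built into $j(f)(\kappa)$ \emph{as computed in $M$} --- note also that your intended comparison $\lambda\geq f(\kappa)$ conflates $f(\kappa)$ with $j(f)(\kappa)$, which are least failure points for different sets in different models). This is precisely the template the paper executes for the strong compactness analogue in Theorem \ref{thm:wfsc-definable}, and I would rewrite your second half along those lines.
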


\begin{prop}\label{prop:defns-vopenka}
The following are equivalent for a cardinal $\delta$.
\begin{enumerate}
\item $\delta$ is Vop\v enka, i.e. for every function $f:\delta\to \delta$ there is $\kappa<\delta$ which is a closure point of $f$ and there is an elementary embedding $j:V\to M$ with $\crit(j)=\kappa$ and $^{j(f)(\kappa)}M\subseteq M$.
\item For every $A\subseteq V_\delta$, there is $\kappa<\delta$ which is $\lessd$-supercompact for $A$.
\end{enumerate}
\end{prop}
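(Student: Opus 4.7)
The plan is to mirror the proof of Proposition~\ref{prop:defns-woodin} (the Woodin equivalence), replacing every occurrence of strongness by supercompactness. The two directions split cleanly and can be treated in parallel.

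For $(1)\Rightarrow(2)$: Given $A\subseteq V_\delta$, I would encode ``failures of supercompactness-for-$A$'' into an auxiliary function $f\colon\delta\to\delta$. Concretely, let $f(\alpha)$ be the least $\beta<\delta$ such that some $\kappa\leq\alpha$ is not $\beta$-supercompact for $A$, and $0$ if no such $\beta$ exists. Apply (1) to obtain a closure point $\kappa<\delta$ of $f$ together with $j\colon V\to M$ having $\crit(j)=\kappa$ and ${}^{j(f)(\kappa)}M\subseteq M$. The closure condition on $M$ lets me extract from $j$ a normal fine ultrafilter on $\powerset_\kappa(j(f)(\kappa))$, whose ultrapower embedding is a genuine $j(f)(\kappa)$-supercompactness embedding. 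I would then need to check the coherence $A\cap V_{j(f)(\kappa)}=j(A)\cap V_{j(f)(\kappa)}$; this should be arranged by the very definition of $f$, since any level of disagreement would produce a failure that $f$ is already tracking. Finally, a reflection argument inside $M$ using the closure-point property of $\kappa$ rules out any $\beta<\delta$ witnessing failure of $\beta$-supercompactness-for-$A$ at $\kappa$, so $\kappa$ is $\lessd$-supercompact for $A$.

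For $(2)\Rightarrow(1)$: Given $f\colon\delta\to\delta$, apply (2) to $A=f$ (coded as a subset of $V_\delta$) to obtain $\kappa<\delta$ which is $\lessd$-supercompact for $A$. The set of closure points of $f$ is club in $\delta$; by standard tricks (replacing $f$ with $f^{*}(\alpha)=\sup\{f(\beta):\beta\leq\alpha\}+1$, and/or shifting $\kappa$ to the next closure point via elementarity) I may assume $\kappa$ itself is a closure point of $f$. Now fix any $\lambda<\delta$ with $\lambda>f(\kappa)$ and take a $\lambda$-supercompactness-for-$A$ embedding $j\colon V\to M$ with critical point $\kappa$. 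Since $A\cap V_\lambda=j(A)\cap V_\lambda$, we obtain $j(f)(\kappa)=f(\kappa)<\lambda$, and the closure ${}^{\lambda}M\subseteq M$ of a supercompactness embedding yields the desired ${}^{j(f)(\kappa)}M\subseteq M$.

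The main obstacle is the careful engineering in direction $(1)\Rightarrow(2)$: the auxiliary $f$ must simultaneously record the candidate cardinal and the level at which supercompactness-for-$A$ fails, while at the same time producing an embedding that coheres with $A$ on $V_{j(f)(\kappa)}$. The standard remedy is to bundle $A$ into the definition of $f$ via a suitable pairing (and perhaps replace $A$ by a subset of $V_\delta$ that also codes an enumeration of potential witnesses), so that elementarity of $j$ inside $M$ forces any reflected failure to appear below $\kappa$, contradicting closure of $\kappa$ under $f$. The closure-point adjustment in $(2)\Rightarrow(1)$ is standard and far less delicate.
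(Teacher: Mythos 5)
Your direction $(2)\Rightarrow(1)$ is essentially correct, and in fact simpler than you make it: applying $(2)$ to $A=f$ (as a set of pairs in $V_\delta$), the coherence $f\cap V_\lambda=j(f)\cap V_\lambda$ for suitable $\lambda$ already forces $f(\alpha)<\kappa$ for every $\alpha<\kappa$ (if $f(\alpha)\geq\kappa$, pick $\lambda>f(\alpha)$; then coherence gives $j(f)(\alpha)=f(\alpha)<\lambda$, while $j(f)(\alpha)=j(f(\alpha))\geq j(\kappa)>\lambda$), so $\kappa$ is automatically a closure point and no passage to $f^{*}$ or ``shifting'' is needed; the rest of that direction is fine. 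The genuine gap is in $(1)\Rightarrow(2)$, exactly at the step you flag as the main obstacle. The coherence $A\cap V_{j(f)(\kappa)}=j(A)\cap V_{j(f)(\kappa)}$ is \emph{not} arranged by the definition of $f$ and cannot be: $f$ is an ordinal-valued function recording only levels of failure, statement $(1)$ guarantees nothing about $j$ beyond the critical point, the closure point property and ${}^{j(f)(\kappa)}M\subseteq M$, and a disagreement between $A$ and $j(A)$ on $V_\lambda$ above $\kappa$ is not a ``failure of supercompactness for $A$'' that $f$ tracks. Bundling $A$ into $f$ does not help, because $(1)$ gives no agreement between $f$ and $j(f)$ above $\kappa$. (Separately, your $f$ as written, with ``some $\kappa\leq\alpha$'', is degenerate --- it should refer to $\alpha$ itself --- and its value must be padded up to an inaccessible above the least failure level, for the reason below.)

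The standard repair, which the paper itself spells out for the strong compactness analogue in the proof of $(2)\rightarrow(1)$ of Theorem \ref{thm:wfsc-definable}, is to abandon coherence with $A$ in $V$ and instead argue inside $M$: one shows $M\models$ ``$\kappa$ is ${<}j(\delta)$-supercompact for $j(A)$'' and pulls back by elementarity, concluding only that \emph{some} cardinal below $\delta$ is $\lessd$-supercompact for $A$ --- which is all $(2)$ asks. If this fails in $M$, the least failure level $\lambda$ lies below $j(f)(\kappa)$; the normal fine ultrafilter $U$ on $\powerset_\kappa\lambda$ derived from $j$ belongs to $M$ because ${}^{j(f)(\kappa)}M\subseteq M$ and $j(f)(\kappa)$ is inaccessible above $\lambda$ (this is why the padding matters); and the resulting ultrapower coheres with $j(A)$ --- not with $A$ --- on $V_\lambda$, since the factor map $k$ with $j=k\circ j_U$ has critical point above $\lambda$. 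Running this contradiction inside $M$ is the missing content of your ``reflection argument''; without it the proof of $(1)\Rightarrow(2)$ does not close. (Note that the paper does not prove this proposition itself --- it cites Kanamori --- but its Theorem \ref{thm:wfsc-definable} is the template to imitate.)
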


Concerning the preservation of Woodin and Vop\v enka cardinals in forcing extensions, we will use the following results, which we state without proofs. The first can be found in \cite{brooke-taylor-indestructibility} and the second follows from folklore results (details can be found in \cite{woodin-cardinals-forcing}).

\begin{thm}[\cite{brooke-taylor-indestructibility}]\label{vopenka-ind}
Suppose $\delta$ is a Vop\v enka cardinal and $\p=\langle \p_\alpha,\dotq_\beta\mid \alpha\leq \delta,\beta<\delta\rangle$ is an Easton support $\delta$-iteration with the following properties:
\begin{enumerate}
	\item For all $\alpha<\delta$, $|\dotq_\alpha|<\delta$.
	\item For all $\alpha<\delta$, there is $\beta<\delta$ such that for all $\gamma\geq\beta$, $\Vdash_{\p_\gamma} \dotq_\gamma$ is $\alpha$-directed closed.
\end{enumerate}
Then $\delta$ remains Vop\v enka after forcing with $\p$.
\end{thm}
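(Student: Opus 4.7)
The plan is to verify characterization~(2) of Proposition~\ref{prop:defns-vopenka} in $V[G]$, where $G$ is $\p$-generic over $V$. Fix $A\in V[G]$ with $A\subseteq V_\delta^{V[G]}$; the goal is to produce $\kappa<\delta$ which is $\lessd$-supercompact for $A$ in $V[G]$. First, observe that $\p$ is $\delta$-cc: $\delta$ is inaccessible (being Vop\v enka), and an Easton support iteration of stages of size ${<}\delta$ with regular limit $\delta$ is $\delta$-cc. Hence $V_\delta^{V[G]}=V_\delta[G]$ and $A$ admits a name $\dot A\in V_\delta$. I would then encode $\dot A$, the iteration $\p$, and the ``closure function'' $F\colon\alpha\mapsto\beta(\alpha)$ supplied by hypothesis~(2) into a single set $B\subseteq V_\delta$ in $V$. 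Applying the Vop\v enka property of $\delta$ in $V$ to $B$ yields $\kappa<\delta$ that is $\lessd$-supercompact for $B$; by the coding, $\kappa$ is a closure point of $F$.

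Now fix $\lambda\in[\kappa,\delta)$ and let $j\colon V\to M$ be the $\lambda$-supercompactness-for-$B$ embedding, so $\crit(j)=\kappa$, $j(\kappa)>\lambda$, ${}^\lambda M\subseteq M$, and $B\cap V_\lambda=j(B)\cap V_\lambda$. I would apply a standard Laver-style lift: $j(\p)$ factors in $M$ as $\p*\dot{\p}^{\mathrm{tail}}$, so take $G$ itself for the first factor and construct in $V[G]$ an $M[G]$-generic filter $h$ for $\p^{\mathrm{tail}}$. The lift $j^*\colon V[G]\to M[G*h]$ defined by $j^*(\sigma_G):=j(\sigma)_{G*h}$ has $\crit(j^*)=\kappa$ and inherits ${}^\lambda M[G*h]\subseteq M[G*h]$ in $V[G]$ from the closure of $M$ and of $\p^{\mathrm{tail}}$. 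The agreement $B\cap V_\lambda=j(B)\cap V_\lambda$ together with the coding of $\dot A$ inside $B$ gives $A\cap V_\lambda^{V[G]}=j^*(A)\cap V_\lambda^{V[G]}$. Factoring $j^*$ through the normal $V[G]$-ultrafilter on $\powerset_\kappa\lambda$ which it induces produces a genuine $\lambda$-supercompactness-for-$A$ embedding in $V[G]$, which is what characterization~(2) demands.

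The main obstacle is the construction of $h$. By elementarity applied to hypothesis~(2) and the coding of $F$, the stages of $j(\p)$ beyond a suitable bound below $j(\delta)$ are $\lambda^+$-directed closed in $M[G]$, so $\p^{\mathrm{tail}}$ is $\lambda^+$-directed closed in $M[G]$ above a short initial segment; that initial segment (if it straddles $\delta$) is absorbed into the part of $G$ below $\delta$. The pointwise image $j``G$ is a directed subset of $j(\p)$ of size at most $\lambda$ lying in $M[G]$ (using ${}^\lambda M\subseteq M$ to capture the enumeration), so it admits a master condition $p^*$ in the closed portion of $\p^{\mathrm{tail}}$. Diagonalising below $p^*$ against the at most $\lambda^+$ many dense subsets of $\p^{\mathrm{tail}}$ lying in $M[G]$ yields the required $h\in V[G]$. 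The delicate part will be carrying out this enumeration/diagonalisation cleanly, checking that $M[G*h]$ retains the necessary closure in $V[G]$, and confirming that all coherence conditions transfer to $j^*$ so that the factored ultrapower is a genuine supercompactness embedding.
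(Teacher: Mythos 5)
The paper states this theorem without proof, citing Brooke-Taylor; the closest in-paper analogue of the required argument is the lifting of a $\lambda$-supercompactness embedding through a $\delta$-iteration in the proof of Theorem \ref{thm:first-vopenka}, so I will measure your proposal against that. Your opening moves are fine: $\p$ is $\delta$-c.c.\ (using that $\delta$ is Mahlo), coding a name for $A$, the iteration, and the closure function into a single $B\subseteq V_\delta$ and extracting a closure point $\kappa$ which is $\lessd$-supercompact for $B$ is exactly the right start.

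The gap is in the lifting architecture. You factor $j(\p)$ as $\p\ast\dot\p^{\mathrm{tail}}$ and plug in $G$ for the first factor, but a $\lambda$-supercompactness-for-$B$ embedding only guarantees agreement $j(\p)\cap V_\lambda=\p\cap V_\lambda$: the stages of $j(\p)$ in the interval $[\lambda,\delta)$ are computed in $M$ from the $j$-image of the iteration and need not coincide with those of $\p$, so $G$ is not generic for an initial segment of $j(\p)$ of length $\delta$. Even where the two iterations agree, $j$ moves conditions: for $p\in G$ with support meeting $[\kappa,\delta)$, $\supp(j(p))=j(\supp(p))$ is scattered throughout $[\kappa,j(\delta))$, so Silver's criterion $j``G\subseteq G\ast h$ must be arranged by a master condition. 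But $j``G$ has size $|\p|=\delta>\lambda$, so it is not an element of the merely $\lambda$-closed model $M$ (your claim that it has size at most $\lambda$ is false), and no master condition above all of it exists. The diagonalisation count is also wrong: the part of $j(\p)$ above $\delta$ has size $j(\delta)$ in $M$ and about $j(\delta^+)$-many maximal antichains there, i.e.\ $\delta^+$-many as counted in $V[G]$ under GCH, not $\lambda^+$-many, and since ${}^{\delta}M\not\subseteq M$, Proposition \ref{thm:diagonalisation} does not apply. The standard repair, as in the proof of Theorem \ref{thm:first-vopenka}, is the three-part decomposition $\p=\p_\kappa\ast\dotp_{\kl}\ast\dotp_{>\lambda}$: diagonalise against the tail of $j(\p_\kappa)$ between $\lambda$ and $j(\kappa)$ (where the antichain count really is $\lambda^+$), build a master condition only for $j``G_{\kl}$ (which genuinely has size $\le\lambda$), and handle $\p_{>\lambda}$ not by a master condition but by transferring $G_{>\lambda}$ via Proposition \ref{thm:transering-generic}, using that $j$ has width $\le\lambda$ and that the stages above $\lambda$ are sufficiently distributive by hypothesis (2). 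As written, your argument does not go through.
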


\begin{thm}\label{thm:woodin-indesructibility}
Suppose $\delta$ is a Woodin cardinal, GCH holds and $\p$ is an Easton support $\delta$-iteration which satisfies:
\begin{enumerate}
	\item $\p\subseteq V_\delta$,
	\item for each $A\subseteq V_\delta$ there is a $\lessd$-strong for $A$ cardinal $\kappa<\delta$ such that $\p_\kappa\subseteq V_\kappa$ and all stages of $\p$ greater or equal to $\kappa$ are forced to be at least $\kappa^+$-strategically closed.
\end{enumerate}
Then $\delta$ remains Woodin after forcing with $\p$.
\end{thm}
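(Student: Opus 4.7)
The plan is to verify characterisation (2) of Proposition~\ref{prop:defns-woodin} in $V[G]$. Let $G$ be $\p$-generic over $V$ and let $\dot A$ be a $\p$-name forced to be a subset of $V_\delta$. By standard nice-name arguments I may take $\dot A\subseteq V_\delta$ as a set in $V$. Applying Woodinness in $V$ together with hypothesis (2) to $\dot A$ produces a cardinal $\kappa<\delta$ which is $\lessd$-strong for $\dot A$ and satisfies $\p_\kappa\subseteq V_\kappa$, with every stage of $\p$ of index $\geq\kappa$ forced to be $\kappa^+$-strategically closed. I will show that this $\kappa$ remains $\lessd$-strong for $\dot A^G$ in $V[G]$, which suffices.

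Fix $\lambda<\delta$; without loss of generality I take $\lambda$ to be a strong limit of cofinality greater than $\kappa$. Using Proposition~\ref{prop:closure-extender}, I choose a $\lambda$-strongness-for-$\dot A$ embedding $j\colon V\to M$ with $\crit(j)=\kappa$, $j(\kappa)>\lambda$, $V_\lambda\subseteq M$, $M^\kappa\subseteq M$, and $j(\dot A)\cap V_\lambda=\dot A\cap V_\lambda$. I then decompose $\p=\p_\kappa*\dotp_{\mathrm{tail}}$ with $G=G_\kappa*G^{\mathrm{tail}}$, and $j(\p)=\p_\kappa*\dotp^{*}_{\mathrm{tail}}$ in $M$; the agreement of $\p$ and $j(\p)$ below $\kappa$ follows from $\p_\kappa\subseteq V_\kappa$ and $\crit(j)=\kappa$. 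Lifting through the first factor is automatic and gives $j\colon V[G_\kappa]\to M[G_\kappa]$.

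The heart of the argument is constructing, inside $V[G]$, a filter $H$ that is $\dotp^{*}_{\mathrm{tail}}[G_\kappa]$-generic over $M[G_\kappa]$ and contains $j``G^{\mathrm{tail}}$. Two ingredients will drive this. First, by elementarity applied to hypothesis (2) in $M$, the iteration $\dotp^{*}_{\mathrm{tail}}[G_\kappa]$ will be strategically closed to a high degree above $\kappa$. Second, under GCH the extender representation $M=\{j(f)(a):a\in[\lambda]^{<\omega},\,f\in V\}$ combined with $M^\kappa\subseteq M$ bounds the cardinality of the family of dense subsets of $\dotp^{*}_{\mathrm{tail}}[G_\kappa]$ that lie in $M[G_\kappa]$, making a diagonalisation feasible. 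I would enumerate these dense sets and build $H$ stage-by-stage: at each successor meeting the next dense set via a stronger condition, at each limit applying the strategic-closure strategy, and using the directedness of $j``G^{\mathrm{tail}}$ (inherited from $G^{\mathrm{tail}}$ being a filter) as the initial master-condition seed. Then $j$ lifts to $j^{*}\colon V[G]\to M[G_\kappa][H]$ by $\dot\tau^G\mapsto j(\dot\tau)^{G_\kappa*H}$, and the required $V_\lambda^{V[G]}\subseteq M[G_\kappa][H]$ and the coherence $j^{*}(\dot A^G)\cap V_\lambda^{V[G]}=\dot A^G\cap V_\lambda^{V[G]}$ follow from the ground-model analogues together with $\crit(j)=\kappa$ being above the relevant ranks.

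The main obstacle I anticipate is the combinatorial verification needed to build $H$: one must count the dense subsets of $\dotp^{*}_{\mathrm{tail}}[G_\kappa]$ in $M[G_\kappa]$ carefully using GCH and the extender presentation of $M$, and confirm that the available strategic closure is strong enough, in particular at least $\kappa^+$ on the segment $[\kappa,j(\kappa))$ of the iteration, where direct elementarity only provides $j(\kappa)^+$-closure above stage $j(\kappa)$. Once these estimates and the absorption of $j``G^{\mathrm{tail}}$ as a master-condition seed are in place, the remainder of the lifting argument and the verification of strength-for-$\dot A^G$ will be routine bookkeeping of the sort familiar from the Woodin-preservation literature.
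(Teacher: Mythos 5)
The paper states this theorem without proof (it is attributed to folklore), but the argument it relies on is essentially the one carried out in Claim~\ref{claim:strong-v}, and comparing your plan against that template exposes a genuine structural gap. Your decomposition $j(\p)=\p_\kappa\ast\dotp^{*}_{\mathrm{tail}}$, with a single diagonalisation producing a generic $H$ for the \emph{entire} tail of $j(\p)$ above $\kappa$, cannot deliver a $\lambda$-strongness embedding in $V[G]$. Since $\crit(j)=\kappa$ and $j(\kappa)>\lambda$, every $p\in G^{\mathrm{tail}}$ has $\supp(j(p))\subseteq(j(\kappa),j(\delta))$, so the seed $j``G^{\mathrm{tail}}$ places no constraint whatsoever on the stages of $j(\p)$ in the interval $(\kappa,\lambda)$; the diagonalisation will therefore produce its own generic there, different from $G_{(\kappa,\lambda)}$, and the target model $M[G_\kappa][H]$ will not contain $G_\lambda$, hence not $V_\lambda^{V[G]}$, and the coherence $j^{*}(\dot A^G)\cap V_\lambda=\dot A^G\cap V_\lambda$ fails as well. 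Independently, the diagonalisation itself is not feasible on the whole tail: Proposition~\ref{thm:diagonalisation} requires at most $\kappa^{+}$-many maximal antichains, but the tail of $j(\p)$ has length $j(\delta)$ and far more than $\kappa^{+}$-many antichains in $M[G_\kappa]$; and there is no single master condition below the $\delta$-many conditions in $j``G^{\mathrm{tail}}$ under mere $\kappa^{+}$-strategic closure.

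The repair is the three-part factorisation used in Claim~\ref{claim:strong-v}. First apply hypothesis~(2) to a set coding both $\dot A$ \emph{and} $\p$ (as in Proposition~\ref{prop:wfsc-second-order}), so that $\kappa$ is $\lessd$-strong for $\p$ and hence $j(\p)\cap V_\lambda=\p\cap V_\lambda$ for suitably chosen $\lambda$ with $\p_\lambda\subseteq V_\lambda$; this is what licenses writing $j(\p_\kappa)=\p_\lambda\ast\dotp_{\mathrm{tail}}$ and, crucially, using the \emph{actual} $V$-generic $G_\lambda$ for the first $\lambda$ stages, which is what puts $V_\lambda^{V[G]}$ into the target model. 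The diagonalisation is then applied only to the segment $[\lambda,j(\kappa))$, where the hull $X=\{j(f)(\kappa,\lambda)\mid f\colon[\kappa]^{2}\to V\}$ together with GCH bounds the antichains by $\kappa^{+}$ and the intersection trick upgrades $X[G_\lambda]$-genericity to $M[G_\lambda]$-genericity. Finally, the part of $j(\p)$ above $j(\kappa)$ is handled not by diagonalisation but by Proposition~\ref{thm:transering-generic}: since $j$ has width $\leq\kappa$ and $\p_{>\kappa}$ is $\kappa^{+}$-distributive, the filter \emph{generated} by $j``G_{>\kappa}$ is already $M[j(G_\kappa)]$-generic, with no master condition needed. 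Your closing worry about strategic closure on $[\kappa,j(\kappa))$ is legitimate but is resolved by the same device: on $[\kappa,\lambda)$ the stages of $j(\p)$ coincide with those of $\p$, and on $[\lambda,j(\kappa))$ one uses that the relevant closure statement reflects to $M$ because $V_\lambda\subseteq M$ and $\kappa$ is strong for the appropriate predicate.
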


We will use the notion of \emph{width} of an embedding, found in \cite{cummings-handbook}.

\begin{defn}
An elementary embedding $j:V\to M$ is said to have \emph{width} $\leq\lambda$ for some ordinal $\lambda$, if every $x\in M$ can be written in the form $j(f)(a)$, for some set $a\in M$ and some function $f\in V$ with $|\dom(f)|\leq \lambda$.
\end{defn}

For instance, if $j$ is a (short) extender embedding with $\crit(j)=\kappa$, then it has width $\leq\kappa$. Also, if $j$ is the ultrapower embedding by an ultrafilter on $\powerset_\kappa\lambda$ for $\lambda\geq\kappa=\crit(j)$, then it has width $\leq\lambda^{<\kappa}$.

Concerning our notation on forcing, we follow closely \cite{cummings-handbook}. In particular, by $q\leq p$ we mean that $q$ is stronger than $p$ and by $\kappa$-distributive, we mean that the intersection of ${<}\kappa$-many dense open sets is open dense. For a forcing notion $\p$ we can define a game $G_\alpha(\p)$ of $\alpha$ many moves, where a player ODD playing at odd stages and a player EVEN playing at even stages, choose stronger and stronger conditions, with EVEN always starting with the trivial condition at $0$-stage. A forcing notion $\p$ is called \emph{$\kappa$-strategically closed} if player EVEN has a winning strategy in the game $G_\kappa(\p)$ and ${<}\kappa$-strategically closed if it is $\alpha$-strategically closed for all $\alpha<\kappa$.

We describe now two of the forcing notions that we will use. The first is the forcing which shoots a club of non-strong cardinals below an inaccessible cardinal $\kappa$. The poset we use is $\p=\{p\mid p$ is a closed bounded subset of $\kappa$, consisting of cardinals which are not ${<}\kappa$-strong$\}$, ordered by end-extension. It is easy to see that a generic filter for $\p$ induces a club subset of $\kappa$ consisting of cardinals which are not ${<}\kappa$-strong and that $\p$ is ${<}\kappa$-strategically closed and thus, $\kappa$-distributive. Moreover, it is $\kappa^+$-c.c and so, no cardinals are collapsed after forcing with $\p$. Note that if $\kappa$ is Woodin, then forcing with $\p$ destroys its Woodinness.

The second forcing we use is adding a non-reflecting stationary set at some given inaccessible cardinal $\kappa$, using cardinals of cofinality equal to some fixed regular $\lambda<\kappa$. We use the poset $\p$ whose conditions are functions $p:\alpha\to 2$, where $\alpha<\lambda$ and $p$ is the characteristic function of a (bounded) subset of $\kappa$, consisting of ordinals of cofinality $\lambda$, which is not stationary at its supremum and neither has any initial segment stationary at its supremum. The order is end-extension. Standard arguments show that $\p$ is $\kappa$-strategically closed and $\lambda$-directed closed. It is also $\kappa^+$-c.c., so it does not collapse any cardinals.

In our results we use Silver's criterion along with standard arguments to lift elementary embeddings through forcing. We mention here the two main techniques used in constructing the required generic filters, which can be found in \cite{cummings-handbook} or \cite{hamkins-largecardinals}.

\begin{prop}[Diagonalisation]\label{thm:diagonalisation}
Suppose $M\subseteq V$ is an inner model, $\p\in M$ is a forcing notion and $p\in \p$. If
\begin{enumerate}
\item ${}^\kappa M\subseteq M$
\item $\p$ is ${<}\kappa^+$-strategically closed in $M$
\item there are at most $\kappa^+$-many maximal antichains of $\p$ in $M$, counted in $V$,
\end{enumerate}
then there is in $V$, an $M$-generic filter $H\subseteq \p$ such that $p\in H$.
\end{prop}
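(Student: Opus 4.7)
The plan is to build, inside $V$, a descending $\kappa^+$-sequence of conditions $\langle p_\alpha : \alpha < \kappa^+ \rangle$ of $\p$ with $p_0 \leq p$, diagonalising against the $M$-maximal antichains of $\p$; the upward closure
\[
H = \{q \in \p : \exists\,\alpha < \kappa^+\ p_\alpha \leq q\}
\]
will then be the desired $M$-generic filter through $p$. By hypothesis (3) we fix, in $V$, an enumeration $\langle A_\alpha : \alpha < \kappa^+ \rangle$ of the maximal antichains of $\p$ lying in $M$.

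At successor stages the move is essentially trivial: given $p_\alpha$, the maximality of $A_\alpha$ produces some element compatible with $p_\alpha$, and any common extension is taken as $p_{\alpha+1}$. The substantive work is at limit stages $\alpha < \kappa^+$, where a lower bound in $\p$ of $\langle p_\beta : \beta < \alpha \rangle$ is needed. Since $|\alpha| \leq \kappa$, hypothesis (1) places this initial segment inside $M$; hypothesis (2) then supplies, inside $M$, EVEN's winning strategy for $G_\alpha(\p)$. The idea is to arrange the whole construction as a single play of the closure game, with ODD's moves being the antichain-meeting refinements and EVEN following a coherent family of strategies, so that EVEN's response at each limit stage furnishes the lower bound inside $M$.

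The main technical obstacle is the coherent threading of strategies across successive limits: since ${<}\kappa^+$-strategic closure provides only a separate winning strategy for each game length $\beta < \kappa^+$, one has to fix in $M$ in advance a family of such strategies and then interlace them with ODD's antichain-meeting moves so that each initial segment of the construction is a legitimate partial play under the appropriate strategy from the family. Once the sequence has been produced, verifying that $H$ is an $M$-generic filter containing $p$ is immediate: it is upward closed and directed because the $p_\alpha$ are descending, it contains $p$ because $p_0 \leq p$, and for any maximal antichain $A = A_\alpha$ in $M$ the condition $p_{\alpha+1}$ extends some member of $A$ by construction.
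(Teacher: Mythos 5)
Your outline is the standard diagonalisation template (the paper itself gives no proof of this proposition, citing Cummings and Hamkins instead), and most of it is fine: the successor steps, the use of hypothesis (1) to place each initial segment $\langle p_\beta:\beta<\alpha\rangle$ inside $M$ so that EVEN's $M$-strategy can be consulted, and the final verification that $H$ is an $M$-generic filter containing $p$. The problem is that the one step you yourself single out as ``the main technical obstacle'' --- producing a lower bound at each limit $\alpha<\kappa^+$ --- is never actually carried out. You propose to ``fix in $M$ in advance a family of such strategies and then interlace them \dots\ so that each initial segment of the construction is a legitimate partial play under the appropriate strategy from the family'', but that is precisely the assertion requiring proof, and it is not automatic: a winning strategy for $G_\beta(\p)$ and one for $G_{\beta'}(\p)$ with $\beta<\beta'$ need not agree on positions of length less than $\beta$, so a sequence built by following $\sigma_\beta$ through the limit $\beta$ need not be a position reachable under $\sigma_{\beta'}$, and EVEN then has no guaranteed response at the next limit. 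The inverse system of winning strategies under restriction need not admit a thread through all $\delta<\kappa^+$, so ``the appropriate strategy from the family'' is not well defined and the induction can break at the second limit stage it encounters.

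The clean way to close the gap is to run the entire construction as a single play of the decreasing-sequence game in which EVEN follows one fixed strategy $\sigma\in M$ that is winning in $G_\delta(\p)$ for every $\delta<\kappa^+$ simultaneously: then EVEN can never get stuck at a limit $\delta<\kappa^+$, since getting stuck there would contradict $\sigma$ (restricted) being winning in $G_{\delta+1}(\p)$. The existence of such a uniform $\sigma$ is exactly what a winning strategy in the length-$\kappa^+$ game provides, and it is what every application in the paper actually verifies (the relevant tail forcings are ``much more than'' $\kappa^+$-strategically closed, being iterations of increasingly directed closed posets, so genuine lower bounds exist at the limits in question). As written, your argument records the obstacle without overcoming it, so the limit case is a genuine gap; you should either exhibit the uniform strategy or strengthen the closure hypothesis you invoke to one that supplies it.
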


\begin{prop}[Transferring]\label{thm:transering-generic}
Suppose $j:V\to M$ is an elementary embedding with width $\leq\lambda$ and let $\p$ be a $\lambda^+$-distributive forcing notion. If $G\subseteq \p$ is a $V$-generic filter, then the filter $H$ generated by $j``G$ is $M$-generic for $j(\p)$. 
\end{prop}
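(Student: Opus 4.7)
The plan is to verify the defining condition of $M$-genericity directly: every maximal antichain of $j(\p)$ lying in $M$ must be met by $H$. So I would fix such an antichain $A\in M$ and use the width hypothesis to write $A=j(F)(a)$ for some $F\in V$ with $|\dom(F)|\leq\lambda$ and some $a\in M$. A preliminary normalisation step replaces $F$ by a function (still in $V$, same domain) for which every value $F(x)$ is a maximal antichain of $\p$: simply set $F(x):=\{1_\p\}$ whenever it fails to be one. Elementarity guarantees that this modification does not alter $j(F)(a)$, since $a$ lies in the ``good'' part of $\dom(j(F))$ by virtue of $A=j(F)(a)$ being a maximal antichain of $j(\p)$ in $M$.

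Next I would exploit distributivity. For each $x\in\dom(F)$, the set
\[
D_x=\{p\in\p\mid \exists q\in F(x),\ p\leq q\}
\]
is dense open in $\p$, and there are at most $\lambda$ many such sets. Since $\p$ is $\lambda^+$-distributive in $V$, the intersection $D=\bigcap_{x\in\dom(F)}D_x$ is still dense open. By $V$-genericity of $G$, I pick some $p\in G\cap D$.

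Using choice in $V$, I define $g\in V$ with $\dom(g)=\dom(F)$ by letting $g(x)\in F(x)$ be any condition with $p\leq g(x)$. Applying $j$ and evaluating at $a$ yields, by elementarity, both $j(g)(a)\in j(F)(a)=A$ and $j(p)\leq j(g)(a)$. Since $j(p)\in j``G$ and $H$ is the upward closure of $j``G$ in $j(\p)$, this gives $j(g)(a)\in H\cap A$, as required.

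The only subtle point I anticipate is the normalisation of $F$; everything else is a routine combination of distributivity with the width bound on $j$. That subtlety dissolves quickly once one observes that we may freely modify $F$ on ``bad'' inputs without changing $j(F)(a)$, precisely because $A$ is already known to be a maximal antichain in $M$.
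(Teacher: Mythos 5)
Your proof is correct and is essentially the standard transfer argument that the paper itself invokes by citation to Cummings' handbook chapter (the paper states this proposition without proof): represent the antichain as $j(F)(a)$ via the width bound, normalise $F$ to take maximal-antichain values, intersect the $\leq\lambda$ corresponding dense open sets using $\lambda^+$-distributivity, and push a single condition of $G$ through $j$. The normalisation step you flag is handled correctly, since elementarity ensures $j(F)(a)$ is unchanged on the inputs where $j(F)$ already yields an $M$-maximal antichain.
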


When forcing in the presence of large cardinals, it is many times useful to know that no new large cardinals are created. In \cite{hamkins-approximation}, Hamkins showed how such arguments work when a forcing iteration has low enough closure points. We write the definition of closure points and a summary of the results of \cite{hamkins-approximation} that we need in this article.

\begin{defn}
A forcing notion has a \emph{closure point} at $\alpha$ if it can be factorised as $\p\ast \dotq$, where $|\p|\leq \alpha$ and $\Vdash_\p \dotq$ is $(\alpha+1)$-strategically closed.
\end{defn}

\begin{thm}[\cite{hamkins-approximation}] \label{thm:cov-approx}
If $V\subseteq V[G]$ is a set forcing extension with closure point at $\alpha$ and $j : V [G] \to \bar{N}$ is a definable embedding in $V [G]$ with $V[G]\models {}^{\alpha}\bar{N}\subseteq \bar{N}$ and $\alpha<\crit(j)$, then the restriction $j\restriction V:V\to N$, where $N=\bar{N}\cap V$, is an elementary embedding, definable in $V$. Furthermore,
\begin{enumerate}
\item if $V_\lambda\subseteq \bar{N}$ for some $\lambda$, then $V_\lambda\subseteq N$;
\item if $V[G]\models {}^\lambda\bar{N}\subseteq \bar{N}$ for some $\lambda$, then $V\models {}^\lambda N\subseteq N$;
\item if $j$ is $\lambda$-strongly compact for some $\lambda$ and $V\subseteq V[G]$ satisfy the $\kappa$-covering property, i.e. for every set $s\in V[G]$ with $|s|^{V[G]}<\kappa$ and $s\subseteq V$ there is $s'\in V$ with $s\subseteq s'$ and $|s|^V<\kappa$, then $j\restriction V$ is also $\lambda$-strongly compact.
\end{enumerate}
\end{thm}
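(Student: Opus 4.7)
The plan is to invoke Hamkins's approximation-and-cover framework directly. First, use the closure point hypothesis to factor $V[G]$ as $V[G_0][G_1]$, where $G_0$ is $V$-generic for some $\p$ with $|\p|\leq\alpha$ and $G_1$ is generic for an $(\alpha+1)$-strategically closed forcing in $V[G_0]$. By Hamkins's key lemma this yields the $\alpha^+$-approximation and $\alpha^+$-cover properties for the pair $(V,V[G])$, and the Laver--Hamkins--Woodin ground model theorem then provides a formula defining $V$ inside $V[G]$ from a parameter $p\in V$ of hereditary size bounded in terms of $|\p|$. Because $\alpha<\crit(j)$ and $|\p|\leq\alpha$, the parameter $p$ is moved by $j$ precisely to the parameter defining the analogous ground model of $\bar N$, and one verifies that this analogous ground model coincides with $N=\bar N\cap V$.

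From this, elementarity of $j\restriction V:V\to N$ follows routinely: for any formula $\varphi$ and $x\in V$, the assertion $V\models\varphi(x)$ is expressible in $V[G]$ via the defining formula for $V$ at parameter $p$, and its $j$-image in $\bar N$ is the assertion $N\models\varphi(j(x))$. Definability of $j\restriction V$ as a class of $V$ is obtained by the standard argument: $j\restriction V$ is first seen to be definable in $V[G]$ from $p$ and the definition of $j$, and then the $\alpha^+$-approximation property together with the closure ${}^\alpha\bar N\subseteq\bar N$ forces it to lie in $V$, via checking small approximations $j(x)\cap y$ for $y\in V$ of $V$-size at most $\alpha$.

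For the preservation clauses, item (1) is immediate, since $V_\lambda=V_\lambda^V\subseteq\bar N$ yields $V_\lambda\subseteq\bar N\cap V=N$. Item (2) follows at once: a $\lambda$-sequence of elements of $N$ lying in $V$ is, a fortiori, such a sequence in $V[G]$ whose range lies in $\bar N$, hence by hypothesis the sequence itself lies in $\bar N$ and therefore in $\bar N\cap V=N$. For item (3), given $X\in V$ with $|X|^V\leq\lambda$, apply the $\lambda$-strong compactness of $j$ in $V[G]$ to obtain $s\in\bar N$ with $j``X\subseteq s$ and $|s|^{\bar N}<j(\crit(j))$; then use the assumed $\kappa$-covering property between $V$ and $V[G]$ to cover $s$ by some $s'\in V$ of size less than $j(\crit(j))$ in $V$, observing that $s'\in\bar N\cap V=N$ and $j``X\subseteq s'$, which witnesses the weak $\lambda$-covering property for $j\restriction V$.

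The main obstacle is the first paragraph: showing that $j$ actually maps $V$ into $V$, and that the restriction is a definable elementary embedding of $V$. This hinges entirely on the Hamkins--Laver--Woodin ground model definability, whose nontrivial content is that the witnessing parameter lies low enough relative to $\crit(j)$ to be tracked correctly by $j$; the hypothesis that there is a closure point at $\alpha<\crit(j)$ is exactly what makes this work.
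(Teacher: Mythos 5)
Note first that the paper offers no proof of this statement: it is quoted from \cite{hamkins-approximation} as a black box, so your outline can only be measured against Hamkins's actual argument. Your opening move --- factoring the extension at the closure point and deducing the $\alpha^+$-approximation and cover properties for $(V,V[G])$ --- is the correct first step, and items (1) and (2) are handled correctly. The genuine gap is the sentence ``one verifies that this analogous ground model coincides with $N=\bar N\cap V$.'' That verification is not an afterthought; it is essentially the whole theorem. To know that the class defined in $\bar N$ from $j(p)$ equals $\bar N\cap V$, you must already know that $j(x)\in V$ for every $x\in V$, that $\bar N\cap V$ is an inner model of $\bar N$ satisfying the approximation and cover properties over $\bar N$, and that its relevant initial segment is $j(p)$ --- none of which follows from ground-model definability, which is itself proved from the same machinery. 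Hamkins's proof instead proceeds by a rank induction: first one shows $j\restriction b\in V$ for every $b\in V$ with $|b|^V\leq\alpha$, using ${}^{\alpha}\bar N\subseteq\bar N$ together with the cover property, and then the approximation property yields $j\restriction A\in V$ for arbitrary $A\in V$; only from this does one obtain $j``V\subseteq V$, the identification of $N$, elementarity, and amenable definability. Your definability paragraph gestures at this (``checking small approximations $j(x)\cap y$''), but in the wrong logical order: the approximation property can only be applied once the small approximations are already known to lie in $V$, and that is precisely the inductive content you have omitted.

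Item (3) also contains a concrete misstep. The $\kappa$-covering property of $V\subseteq V[G]$ covers sets of $V[G]$-cardinality less than $\kappa$ by sets in $V$ of size less than $\kappa$; it cannot be applied directly to a set $s\in\bar N$ of $\bar N$-cardinality less than $j(\kappa)$ to produce a cover of size less than $j(\kappa)$. What is needed is the $j(\kappa)$-covering property of the pair $(N,\bar N)$, obtained by applying $j$ to the statement that $(V,V[G])$ has the $\kappa$-covering property --- which again presupposes the main part of the theorem, namely that $j$ carries the pair $(V,V[G])$ to the pair $(N,\bar N)$. With that in hand, one covers $s\cap j(\lambda)$ by some $s'\in N$ with $|s'|^N<j(\kappa)$ and observes $j``\lambda\subseteq s'$, which is the intended argument.
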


Finally, the following fact can be found in \cite{cummings-handbook}.

\begin{prop}\label{prop:lift-width}
Suppose $j^+:V[G]\to M[H]$ is the lift of an embedding $j:V\to M$, such that $j$ has width $\leq\lambda$. Then $j^+$ also has width $\leq\lambda$.
\end{prop}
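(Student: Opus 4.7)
The plan is to chase names. Given $y \in M[H]$, I would first fix a $j(\p)$-name $\tau \in M$ with $\tau_H = y$. Since $j$ has width $\leq \lambda$, I can write $\tau = j(f)(a)$ for some $a \in M$ and some $f \in V$ with $|\dom(f)| \leq \lambda$. A small preparatory step is needed: I would like $\ran(f)$ to consist of $\p$-names in $V$. This can be arranged by replacing $f(x)$ with $\emptyset$ wherever it is not a $\p$-name; since $\tau = j(f)(a)$ is a $j(\p)$-name, elementarity guarantees that this modification does not change the value $j(f)(a)$.

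The idea is then to transfer the representation through the lift by evaluating names. Working in $V[G]$, I define $g$ with $\dom(g) = \dom(f)$ by $g(x) = f(x)_G$. Clearly $g \in V[G]$, and $|\dom(g)|^{V[G]} \leq \lambda$ because forcing cannot increase the cardinality of a ground-model set. Applying $j^+$ to the defining property of $g$, and using that $j^+$ extends $j$ with $j^+(G) = H$, I obtain in $M[H]$ that $j^+(g)$ is the function on $j(\dom(f))$ given by $j^+(g)(x) = j(f)(x)_H$. Evaluating at $x = a$ yields $j^+(g)(a) = \tau_H = y$, which exhibits $y$ in the form required by the width condition for $j^+$.

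The argument is essentially a one-line diagram chase, so I do not anticipate a serious obstacle. The only points that require mild care are the preliminary cleanup ensuring $f$ takes values in $\p$-names (so that the elementary statement about $g$ makes sense and transfers back) and the observation that a cardinality bound in $V$ on $\dom(f)$ is preserved in $V[G]$. Both are routine.
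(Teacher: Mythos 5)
Your argument is correct and is exactly the standard name-chasing proof: the paper itself states this proposition without proof, citing Cummings' handbook chapter, where the argument is precisely the one you give (clean up $f$ to take $\p$-name values, set $g(x)=f(x)_G$, and use $j^+(G)=H$ to get $j^+(g)(a)=\tau_H=y$). Both of the "mild care" points you flag are handled correctly, so there is nothing to add.
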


\section{Woodin for strong compactness cardinals}

We define now the main concept of this article.

\begin{defn}
A cardinal $\delta$ is called \emph{Woodin for strong compactness} or \emph{Woodinised strongly compact} if for every $A\subseteq \delta$ there is $\kappa<\delta$ which is $\lessd$-strongly compact for $A$.
\end{defn}

The definition is obtained by replacing the strongness or supercompactness clause in (2) of \ref{prop:defns-woodin} or \ref{prop:defns-vopenka}, by a strong compactness clause. In this section, we will see that Woodinised strong compactness is a reasonable Woodin analogue. Firstly, we show that the definition implies inaccessibility.

\begin{prop}\label{prop:wfsc-inaccessible}
If $\delta$ is Woodin for strong compactness, then it is an inaccessible limit of $\lessd$-strongly compact cardinals.
\end{prop}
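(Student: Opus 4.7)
The plan is to prove two things: that $<\delta$-strongly compact cardinals are unbounded below $\delta$, and that $\delta$ is inaccessible (i.e.\ a regular strong limit). The strong-limit half will fall out of the first point, and regularity will use a similar coding trick.

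For unboundedness, fix any $\alpha<\delta$ and apply the hypothesis to the set $A=\alpha+1\subseteq\delta$ to obtain some $\kappa<\delta$ which is $\lessd$-strongly compact for $A$. If one had $\kappa\leq\alpha$, pick any $\lambda$ with $\alpha+1<\lambda<\delta$ and let $j\colon V\to M$ witness $\lambda$-strong compactness for $A$ with $\crit(j)=\kappa$. Then $j(A)=j(\alpha)+1$ in $M$, and since $\alpha\geq\kappa$ forces $j(\alpha)\geq j(\kappa)>\lambda$, one would get $j(A)\cap\lambda=\lambda$ while $A\cap\lambda=\alpha+1\subsetneq\lambda$, contradicting $A\cap\lambda=j(A)\cap\lambda$. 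Hence $\kappa>\alpha$, so $\lessd$-strongly compact cardinals are unbounded below $\delta$. Each such $\kappa$ is in particular $\kappa$-strongly compact, hence measurable and therefore inaccessible, so $\delta$ is automatically a strong limit.

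For regularity, suppose for contradiction that $\cf(\delta)=\mu<\delta$ and fix a strictly increasing cofinal map $f\colon\mu\to\delta$ with $f(0)>\mu$. Set $A=(\mu+1)\cup\ran(f)\subseteq\delta$ and let $\kappa<\delta$ be $\lessd$-strongly compact for this $A$. Since $\mu+1\subseteq A$, rerunning the argument of the previous paragraph (with $\mu$ in place of $\alpha$) forces $\kappa>\mu$, and in particular $j(\mu)=\mu$ and $j(f)\colon\mu\to j(\delta)$ satisfies $j(f)(\alpha)=j(f(\alpha))$ for every $\alpha<\mu$ and any witnessing embedding $j$ with $\crit(j)=\kappa$. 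By cofinality of $f$, choose $\lambda\in(\kappa,\delta)$ large enough that some $f(\alpha_0)$ lies in $[\kappa,\lambda)$, and let $j$ be a $\lambda$-strong compactness for $A$ embedding. For every $\alpha<\mu$ with $f(\alpha)\geq\kappa$ we have $j(f)(\alpha)=j(f(\alpha))\geq j(\kappa)>\lambda$, so $\ran(j(f))\cap\lambda=\{f(\alpha):f(\alpha)<\kappa\}$ and thus $j(A)\cap\lambda=(\mu+1)\cup\{f(\alpha):f(\alpha)<\kappa\}$. But $f(\alpha_0)\in A\cap\lambda$ while $f(\alpha_0)\notin j(A)\cap\lambda$ (since $f(\alpha_0)\geq\kappa>\mu$), contradicting $A\cap\lambda=j(A)\cap\lambda$.

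The main obstacle is not any deep embedding manipulation but bookkeeping in the regularity step: one must simultaneously guarantee $\kappa>\mu$ (so that $j$ fixes $\mu$ and the identity $j(f)(\alpha)=j(f(\alpha))$ is available for every $\alpha<\mu$) and exhibit a concrete witness in $(A\cap\lambda)\setminus(j(A)\cap\lambda)$. Including $\mu+1$ as an explicit subset of $A$ and starting $f$ above $\mu$ handles both issues cleanly; the covering clause of the strong compactness embedding is not needed here, only the agreement $A\cap\lambda=j(A)\cap\lambda$ together with the location of $\crit(j)$.
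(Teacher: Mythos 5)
Your proof is correct and follows essentially the same strategy as the paper's: for each part you pick a suitable $A\subseteq\delta$ and derive a contradiction from the agreement $A\cap\lambda=j(A)\cap\lambda$ together with $\crit(j)=\kappa$ and $j(\kappa)>\lambda$. The only differences are organizational (you get the strong limit property from unboundedness of the $\lessd$-strongly compact cardinals, and your regularity argument packages the paper's unbounded small set as $(\mu+1)\cup\ran(f)$ to force $\kappa>\mu$ explicitly), and all the steps check out.
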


\begin{proof}
To show that $\delta$ must be regular, assume otherwise and let $\cf(\delta)=\kappa_0<\delta$. Fix an unbounded set $A\subseteq \delta$ such that $|A|=\kappa_0$ and $\min(A)>\kappa_0$, and let $\kappa$ be $\lessd$-strongly compact for $A$. Pick $\lambda\in(\kappa,\delta)$ such that $A\cap (\kappa,\lambda)$ is non-empty and let $j:V\to M$ be a $\lambda$-strong compactness for $A$ embedding with $\crit(j)=\kappa$. Since $A\cap \lambda=j(A)\cap \lambda$, it follows that $j(A)\cap j(\kappa)$ is non-empty and by elementarity, $A\cap \kappa$ is non-empty. However, since $\kappa$ is regular, $A\cap \kappa$ must be bounded by some $\alpha<\kappa$. By elementarity, $j(A)\cap j(\kappa)$ is also bounded by $j(\alpha)=\alpha<\kappa$. But then $j(A)\cap (\kappa,\lambda)=A\cap (\kappa,\lambda)$ should be empty, which is absurd.

If $\delta$ were a successor cardinal, say $\delta=\kappa^+$, then there would be no cardinal below $\delta$ which is $\lessd$-strongly compact for $A$, where $A=\kappa$. Thus, $\delta$ must be a limit cardinal.

If there was an ordinal $\alpha<\delta$ such that there are no $\lessd$-strongly compact cardinals in $[\alpha,\delta)$, then let $\kappa$ be $\lessd$-strongly compact for $B$, where $B=\alpha$. Pick $\lambda>\alpha$ and let $j:V\to M$ be a $\lambda$-strongly compact for $B$ embedding. Then, $B\cap \lambda=j(B)\cap \lambda$, but this is absurd since $B\cap \lambda=\alpha$ and $j(B)\cap \lambda=\lambda$, because $j(B)=j(\alpha)\geq j(\kappa)>\lambda$. Hence, $\delta$ must be a limit of $\lessd$-strongly compact cardinals which also implies that $\delta$ is a strong limit.
\end{proof}

The following Proposition is based on properties of Woodin cardinals (see Lemma 11 in \cite{cody-easton-woodin} for instance). We will use the following notation: \emph{$\kappa$ is $\lessd$-strongly compact for $A_1\oplus A_2$}, where $A_1,A_2\subseteq \delta$, if for all $\lambda\in(\kappa,\delta)$ there is a $\lambda$-strong compactness embedding $j:V\to M$ with $\crit(j)=\kappa$, such that $A_1\cap \lambda=j(A_1)\cap \lambda$ and $A_2\cap \lambda=j(A_2)\cap \lambda$.

\begin{prop}\label{prop:wfsc-second-order}
The following are equivalent for a cardinal $\delta$.
\begin{enumerate}
	\item For every $A\subseteq \delta$, there is $\kappa<\delta$ which is $\lessd$-strongly compact for $A$.
	\item For every $A_1,A_2 \subseteq \delta$, there is $\kappa<\delta$ which is $\lessd$-strongly compact for $A_1\oplus A_2$.
	\item For every $A\subseteq V_\delta$, there is a $\kappa<\delta$ which is $\lessd$-strongly compact and $\lessd$-strong for $A$.
\end{enumerate}
\end{prop}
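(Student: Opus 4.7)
The plan is to prove the cycle $(3) \Rightarrow (1) \Rightarrow (2) \Rightarrow (3)$. The direction $(3) \Rightarrow (1)$ follows directly from Corollary~\ref{cor:sc-str}: given $A\subseteq\delta\subseteq V_\delta$, the cardinal $\kappa$ supplied by (3) is both $\lambda$-strong for $A$ and $\lambda$-strongly compact for every $\lambda\in(\kappa,\delta)$, and hence $\lambda$-strongly compact for $A$.

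For $(1) \Rightarrow (2)$, given $A_1,A_2\subseteq\delta$, I would form the coded set $A=\{\alpha+\alpha:\alpha\in A_1\}\cup\{\alpha+\alpha+1:\alpha\in A_2\}$, which lies in $\delta$ by Proposition~\ref{prop:wfsc-inaccessible}. Applying (1) to $A$ yields $\kappa<\delta$ that is $\lessd$-strongly compact for $A$. For every infinite cardinal $\lambda\in(\kappa,\delta)$ one has $\alpha+\alpha<\lambda$ iff $\alpha<\lambda$, so a $\lambda$-strong compactness embedding witnessing $A\cap\lambda=j(A)\cap\lambda$ automatically witnesses $A_i\cap\lambda=j(A_i)\cap\lambda$ for $i=1,2$. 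Since cofinally many infinite cardinals lie below $\delta$, this gives (2).

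The main step is $(2)\Rightarrow(3)$, and the essential difficulty is extracting the strongness required for $A\subseteq V_\delta$ from what (2) supplies, namely strong compactness for subsets of $\delta$. Given $A\subseteq V_\delta$, fix a canonical definable bijection $c:V_\delta\to\delta$ chosen so that $c^{-1}[\mu]=V_\mu$ for every inaccessible $\mu<\delta$ and so that $c\restriction V_\mu$ is definable from $V_\mu$ alone (for instance, by enumerating each rank level using the characteristic-function ordering of subsets of the previous level). With $\pi:\delta\times\delta\to\delta$ a G\"odel pairing, set
\[
A_1=c[A], \qquad A_2=\{\pi(c(x),c(y)) : x,y\in V_\delta,\ x\in y\},
\]
so that $A_2$ encodes the $\in$-relation on $V_\delta$ via $c$. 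Applying (2) yields $\kappa<\delta$ that is $\lessd$-strongly compact for $A_1\oplus A_2$.

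To verify $\kappa$ witnesses (3) for $A$, fix an inaccessible $\lambda\in(\kappa,\delta)$ and let $j:V\to M$ be a $\lambda$-strong compactness embedding with $\crit(j)=\kappa$ preserving both $A_1$ and $A_2$ up to $\lambda$. The set $A_2\cap\lambda$ decodes via $\pi$ to a well-founded extensional relation on $\lambda$ whose Mostowski collapse in $V$ is precisely $V_\lambda$. Since $A_2\cap\lambda=j(A_2)\cap\lambda$ lies in $M$ and Mostowski collapse is absolute for transitive classes, $M$ computes the same collapse, forcing $V_\lambda\in M$ and hence $V_\lambda\subseteq M$; in particular $j$ is $\lambda$-strong. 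Now $V_\lambda^M=V_\lambda$, and the local definability of $c$ yields $c\restriction V_\lambda=j(c)\restriction V_\lambda$, so the preservation of $A_1$ translates into $A\cap V_\lambda=j(A)\cap V_\lambda$. Thus $j$ witnesses that $\kappa$ is both $\lambda$-strong for $A$ and $\lambda$-strongly compact, establishing (3). The main obstacle is exactly this jump from strong compactness to strongness; the role of $A_2$ together with the absoluteness of the Mostowski collapse overcomes it cleanly, with no need to invoke Proposition~\ref{prop:sc-str} to combine embeddings.
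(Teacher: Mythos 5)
Your proof is correct and follows essentially the same route as the paper: $(3)\Rightarrow(1)$ via Corollary \ref{cor:sc-str}, $(1)\Leftrightarrow(2)$ by an absolute coding of pairs of subsets of $\delta$, and $(2)\Rightarrow(3)$ by coding the membership relation of $V_\delta$ onto $\delta$ coherently at inaccessible levels and using absoluteness of the Mostowski collapse to recover $V_\lambda\subseteq M$. The only differences are cosmetic (even/odd interleaving instead of G\"odel pairing for $(1)\Rightarrow(2)$, and phrasing the coding via a bijection $c$ rather than its inverse collapse $\pi$).
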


\begin{proof}
$(1)$ and $(2)$ are clearly equivalent, since we can code two sets of ordinals using an absolute pairing function, such as the G\"odel pairing function. 

For $(2)\rightarrow (3)$, fix a set $A\subseteq V_\delta$ and using the fact that by \ref{prop:wfsc-inaccessible} $\delta$ is inaccessible, let $R$ be a relation on $\delta$ such that the Mostowski collapse $\pi:\langle \delta,R,\rangle \to \langle V_\delta,\in\rangle$ has the property that for every $\beth$-fixed point $\lambda<\delta$, $\pi\restriction \lambda:\langle \lambda,R\restriction \lambda\rangle\simeq \langle V_\lambda,\in\rangle$. Let $A_1=\{\langle \alpha,\beta\rangle_G \mid \langle \alpha,\beta\rangle\in R\}$ and $A_2=\pi^{-1}``A$. By our assumption, there is $\kappa$ which is $\lessd$-strongly compact and strong for $A_1\oplus A_2$, so for any $\beth$-fixed point $\lambda<\delta$ there is $j:V\to M$ with $\crit(j)=\kappa$, the weak $\lambda$-covering property, $A_1\cap \lambda=j(A_1)\cap \lambda$ and $A_2\cap \lambda=j(A_2)\cap \lambda$. The set $A_1\cap \lambda$ codes $R\restriction \lambda$, from which we can obtain $V_\lambda$. Thus, $V_\lambda\subseteq j(A_2)$. By elementarity, we also have that $\pi\restriction \lambda=j(\pi)\restriction \lambda$ and it is now easy to see that $A_2\cap V_\lambda=j(A_2)\cap V_\lambda$ implies $j(A)\cap V_\lambda=A\cap V_\lambda$.

Finally, $(3)\rightarrow (1)$ follows easily from \ref{cor:sc-str}, so the proof is complete.
\end{proof}

It now follows that every Woodin for strong compactness cardinal is Woodin and every Vop\v enka cardinal is Woodin for strong compactness. However, the following result shows that any Woodin limit of supercompact cardinals is Woodin for strong compactness and there are plenty of such cardinals below any Vop\v enka cardinal.

\begin{prop}
Suppose $\delta$ is Woodin and there are unboundedly many $\lessd$-supercompact cardinals below $\delta$. Then $\delta$ is Woodin for strong compactness.
\end{prop}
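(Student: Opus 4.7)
The plan is to verify condition (3) of Proposition~\ref{prop:wfsc-second-order}: for every $A\subseteq V_\delta$, produce a $\kappa<\delta$ which is simultaneously $\lessd$-strongly compact and $\lessd$-strong for $A$. Let $C\subseteq\delta$ denote the (unbounded) set of $\lessd$-supercompact cardinals below $\delta$. Applying Woodinness to $A\oplus C$ yields $\kappa<\delta$ that is $\lessd$-strong for both $A$ and $C$; the remaining task is to upgrade $\kappa$ to being $\lessd$-strongly compact.

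Fix $\lambda<\delta$ with $\lambda\geq\kappa$, pick $\nu\in C$ with $\nu>\lambda$, and pick $\mu<\delta$ with $\mu>\nu$. Take a $\mu$-strongness for $A,C$ embedding $j_1\colon V\to M_1$ derived from a $(\kappa,\mu)$-extender. Because $\nu\in C\cap V_\mu=j_1(C)\cap V_\mu$, $M_1$ sees $\nu$ as $<j_1(\delta)$-supercompact, and in particular measurable; thus I may take an ultrapower embedding $j_2\colon M_1\to M_2$ inside $M_1$ with $\crit(j_2)=\nu$, and set $j:=j_2\circ j_1$. Then $\crit(j)=\kappa$, $j(\kappa)>\mu>\lambda$, and since $\crit(j_2)=\nu>\lambda$ forces $j_2$ to fix $V_\lambda$ pointwise, we have $V_\lambda\subseteq M_2$ and $A\cap V_\lambda=j(A)\cap V_\lambda$ (using the $\mu$-strongness of $j_1$ for $A$).

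The hard part will be checking the weak $\lambda$-covering property for $j$, where some care is needed because $\crit(j_2)=\nu\neq j_1(\kappa)$, so the natural covering bound coming from $j_2$ is no longer $j(\kappa)$. The trick is that the choice $\nu>\lambda$ is what makes things fit: since $j_1\!\restriction\!\lambda\in M_1$ for the extender embedding $j_1$, the set $j_1''\lambda\in M_1$ has $M_1$-cardinality $\lambda$, and therefore $s:=j_2(j_1''\lambda)\in M_2$ has $M_2$-cardinality $j_2(\lambda)=\lambda$ (as $j_2$ fixes $\lambda$), which is strictly below $j(\kappa)$. By elementarity $s$ contains $j_2''(j_1''\lambda)=j''\lambda$, so $j$ witnesses the $\lambda$-strong compactness of $\kappa$. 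Together with $\lessd$-strongness for $A$, this fulfills condition~(3) of Proposition~\ref{prop:wfsc-second-order}, and we conclude that $\delta$ is Woodin for strong compactness.
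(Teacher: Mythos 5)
There is a genuine gap at the covering step. You assert that $j_1\restriction\lambda\in M_1$ (equivalently, $j_1``\lambda\in M_1$) because $j_1$ is a $(\kappa,\mu)$-extender embedding, but this is false as soon as $\lambda\geq\kappa^+$. Indeed, every ordinal below $j_1(\kappa^+)$ has the form $j_1(f)(a)$ with $a\in[\mu]^{<\omega}$ and (after trimming) $f:[\kappa]^{|a|}\to\kappa^+$; since $\ran(f)$ is bounded below $\kappa^+$, such an ordinal lies below some $j_1(\beta)$ with $\beta<\kappa^+$, so $j_1(\kappa^+)=\sup j_1``\kappa^+$ has cofinality $\kappa^+$ in $V$. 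If $j_1``\lambda$ were in $M_1$, so would $j_1``\kappa^+=j_1``\lambda\cap j_1(\kappa^+)$ be, and then $M_1$ would see its regular cardinal $j_1(\kappa^+)$ as the supremum of a set of order type $\kappa^+<j_1(\kappa)<j_1(\kappa^+)$ --- a contradiction. A telltale sign that something is off: if $j_1``\lambda$ really were an element of $M_1$ of $M_1$-cardinality $\lambda<j_1(\kappa)$, then $j_1$ alone would already satisfy the weak $\lambda$-covering property and your entire detour through $\nu$ and $j_2$ would be superfluous.

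The step can be repaired, but you must use more of $\nu$ than mere measurability. Since $\nu\in C\cap\mu=j_1(C)\cap\mu$, the model $M_1$ sees $\nu$ as ${<}j_1(\delta)$-supercompact, so inside $M_1$ you may choose $j_2:M_1\to M_2$ with $\crit(j_2)=\nu$ satisfying the weak $j_1(\lambda)$-covering property. Then $j``\lambda=j_2``(j_1``\lambda)\subseteq j_2``j_1(\lambda)$, and the latter is covered by some $s\in M_2$ with $|s|^{M_2}<j_2(\nu)<j_2(j_1(\kappa))=j(\kappa)$; this is exactly the device of Proposition \ref{prop:sc-str}, with $\nu$ playing the role of $j_1(\kappa)$, and the inequality $j_2(\nu)<j(\kappa)$ is what your choice $\nu<\mu<j_1(\kappa)$ buys you. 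The paper takes a different and shorter route: it first observes that $S\cap\kappa$ is unbounded in $\kappa$ (using the $\lessd$-strongness of $\kappa$ for the set $S$ of $\lessd$-supercompact cardinals), so $\kappa$ is a measurable limit of ${<}\delta$-strongly compact cardinals and Menas' theorem gives that $\kappa$ is $\lessd$-strongly compact outright; Corollary \ref{cor:sc-str} then upgrades this to $\lessd$-strong compactness for $A$. Either route is fine once your covering argument is corrected.
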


\begin{proof}
Let $S\subseteq \delta$ denote the collection of $\lessd$-supercompact cardinals below $\delta$. Fix any $A\subseteq \delta$ and let $\kappa<\delta$ be a $\lessd$-strong for both $A$ and $S$ (not necessarily witnessed by a single embedding). Then, $S\cap \kappa$ is unbounded and the usual proof of Menas' result, shows that $\kappa$ must be $\lessd$-strongly compact. By \ref{cor:sc-str}, it follows that $\kappa$ is $\lessd$-strongly compact for $A$ and as $A$ was chosen arbitrarily, $\delta$ is Woodin for strong compactness.
\end{proof}

In the following result, we provide further characterisations of Woodinised strong compactness, analogous to (1) of \ref{prop:defns-woodin} and \ref{prop:defns-vopenka}.

\begin{thm} \label{thm:wfsc-definable}
The following are equivalent for a cardinal $\delta$.
\begin{enumerate}
	\item $\delta$ is Woodin for strong compactness.
	\item For every function $f:\delta\to \delta$ there is $\kappa<\delta$ which is a closure point of $f$ and there is an elementary embedding $j:V\to M$ with $\crit(j)=\kappa$, $V_{j(f)(\kappa)}\subseteq M$ and $j$ satisfies the $j(f)(\kappa)$-covering property.
	\item For every function $f:\delta\to \delta$ there is $\kappa<\delta$ which is a closure point of $f$ and there is an elementary embedding $j:V\to M$ with $\crit(j)=\kappa$, $V_{j(f)(\kappa)}\subseteq M$, $j$ satisfies the $j(f)(\kappa)$-covering property, and $j$ is generated by an extender $E\in V_\delta$ and a fine ultrafilter on $\powerset_\kappa\lambda$ for some $\lambda<\delta$.
\end{enumerate}
\end{thm}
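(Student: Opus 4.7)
The proof proceeds by establishing $(1) \Rightarrow (3) \Rightarrow (2) \Rightarrow (1)$. The middle implication is immediate, since $(3)$ is merely a structural strengthening of $(2)$.

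For $(1) \Rightarrow (3)$, fix $f:\delta\to\delta$ and let $C \subseteq \delta$ be the club of its closure points. Code the pair $(f, C)$ as a single set $A \subseteq V_\delta$. By Proposition~\ref{prop:wfsc-second-order}(3) there is $\kappa < \delta$ which is both $\lessd$-strongly compact and $\lessd$-strong for $A$; applying the proposition to $A \oplus \{\alpha\}$ for each $\alpha < \delta$ shows (via the usual trick that $\lessd$-strongness for $\{\alpha\}$ forces $\alpha<\kappa$) that such $\kappa$ are cofinal in $\delta$, so we may take $\kappa \in C$, i.e., a closure point of $f$. Now fix a $\beth$-fixed point $\lambda < \delta$ with $\lambda > f(\kappa)$ and $\cf(\lambda) > \kappa$. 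By $\lambda$-strong compactness of $\kappa$, pick a fine ultrafilter $U \in V$ on $\powerset_\kappa \lambda$; by $\lambda$-strongness for $A$, pick a $(\kappa, \mu)$-extender $E \in V_\delta$ (for a suitable $\mu \leq \lambda$) with $V_\lambda \subseteq M_E$ and $A \cap V_\lambda = j_E(A) \cap V_\lambda$. Following Remark~\ref{rmrk:full-covering}, let $j_2 : M_E \to M$ be the ultrapower of $M_E$ by $j_E(U)$ and set $j := j_2 \circ j_E$. Then $\crit(j) = \kappa$, $V_\lambda \subseteq M$, $A \cap V_\lambda = j(A) \cap V_\lambda$, and $j$ has the full $\lambda$-covering property. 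The $A$-agreement forces $j(f)(\kappa) = f(\kappa) < \lambda$, hence $V_{j(f)(\kappa)} \subseteq V_\lambda \subseteq M$ and $j$ inherits the $j(f)(\kappa)$-covering property. Since $E \in V_\delta$ and $U$ is a fine ultrafilter on $\powerset_\kappa \lambda$ with $\lambda < \delta$, the embedding $j$ is of the form required by $(3)$.

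For $(2) \Rightarrow (1)$, I adapt the classical Woodin-cardinal argument. Given $A \subseteq \delta$, define $f:\delta\to\delta$ by $f(\alpha) = 0$ if $\alpha$ is $\lessd$-strongly compact for $A$, and otherwise $f(\alpha) = $ the least $\lambda < \delta$ such that $\alpha$ is not $\lambda$-strongly compact for $A$. If $f(\alpha) = 0$ for some $\alpha$, we are done. Otherwise, $(2)$ applied to $f$ yields $\kappa$ a closure point of $f$ and $j : V \to M$ with $\crit(j) = \kappa$, $V_\mu \subseteq M$, and the $\mu$-covering property, where $\mu := j(f)(\kappa)$. From $j$ I extract the $(\kappa, \mu)$-extender $E$ (which lies in $V_\delta$) and, from a $\mu$-covering witness $s \in M$ with $j``\mu \subseteq s$ and $|s|^M < j(\kappa)$, the fine ultrafilter $U = \{X \subseteq \powerset_\kappa \mu : s \in j(X)\}$ in $V$. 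Combining $E$ and $U$ via the construction of Remark~\ref{rmrk:full-covering} produces an embedding $j^*$ exhibiting $\kappa$ as $\mu$-strongly compact. A reflection argument --- using $V_\mu = V_\mu^M$ to place the data defining $E$ (and the ingredients for $U$) inside $M$, together with the elementary translation of $j(f)(\kappa) = \mu$, which in $M$ asserts that $\kappa$ is not $\mu$-strongly compact for $j(A)$ --- delivers the desired contradiction.

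The main obstacle lies in the reflection step of $(2) \Rightarrow (1)$: one must secure the $A$-agreement $A \cap \mu = j^*(A) \cap \mu$ so that $j^*$ actually exhibits $\kappa$ as $\mu$-strongly compact \emph{for $A$}, not merely $\mu$-strongly compact, thereby contradicting $f(\kappa)\leq\mu$. This is the standard Woodin extractor maneuver: $E$ is coded within $V_{\mu + \omega}$, and since $V_\mu \subseteq M$ forces $V_\mu = V_\mu^M$, $E$ sits inside $M$ up to routine coding; the factor map $k : M_E \to M$ being the identity on $V_\mu$ transfers the agreement through. Corollary~\ref{cor:sc-str} then closes the step by upgrading ``$\mu$-strong for $A$ and $\mu$-strongly compact'' to ``$\mu$-strongly compact for $A$''.
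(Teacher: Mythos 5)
Your architecture matches the paper's: $(1)\Rightarrow(3)$ via Proposition \ref{prop:wfsc-second-order}(3) together with the extender-plus-ultrafilter composition of Remark \ref{rmrk:full-covering}, and $(2)\Rightarrow(1)$ by the Woodin-style reflection. But there is a genuine gap in your $(2)\Rightarrow(1)$, precisely at the point you dismiss as ``routine coding''. You define $f(\alpha)$ to be the \emph{least} $\lambda$ such that $\alpha$ is not $\lambda$-strongly compact for $A$, set $\mu=j(f)(\kappa)$, and then extract a $(\kappa,\mu)$-extender $E$ and a fine ultrafilter $U$ on $\powerset_\kappa\mu$, claiming that $V_\mu\subseteq M$ places them in $M$. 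It does not: $E$ is (a code for) a subset of $V_\mu$, hence an element of rank at least $\mu+1$, and $U$ lives even higher; the hypothesis $V_\mu\subseteq M$ gives no agreement between $\powerset(V_\mu)^V$ and $\powerset(V_\mu)^M$, so neither $E$ nor $U$ need belong to $M$, and without them the reflection inside $M$ cannot be carried out. This is exactly why the paper pads $f$: it sets $f(\alpha)$ equal to an \emph{inaccessible} $\gamma$ strictly above the least failure point $\beta$, so that the failure of strong compactness for $j(A)$ in $M$ is witnessed at some $\beth$-fixed point $\lambda<j(f)(\kappa)$; the $(\kappa,\lambda)$-extender and the fine ultrafilter on $\powerset_\kappa\lambda$ derived from $j$ then have rank below $j(f)(\kappa)$ and so lie in $V_{j(f)(\kappa)}\subseteq M$. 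Your argument is repairable by the same padding, but as written the key objects are simply not available in $M$. (Relatedly, the contradiction must be derived inside $M$ against the statement that $\kappa$ is not $\lambda$-strongly compact for $j(A)$ there; your closing phrase ``contradicting $f(\kappa)\le\mu$'' conflates $V$'s $f$ and $A$ with $M$'s $j(f)$ and $j(A)$.)

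A smaller slip occurs in $(1)\Rightarrow(3)$: from ``the suitable $\kappa$ are cofinal in $\delta$'' you cannot conclude ``we may take $\kappa\in C$'' --- a cofinal set can miss a club entirely. The detour through $C$ is unnecessary anyway: since your embedding satisfies $A\cap V_\lambda=j(A)\cap V_\lambda$ with $f$ coded into $A$, for every $\alpha<\kappa$ you get $j(f)(\alpha)=f(\alpha)<\lambda<j(\kappa)$, whence $f(\alpha)<\kappa$ by elementarity; this is how the paper obtains the closure point for free.
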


\begin{proof}

The proof is based on the corresponding arguments for Woodin cardinals, such as Lemma 34.2 \cite{jech} or Theorem 24.16 in \cite{kanamori}. 

To show $(1)\rightarrow (2)$, fix a function $f:\delta\to \delta$ and apply $(3)$ of \ref{prop:wfsc-second-order} for $A=f$ to fix a $\kappa$ which is $\lessd$-strongly compact and $\lessd$-strong for $f$. Pick $\lambda>f(\kappa)$ and let $j:V\to M$ be an elementary embedding with $\crit(j)=\kappa$, $j(\kappa)>\lambda$, $V_\lambda\subseteq M$, satisfying the $\lambda$-covering property and $f\cap V_\lambda=j(f)\cap V_\lambda$. Note that the last condition implies that $j(f)(\kappa)=f(\kappa)<\lambda$ and so $V_{j(f)(\kappa)}\subseteq M$. Also, for each $\alpha<\kappa$, $j(f)(\alpha)=f(\alpha)<\lambda<j(\kappa)$ and so, $f(\alpha)<\kappa$. Thus, $f``\kappa\subseteq \kappa$ and the proof is complete. 

For $(2)\rightarrow (3)$, fix $f:\delta\to\delta$ and let $g:\delta\to\delta$ be a function such that $g(\alpha)$ is an inaccessible cardinal above $f(\alpha)$. By our assumption there is $\kappa<\delta$ which is a closure point of $g$ and an embedding $j:V\to M$ with $\crit(j)=\kappa$, $V_{j(g)(\kappa)}\subseteq M$ and $j$ satisfies the $j(g)(\kappa)$-covering property. From $j$ we can derive a $(\kappa,j(g)(\kappa))$-extender $E$ and a fine ultrafilter $U$ on $\powerset_\kappa j(g)(\kappa)$. For simplicity let $\lambda=j(g)(\kappa)$. By elementarity, $\lambda$ is inaccessible in $M$ and since $V_{\lambda}\subseteq M$ it is inaccessible in $V$ too. Thus, the extender embedding $j_E:V\to M_E$ is $\lambda$-strong and has critical point $\kappa$. Also, $j_E(U)$ is a fine ultrafilter on $(\powerset_{j_E(\kappa)}j_E(\lambda))^{M_E}$, so the ultrapower embedding $k:M_E\to M_{j_E(U)}$ has $\crit(k)=j_E(\kappa)$ and satisfies the $j_E(\lambda)$-covering property. Now, as in \ref{cor:sc-str}, $j^*:=j_U\circ k$ is both $\lambda$-strong and $\lambda$-strongly compact and it is easy to see that $j^*(g)(\kappa)\leq \lambda$. (3) now follows since $\kappa$ is a closure point of $f$ and by elementarity, $j^*(f)(\kappa)<j^*(g)(\kappa)$.

$(3)$ trivally implies $(2)$ so it remains to show that $(2)$ implies $(1)$. Fix $A\subseteq \delta$ and let $f:\delta \to \delta$ be the function defined as follows. If $\alpha$ is $\lessd$-strongly compact for $A$, then let $f(\alpha)=0$. Otherwise, let $f(\alpha)$ be an inaccessible cardinal $\gamma$ greater than $\beta$, where $\beta<\delta$ is least such that $\alpha$ is not $\beta$-strongly compact for $A$. By our assumption, there is $\kappa<\delta$ such that $f``\kappa\subseteq \kappa$ and there is $j:V\to M$ with $\crit(j)=\kappa$, satisfying the $\lambda$-covering property and $V_{j(f)(\kappa)}\subseteq M$. Now it suffices to show that $\kappa$ is $<j(\delta)$-strongly compact for $j(A)$ in $M$, as elementarity will give the desired conclusion.

If this is not the case, then by the definition of $f$ there is some $\beth$-fixed point $\lambda<j(f)(\kappa)$ such that $\kappa$ is not $\lambda$-strongly compact for $j(A)$ in $M$. Note that $j(\kappa)$ is a closure point of $j(f)$ and so, $\lambda<j(f)(\kappa)<j(\kappa)$. Since $j$ satisfies the $j(f)(\kappa)$-covering property, it also satisfies the $\lambda$-covering property. From $j$, we can derive a $(\kappa,\lambda)$-extender $E$ and a fine ultrafilter $U$ on $\powerset_\kappa\lambda$. Since $j(f)(\kappa)$ is inaccessible in $M$, it follows that $E,U\in M$.

The arguments for the case of Woodin cardinals, show that using $E$ in $M$, we get an extender embedding $j_E:M\to N$ which is $\lambda$-strong for $j(A)$. In our case, we also have a fine ultrafilter on $\powerset_\kappa\lambda$ so $\kappa$ is both $\lambda$-strong for $j(A)$ and $\lambda$-strongly compact in $M$. It follows by \ref{cor:sc-str} that $\kappa$ is $\lambda$-strongly compact for $j(A)$ in $M$, which is a contradiction.
\end{proof}	

\medskip

These characterisations show that Woodinised strong compactness is a reasonable Woodin-like concept. (3) is not used in later arguments but it is worth noting that it is a $\Pi_1^1$-definition, which shows that the first Woodin for strong compactness cardinal is not even weakly compact.

As with the other Woodin-like cardinals, Woodin for strong compactness cardinals come equipped with a normal filter. Call a set $X\subseteq \delta$ \emph{Woodin for strong compactness in $\delta$} if for any $f:\delta\to \delta$ there is $\kappa\in X$ which is a closure point of $f$ and there is $j:V\to M$ with $\crit(j)=\kappa$ which satisfies the weak $j(f)(\kappa)$-covering property and $V_{j(f)(\kappa)}\subseteq M$. Let 
$$F=\{X\subseteq \delta\mid \delta-X\text{ is not Woodin for strong compactness in }\delta\}.$$

We can prove the following like in the case of Woodin or Vop\v enka cardinals.

\begin{prop}
$F$ is a (proper) filter on $\delta$ iff $\delta$ is Woodin for strong compactness.
\end{prop}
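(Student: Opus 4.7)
The plan is to verify the standard filter axioms directly from the definition, showing that every property except properness holds unconditionally, and that properness is equivalent to $\delta$ being Woodin for strong compactness.

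First I would observe that closure under supersets is immediate from the definition: if $X \in F$ and $X \subseteq Y \subseteq \delta$, then $\delta \setminus Y \subseteq \delta \setminus X$, and any witness $(\kappa, j)$ showing that $\delta \setminus Y$ is Woodin for strong compactness in $\delta$ automatically witnesses the same for $\delta \setminus X$. Hence $Y \in F$.

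The main step is closure under binary intersection, for which I would use the usual "pointwise max" trick. Suppose $X_1, X_2 \in F$ but, for contradiction, $(\delta \setminus X_1) \cup (\delta \setminus X_2) = \delta \setminus (X_1 \cap X_2)$ is Woodin for strong compactness in $\delta$. By assumption, there are functions $f_i \colon \delta \to \delta$ ($i = 1, 2$) such that no $\kappa \in \delta \setminus X_i$ is a closure point of $f_i$ admitting an embedding $j \colon V \to M$ with $\crit(j) = \kappa$, $V_{j(f_i)(\kappa)} \subseteq M$ and the weak $j(f_i)(\kappa)$-covering property. Define $f(\alpha) = \max(f_1(\alpha), f_2(\alpha))$. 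Then there exist $\kappa \in (\delta \setminus X_1) \cup (\delta \setminus X_2)$ and $j \colon V \to M$ witnessing the Woodin for strong compactness property at $\kappa$ for $f$. Since $f_i \leq f$ pointwise, elementarity gives $j(f_i)(\kappa) \leq j(f)(\kappa)$, whence $V_{j(f_i)(\kappa)} \subseteq V_{j(f)(\kappa)} \subseteq M$; and because weak $\mu$-covering trivially passes to smaller $\mu' \leq \mu$ (any cover of $j``\mu$ in $M$ covers $j``\mu'$), $j$ also satisfies weak $j(f_i)(\kappa)$-covering. Hence $\kappa$ is also a closure point of $f_i$ with witnessing embedding $j$, and whichever $i \in \{1,2\}$ satisfies $\kappa \in \delta \setminus X_i$ produces a contradiction. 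Thus $X_1 \cap X_2 \in F$.

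Finally, $\delta \in F$ trivially, since $\delta \setminus \delta = \emptyset$ cannot be Woodin for strong compactness in $\delta$ (the condition demands some $\kappa$ in the set). So $F$ is always closed under supersets, binary intersections, and contains $\delta$. The only remaining requirement for $F$ to be a proper filter is $\emptyset \notin F$, which unfolds to: $\delta \setminus \emptyset = \delta$ is Woodin for strong compactness in $\delta$. But this statement, applied with $X = \delta$, is identical to the definition of $\delta$ being Woodin for strong compactness. This gives both directions of the equivalence.

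The only genuine content is the intersection step; the potential subtlety is making sure that a single embedding $j$ chosen for the majorising function $f$ really does witness the clause for each $f_i$ simultaneously, which reduces to the monotonicity of the two clauses $V_{j(f)(\kappa)} \subseteq M$ and weak $j(f)(\kappa)$-covering under decreasing the parameter $j(f)(\kappa)$.
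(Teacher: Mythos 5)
Your argument is correct in its overall structure and is essentially the standard one that the paper alludes to (the paper gives no proof, deferring to the Woodin/Vop\v enka case in Kanamori): upward closure and $\delta\in F$ are immediate, and closure under intersections is exactly the pointwise-maximum trick, which works here because both clauses --- $V_{j(f)(\kappa)}\subseteq M$ and the weak $j(f)(\kappa)$-covering property --- are monotone when the parameter $j(f)(\kappa)$ decreases, as you correctly check.

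The one step that is not quite right as written is the last one. The condition $\emptyset\notin F$ unfolds to: for every $f:\delta\to\delta$ there is a closure point $\kappa<\delta$ of $f$ and an embedding $j$ with $\crit(j)=\kappa$, $V_{j(f)(\kappa)}\subseteq M$ and the \emph{weak} $j(f)(\kappa)$-covering property. This is not ``identical to the definition'' of Woodin for strong compactness: the official definition is about sets $A\subseteq\delta$ and $\lessd$-strong compactness for $A$, and the function-based characterisation in Theorem \ref{thm:wfsc-definable}(2) uses the \emph{full} $j(f)(\kappa)$-covering property, which is a priori stronger than weak covering. So you are implicitly using a third, weak-covering formulation and need to say why it is equivalent. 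It is: clause (2) of Theorem \ref{thm:wfsc-definable} trivially implies the weak-covering version, and the proof of $(2)\Rightarrow(1)$ there only ever uses $j$ to extract a fine ultrafilter on $\powerset_\kappa\lambda$, for which a weak cover $s\supseteq j``\lambda$ with $|s|^M<j(\kappa)$ already suffices (intersect $s$ with $j(\lambda)$ to land in $(\powerset_{j(\kappa)}j(\lambda))^M$). Adding a sentence to this effect closes the gap; without it, the final equivalence is asserted rather than proved.
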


Note that a set $X\subseteq \delta$ is in $F$ iff there is a function $f:\delta\to\delta$ such that for each closure point $\kappa$ of $f$ for which there is an elementary embedding $j:V\to M$ with $\crit(j)=\kappa$, satisfying the $j(f)(\kappa)$-covering property and $V_{j(f)(\kappa)}\subseteq M$, $\kappa\in X$. This can be seen as the definition of a set $X\subseteq \delta$ being ``measure one" with respect to $F$, while the notion of being ``Woodin for strong compactness in $\delta$" can be seen as being ``positive" with respect to $F$. 

The proof of the following result follows the same arguments as in the Woodin case; see 26.15 in \cite{kanamori}.

\begin{prop}
Suppose $\delta$ is Woodin for strong compactness and $F$ is the associated filter. Then:
\begin{enumerate}
\item $F$ is normal.
\item For any $A\subseteq \delta$, $\{\alpha<\delta\mid \alpha$ is $\lessd$-strongly compact for $A\}\in F$.
\item For any $A\subseteq V_\delta$, $\{\alpha<\delta\mid \alpha$ is $\lessd$-strongly compact and strong for $A\}\in F$.
\item For any $X\in F$, $\{\alpha<\delta\mid \alpha$ is measurable and there is a normal ultrafilter $U$ on $\alpha$ such that $X\cap \alpha\in U\}\in F$.
\end{enumerate}
\end{prop}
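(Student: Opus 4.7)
The plan is to exploit the dual characterisation of $F$ stated just before the proposition: a set $X\subseteq\delta$ belongs to $F$ iff there is a function $f:\delta\to\delta$ such that every closure point $\kappa$ of $f$ admitting an embedding $j:V\to M$ with $\crit(j)=\kappa$, $V_{j(f)(\kappa)}\subseteq M$, and the weak $j(f)(\kappa)$-covering property satisfies $\kappa\in X$. Call such an $f$ a \emph{witness} for $X\in F$. The overall strategy, following 26.15 of \cite{kanamori}, is that for each clause we construct a single function $f$ forced to witness membership of the target set in $F$.

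For normality (1), given $\langle X_\alpha:\alpha<\delta\rangle$ in $F$ with witnesses $f_\alpha$, merge the $f_\alpha$ into one $f:\delta\to\delta$ via the G\"odel pairing function so that $f(\beta)\geq f_\alpha(\beta)$ for all $\alpha<\beta$. Any closure point $\kappa$ of $f$ with an appropriate embedding $j$ is also a closure point of every $f_\alpha$ with $\alpha<\kappa$, and since $j(f_\alpha)(\kappa)\leq j(f)(\kappa)$ the same $j$ witnesses $\kappa\in X_\alpha$, placing $\kappa$ in the diagonal intersection. For (2), take precisely the function defined in the $(2)\to(1)$ direction of Theorem \ref{thm:wfsc-definable}: $f(\alpha)=0$ if $\alpha$ is $\lessd$-strongly compact for $A$, and otherwise $f(\alpha)$ is an inaccessible above the first $\beta<\delta$ where this fails; the reflection there forces $f(\kappa)=0$ at any closure point with the required embedding. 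Part (3) is analogous, using the combined property from clause (3) of Proposition \ref{prop:wfsc-second-order} to define the analogous $f$.

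The interesting clause is (4). Let $X\in F$ with witness $f$, let $\kappa$ be a closure point of $f$ admitting an embedding $j:V\to M$ as in the definition of $F$, and let $U=\{Z\subseteq\kappa\mid\kappa\in j(Z)\}$ be the normal ultrafilter derived from $j$, so $\kappa$ is measurable. Since $\crit(j)=\kappa$, we have $X\cap\kappa\in U$ iff $\kappa\in j(X)$. Note that $j(f)\restriction\kappa=f\restriction\kappa$, so $\kappa$ is a closure point of $j(f)$ in $M$. As in the proof of $(2)\to(3)$ of Theorem \ref{thm:wfsc-definable}, derive from $j$ a $(\kappa,j(f)(\kappa))$-extender $E$ and a fine ultrafilter $W$ on $\powerset_\kappa j(f)(\kappa)$; because $V_{j(f)(\kappa)}\subseteq M$ and $j(f)(\kappa)$ is inaccessible in $M$, both $E$ and $W$ lie in $M$, and their composition produces inside $M$ an embedding with critical point $\kappa$, $V_{j(f)(\kappa)}\subseteq N$, and the $j(f)(\kappa)$-covering property. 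By elementarity applied to ``$f$ witnesses $X\in F$'' we conclude $\kappa\in j(X)$, hence $X\cap\kappa\in U$. Thus the same $f$ witnesses membership of the set in (4) in $F$.

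The main obstacle is clause (4): we need the derived extender and fine ultrafilter to live in $M$ so the reflection in Theorem \ref{thm:wfsc-definable} can be re-run internally. This is exactly what $V_{j(f)(\kappa)}\subseteq M$ together with the inaccessibility of $j(f)(\kappa)$ in $M$ delivers, and it is the reason the definition of ``Woodin for strong compactness in $\delta$'' is framed using the closure point function $f$ rather than an arbitrary parameter.
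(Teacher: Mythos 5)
Your overall plan — exhibit, for each clause, a single function $f$ such that every closure point of $f$ admitting an embedding of the required kind lies in the target set — is the right shape, and your clause (4) is essentially sound: deriving $E$ and the fine ultrafilter $W$ inside $M$ (using $V_{j(f)(\kappa)}\subseteq M$ and the inaccessibility of $j(f)(\kappa)$) and re-running the composition internally does show $\kappa\in j(X)$, hence $X\cap\kappa\in U$. But clauses (2) and (3) contain a genuine gap. The reflection argument from the $(2)\Rightarrow(1)$ direction of Theorem \ref{thm:wfsc-definable} does \emph{not} force $f(\kappa)=0$; it forces $j(f)(\kappa)=0$, i.e.\ that $M\models$ ``$\kappa$ is ${<}j(\delta)$-strongly compact for $j(A)$''. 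Since $\kappa=\crit(j)$ is not in the range of $j$, the value $j(f)(\kappa)$ bears no relation to $f(\kappa)$: the former is computed in $M$ from $j(A)$, the latter in $V$ from $A$, and a statement of the form ``$M\models\kappa$ is $\lambda$-strongly compact for $j(A)$'' cannot be pulled back to ``$V\models\kappa$ is $\lambda$-strongly compact for $A$'' for $\lambda>\kappa$ (one only gets, by elementarity, the existence of \emph{some} witness below $\kappa$, which is how the equivalence theorem concludes). Concretely, your proposed witness fails to exclude the following configuration: $\kappa\notin T_A:=\{\alpha\mid\alpha$ is $\lessd$-strongly compact for $A\}$ is a measurable closure point of $f$ carrying a normal measure $U$ with $T_A\cap\kappa\in U$. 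Then for the ultrapower $j_U$ one has $j_U(f)(\kappa)=[f]_U=0$, so the requirements ``$V_{j_U(f)(\kappa)}\subseteq M$ and weak $j_U(f)(\kappa)$-covering'' are vacuous and $\kappa$ witnesses that $\delta\setminus T_A$ is still Woodin for strong compactness in $\delta$ relative to your $f$. What your argument genuinely delivers for this $f$ is only the clause-(4)-type conclusion $\kappa\in j(T_A)$, not $\kappa\in T_A$; closing this gap requires the relativised form of the equivalence theorem (every set that is Woodin for strong compactness in $\delta$ contains, for each $A$, a $\lessd$-strongly compact for $A$ cardinal), which is how the Woodin-case argument in 26.15 of Kanamori is organised, and which your write-up does not supply.

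Two smaller points. In (1), dominating only for $\alpha<\beta$ is not enough to make a closure point of $f$ a closure point of every $f_\alpha$ with $\alpha<\kappa$ (you lose control of $f_\alpha(\beta)$ for $\beta\leq\alpha$); you need $f(\xi)\geq f_\alpha(\beta)$ for all $\alpha,\beta\leq\xi$, which the G\"odel-pairing device does give with the correct bookkeeping. In (4), you should also justify $i(j(f))(\kappa)\leq j(f)(\kappa)$ for the internally built embedding $i$ (via the factor map, as in the paper's $(2)\Rightarrow(3)$ argument), since the definition of the witnessing property refers to $i(j(f))(\kappa)$ rather than $j(f)(\kappa)$; this is routine but not automatic.
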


\section{The first Woodin for strong compactness cardinal}

We now state the main result of the article, which is split in two theorems.

\begin{thm}\label{thm:first-woodin}
Suppose $\delta$ is a Vop\v enka cardinal. Then there is a forcing extension in which $\delta$ is Woodin for strong compactness and there are no Woodin cardinals below $\delta$.
\end{thm}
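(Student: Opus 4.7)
The plan is to iteratively destroy Woodinness at every cardinal below $\delta$ while showing, via lifting Vop\v enka-supplied supercompactness embeddings, that $\delta$ remains Woodin for strong compactness. Let $\p = \langle \p_\alpha, \dotq_\beta : \alpha \leq \delta, \beta < \delta \rangle$ be the Easton support $\delta$-iteration where, whenever $\beta$ is inaccessible in $V^{\p_\beta}$, $\dotq_\beta$ names the club-shooting poset from Section 2 (targeting non-${<}\beta$-strong cardinals), and is trivial otherwise. If $G$ is $V$-generic for $\p$, then for every $\alpha < \delta$ inaccessible in $V[G_\alpha]$, $G(\alpha)$ adds a club $C_\alpha \subseteq \alpha$ of cardinals not ${<}\alpha$-strong in $V[G_\alpha]$. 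Since the tail $\p/G_{\alpha+1}$ is $\alpha^+$-strategically closed, no new extenders of size ${<}\alpha$ are added, so members of $C_\alpha$ remain non-${<}\alpha$-strong in $V[G]$; because Woodinness of $\alpha$ would force the ${<}\alpha$-strong cardinals below $\alpha$ to be stationary, no $\alpha < \delta$ is Woodin in $V[G]$.

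For Woodin for strong compactness at $\delta$, fix $A \in V[G]$ with $A \subseteq \delta$ and $\lambda < \delta$; by Proposition \ref{prop:wfsc-second-order} it suffices to produce $\kappa < \delta$ which is $\lambda$-strongly compact for $A$ in $V[G]$. Let $\dot A \in V$ be a name for $A$, and apply Proposition \ref{prop:defns-vopenka} in $V$ to the code of $(\dot A, \p)$ as a subset of $V_\delta$ to obtain a ${<}\delta$-supercompact for $(\dot A, \p)$ cardinal $\kappa$. Pick any inaccessible $\lambda^* \in (\max\{\kappa, \lambda\}, \delta)$ together with a $\lambda^*$-supercompactness embedding $j : V \to M$ satisfying $\crit(j) = \kappa$, $j(\kappa) > \lambda^*$, ${}^{\lambda^*}M \subseteq M$, $j(\dot A) \cap V_{\lambda^*} = \dot A \cap V_{\lambda^*}$, and $j(\p) \cap V_{\lambda^*} = \p \cap V_{\lambda^*}$.

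The main work is to lift $j$ to $j^+ : V[G] \to M[G^*]$ in $V[G]$. Because $\p_\kappa \subseteq V_\kappa$ and $\crit(j) = \kappa$, $G_\kappa$ is $M$-generic for $j(\p)_\kappa = \p_\kappa$ and lifts $j$ to $j_1 : V[G_\kappa] \to M[G_\kappa]$; by Proposition \ref{prop:lift-width}, $j_1$ has width $\leq \lambda^*$. Since $\kappa$ is not measurable in the supercompactness ultrapower $M$, it is not ${<}j(\kappa)$-strong in $M[G_\kappa]$, so $(\bigcup G(\kappa)) \cup \{\kappa\}$ is a master condition in the club-shooting at $j(\kappa)$ over $M[G_\kappa]$ lying below every element of $j``G(\kappa)$; combining this with the ${<}j(\kappa)$-strategic closure of the forcing and the closure ${}^{\lambda^*}M[G_\kappa] \subseteq M[G_\kappa]$ (preserved from $V$ because $|\p_\kappa| \leq \kappa \leq \lambda^*$), a diagonalization (Proposition \ref{thm:diagonalisation}) yields an $M[G_\kappa]$-generic $H_\kappa$ through the master condition. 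For the tail of $j(\p)$ past stage $\kappa$ in $M$, the agreement $V_{\lambda^*}^V = V_{\lambda^*}^M$ forces $\p$ and $j(\p)$ to coincide up to stage $\lambda^*+1$, so $G^{(\kappa,\lambda^*]}$ can be re-used; for the piece above $\lambda^*$, the transferring lemma (Proposition \ref{thm:transering-generic}) applies since the current lift still has width $\leq \lambda^*$ and that piece is $\lambda^{*+}$-distributive (being $\lambda^{*+}$-strategically closed).

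The resulting $j^+$ has $\crit(j^+) = \kappa$, $j^+(\kappa) > \lambda$, $A \cap \lambda = j^+(A) \cap \lambda$ by the name agreement, and the weak $\lambda$-covering property since ${}^{\lambda^*}M \subseteq M$ in $V$ survives the lifting; hence $j^+$ is a $\lambda$-strong compactness for $A$ embedding, as required. The main technical obstacle is constructing $G^*$: Brooke-Taylor's preservation theorem (Theorem \ref{vopenka-ind}) is unavailable because $\p$ uses only strategically closed, not directed closed, stages, so the lifting must be performed by hand, combining trivial lifting, the master-condition diagonalization at $\kappa$, name agreement through $V_{\lambda^*}$, and transferring for the tail, all while tracking the closure of $M$ carefully enough to secure the covering property that distinguishes strong compactness from mere strongness.
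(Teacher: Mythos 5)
There is a genuine gap, and it sits exactly where the paper has to work hardest. Your master condition at stage $j(\kappa)$ of $j(\p)$ rests on the claim that ``$\kappa$ is not measurable in the supercompactness ultrapower $M$''. That is false: if ${}^{\lambda^*}M\subseteq M$ with $\lambda^*\geq 2^\kappa$, every $\kappa$-complete ultrafilter on $\kappa$ lies in $M$, so $\kappa$ is measurable there; worse, $\kappa$ is ${<}j(\kappa)$-strong in $M$ (the paper states exactly this in the proof of Claim \ref{claim:strong-sc-v}), and the iteration is designed to preserve strongness, so $\kappa$ remains ${<}j(\kappa)$-strong in the model over which the club at $j(\kappa)$ is shot. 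Hence $\bigl(\bigcup G(\kappa)\bigr)\cup\{\kappa\}$ is \emph{not} a condition in the club-shooting at $j(\kappa)$, and the direct lift of the supercompactness embedding is stuck. The paper avoids this in two ways you would need to reproduce: for the strongness part it chooses the witness $\kappa$ to be non-Woodin (Claim \ref{claim:strong-not-woodin}), so that no club is shot at $\kappa$ and, by elementarity, none at $j(\kappa)$; and for the strong compactness part it composes the supercompactness embedding $j_1$ with an auxiliary $(\kappa,\lambda)$-extender embedding $j_2:M\to N$ chosen so that $\kappa$ is \emph{not} $\lambda$-strong in $N$, which is what legitimises the master condition. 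Your design choice of shooting clubs at every inaccessible (rather than only at $V$-Woodin stages) makes this obstruction unavoidable rather than avoidable.

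A second structural gap: you never construct a generic for the stages of $j(\p)$ strictly between $\lambda^*$ and $j(\lambda^*)$. Lifting $j$ through $\p_\kappa$ requires an $M$-generic for $j(\p_\kappa)=j(\p)_{j(\kappa)}$, an iteration of length $j(\kappa)>\lambda^*$, not merely for $\p_\kappa$; re-using $G$ handles only the stages up to $\lambda^*$, and Proposition \ref{thm:transering-generic} applied to $\p_{>\lambda^*}$ produces a generic only for $j(\p_{>\lambda^*})$, which lives at stages $\geq j(\lambda^*)$. The missing middle is where the paper builds the hull $X=\{j(f)(\kappa,\lambda)\mid f\in V\}$, counts antichains, and diagonalises --- and, in Claim \ref{claim:strong-sc-v}, where it must actually force over $V[G]$ to obtain the last generic $H_4$ and then argue separately that the derived fine ultrafilter on $\powerset_\kappa\alpha$ already lies in $V[G]$. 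That last manoeuvre also undercuts your assertion that the covering property ``survives the lifting'' inside $V[G]$: without it you have only produced the strong compactness witness in an outer model. The overall architecture (destroy Woodins below $\delta$, lift Vop\v enka-supplied embeddings) matches the paper, but these two omissions are precisely the content of the theorem's proof.
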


\begin{thm}\label{thm:first-vopenka}
Suppose $\delta$ is a Vop\v enka cardinal. Then there is a forcing extension inside which $\delta$ remains a Woodin limit of $\lessd$ supercompact cardinals (and so, Woodin for strong compactness) and there are no Woodin for strong compactness cardinals below $\delta$.
\end{thm}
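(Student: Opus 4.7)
The plan is to construct an Easton support iteration $\mathbb{P}$ of length $\delta$ that destroys Woodin for strong compactness below $\delta$ while keeping $\delta$ Woodin and preserving cofinally many $\lessd$-supercompact cardinals below $\delta$. By Proposition~\ref{prop:wfsc-inaccessible}, to kill Woodin for strong compactness at $\kappa$ it suffices to arrange that $\kappa$ is no longer a limit of $\lessk$-strongly compact cardinals, so the iteration will shoot clubs that prevent this.

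At each stage $\kappa<\delta$ that is inaccessible in $V^{\mathbb{P}_\kappa}$, let $\dot{\mathbb{Q}}_\kappa$ be a name for the club-shooting forcing through the cardinals of $\kappa$ that are not $\lessk$-strongly compact in $V^{\mathbb{P}_\kappa}$. This is the natural variant of the first forcing described in Section~2, and shares the same basic properties: it is $\lessk$-strategically closed and $\kappa^+$-c.c. The iteration $\mathbb{P}$ then preserves cardinals and has the $\delta$-c.c. In $V[G]$, for each inaccessible $\kappa<\delta$ the added club consists of cardinals not $\lessk$-strongly compact in $V^{\mathbb{P}_\kappa}$; since the tail $\mathbb{P}_{[\kappa,\delta)}$ is $\kappa^+$-strategically closed and adds no subsets of $V_\kappa$, these cardinals remain non-$\lessk$-strongly compact in $V[G]$, so Proposition~\ref{prop:wfsc-inaccessible} yields that $\kappa$ is not Woodin for strong compactness. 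Preservation of $\delta$ as a Woodin cardinal follows from Theorem~\ref{thm:woodin-indesructibility}: given $A\subseteq V_\delta$, since $\delta$ is Vop\v enka (hence Woodin), pick a $\lessd$-strong-for-$A$ inaccessible $\kappa<\delta$; then $\mathbb{P}_\kappa\subseteq V_\kappa$ and every stage $\gamma\geq\kappa$ is $\gamma^+$-strategically closed, in particular $\kappa^+$-strategically closed.

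The main obstacle is showing that cofinally many $\lessd$-supercompact cardinals of $V$ remain $\lessd$-supercompact in $V[G]$. Because the club-shooting stages are strategically closed but not directed closed, Theorem~\ref{vopenka-ind} (Brooke-Taylor) is not available. Instead, for each $\lessd$-supercompact $\alpha<\delta$ of $V$ and each target $\lambda\in[\alpha,\delta)$, one must lift the $\lambda$-supercompactness embedding $j\colon V\to M$ through the iteration. Factoring $\mathbb{P}=\mathbb{P}_\alpha*\dot{\mathbb{P}}^{\mathrm{tail}}$, the initial segment is lifted by a standard master-generic construction using that $j(\mathbb{P}_\alpha)/G_\alpha$ is $\alpha$-closed in $M$, while the tail (which is $\alpha^+$-strategically closed) is lifted via Proposition~\ref{thm:diagonalisation}, exploiting that ${}^\lambda M\subseteq M$ and that the maximal antichains of the tail of $j(\mathbb{P})$ lying in $M[G_\alpha]$ are appropriately bounded.

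I expect this supercompactness preservation to be the technical heart of the argument. The counting of antichains under strategic (rather than directed) closure, and the verification that the lifting succeeds for cofinally many $\alpha<\delta$ supplied by the Vop\v enka hypothesis at $\delta$, together form the main delicate step.
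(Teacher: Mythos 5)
Your single-iteration plan runs into two concrete problems, both stemming from the fact that you force non-trivially at \emph{every} inaccessible stage, including exactly the cardinals whose large-cardinal properties must survive. First, your appeal to Theorem~\ref{thm:woodin-indesructibility} does not go through: its hypothesis requires a $\lessd$-strong for $A$ cardinal $\kappa$ such that \emph{all} stages $\geq\kappa$ are forced to be $\kappa^+$-strategically closed, but every $\lessd$-strong cardinal is inaccessible, so in your iteration stage $\kappa$ itself is the club-shooting at $\kappa$; that forcing adds a new unbounded subset of $\kappa$ and hence is only ${<}\kappa$-strategically closed, not $\kappa^+$-strategically closed. Second, and more seriously, the supercompactness preservation is not a ``standard master-generic construction.'' Since stage $\alpha=\crit(j)$ is non-trivial, any $M$-generic for $j(\p)$ containing $j``G$ must, at coordinate $j(\alpha)$, contain all conditions $j(q)=q$ from the stage-$\alpha$ generic; by genericity it then contains a closed condition extending the whole club $C_\alpha$, which therefore contains $\sup C_\alpha=\alpha$. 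So the lift forces $\alpha$ itself into the new club at $j(\alpha)$, and this requires $\alpha$ to be non-${<}j(\alpha)$-strongly compact in the target model. Unlike the measurable case (where $\kappa$ is provably non-measurable in the ultrapower), there is no way to arrange this for a $\lambda$-supercompactness embedding in general, and you would need it for every $\lambda<\delta$. You do not address this point, and it is where the argument breaks.

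The paper avoids both obstacles with a two-step strategy you are missing. It first forces (Theorem~\ref{thm:vopenka-eq}, an Apter--Laver style preparation whose iterands are directed closed, so Vop\v enka-ness survives by Theorem~\ref{vopenka-ind}) to make the $\lessd$-strongly compact cardinals below $\delta$ exactly the $\lessd$-supercompacts and measurable limits thereof, and to get GCH. This guarantees cofinally many $\lessd$-supercompact cardinals that are \emph{not} limits of $\lessd$-strongly compacts, hence by Proposition~\ref{prop:wfsc-inaccessible} not Woodin for strong compactness. The second iteration is then non-trivial only at the stages that were Woodin for strong compactness in the prepared model, and it shoots clubs of \emph{singular} cardinals (destroying Mahloness). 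Consequently the supercompacts to be preserved, and the $\lessd$-strong-for-$A$ witnesses needed for Theorem~\ref{thm:woodin-indesructibility}, sit at trivial stages; the master condition only inserts supremum points at coordinates in $(j(\kappa),j(\lambda))$, and those suprema are singular, hence legal members of the clubs. Without a preparatory step of this kind you also have no guarantee that any $\lessd$-supercompact cardinal below $\delta$ escapes being a target of the destruction at all.
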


These two results together establish the identity crisis of the first Woodin for strong compactness cardinal.

\begin{cor}
The first Woodin for strong compactness cardinal $\delta$ can consistently (modulo the existence of a Vop\v enka cardinal) be the first Woodin or the first Woodin limit of $\lessd$-supercompact cardinals.
\end{cor}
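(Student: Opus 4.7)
The plan is to obtain the two ends of the identity crisis directly from Theorems \ref{thm:first-woodin} and \ref{thm:first-vopenka}, with the only extra content being the observation that, in each resulting model, the cardinal $\delta$ automatically witnesses the corresponding minimality statement.

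For the ``small'' end I would apply Theorem \ref{thm:first-woodin} to the given Vop\v enka cardinal $\delta$, producing a forcing extension in which $\delta$ is Woodin for strong compactness and no cardinal strictly below $\delta$ is Woodin. Since every Woodin for strong compactness cardinal is in particular Woodin (as remarked right after Proposition \ref{prop:wfsc-second-order}), the absence of Woodins below $\delta$ forces the absence of Woodin for strong compactness cardinals there. Hence in this model $\delta$ is simultaneously the first Woodin and the first Woodin for strong compactness cardinal.

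For the other end I would apply Theorem \ref{thm:first-vopenka}, obtaining a forcing extension in which $\delta$ remains a Woodin limit of $\lessd$-supercompact cardinals while no cardinal below $\delta$ is Woodin for strong compactness. The unlabelled proposition that immediately precedes Theorem \ref{thm:wfsc-definable} shows that any Woodin cardinal with unboundedly many $\lessd$-supercompacts below it is already Woodin for strong compactness. Any hypothetical Woodin limit of $\lessd$-supercompacts strictly below $\delta$ would therefore yield a Woodin for strong compactness cardinal in that interval, contradicting the defining property of the model. Consequently $\delta$ is the first Woodin for strong compactness cardinal \emph{and} the first Woodin limit of $\lessd$-supercompacts.

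The corollary itself is essentially a bookkeeping step, so the ``main obstacle'' is not here but inside Theorems \ref{thm:first-woodin} and \ref{thm:first-vopenka}, whose proofs will carry all the forcing-theoretic work in Section~4. The only subtlety worth flagging in the corollary's own proof is to make sure that each model is invoked for the correct direction: Theorem \ref{thm:first-woodin} destroys Woodins and so kills \emph{any} intermediate candidate automatically, whereas for Theorem \ref{thm:first-vopenka} the intermediate candidates are killed one class-level higher, at the level of Woodin for strong compactness, which is why the downward-absoluteness argument has to go through the preceding proposition rather than through Proposition \ref{prop:wfsc-second-order} directly.
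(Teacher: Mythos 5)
Your proposal is correct and matches the paper's intent exactly: the paper gives no separate proof of the corollary, treating it as an immediate consequence of Theorems \ref{thm:first-woodin} and \ref{thm:first-vopenka}, and the two downward-absoluteness observations you supply (Woodin for strong compactness implies Woodin; a Woodin limit of supercompacts is Woodin for strong compactness by the proposition preceding Theorem \ref{thm:wfsc-definable}) are precisely the implicit bookkeeping needed.
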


This can be seen as a Woodinised analogue of Magidor's original identity crisis theorem, which states that the first strongly compact can consistently be the first measurable or the first supercompact cardinal.

\subsection{Proof of Theorem \ref{thm:first-woodin}}

Suppose $\delta$ is a Vop\v enka cardinal. We define an Easton support $\delta$-iteration $\p=\langle \p_\alpha,\dotq_\beta\mid \alpha\leq\delta,\beta<\delta\rangle$. as follows. Let $\dotq_0$ be a name for $\Add(\omega,1)$ and if $\p_\alpha$ has been defined and $\alpha$ was Woodin in $V$, then let $\dotq_\alpha$ name the forcing which shoots a club of non ${<}\alpha$-strong cardinals below $\alpha$ (see Section 2). Otherwise, let $\dotq_\alpha$ name the trivial forcing. Let $G\subseteq \p$ be a $V$-generic filter.

Firstly, notice that since we forced with $\Add(\omega,1)$ in the first stage, we introduced a very low closure point. By \ref{thm:cov-approx} the forcing creates no new instances of strongness, thus there is no Woodin cardinal below $\delta$ in $V[G]$. Now, it remains to show why $\delta$ remains Woodin for strong compactness. This follow from a series of claims.

\begin{claim}\label{claim:strong-not-woodin}
In $V$, for every $A\subseteq V_\delta$ there is a cardinal $\kappa<\delta$ which is $\lessd$-strongly compact and $\lessd$-strong for $A$, but is not Woodin.
\end{claim}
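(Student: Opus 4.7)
The plan is to argue by contradiction. Fix $A\subseteq V_\delta$ and suppose every $\kappa<\delta$ which is $\lessd$-strongly compact and $\lessd$-strong for $A$ is Woodin. Let $\kappa_0$ be the least such cardinal; by hypothesis $\kappa_0$ is Woodin. The goal is to extract a smaller cardinal $\kappa_1<\kappa_0$ with the same strong-compactness-and-strongness-for-$A$ properties, contradicting the minimality of $\kappa_0$.

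First I would apply the Woodinness of $\kappa_0$ to $A\cap V_{\kappa_0}\subseteq V_{\kappa_0}$ (using the equivalence in Proposition \ref{prop:defns-woodin}) to obtain $\kappa_1<\kappa_0$ which is ${<}\kappa_0$-strong for $A$. Then, since $\kappa_0$ is $\lessd$-strong for $A$, Proposition \ref{prop:strong-cohere} immediately upgrades this to $\kappa_1$ being $\lessd$-strong for $A$.

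The remaining task---and the main obstacle---is to upgrade $\kappa_1$ to being $\lessd$-strongly compact. The plan here is to establish a strong-compactness analog of Proposition \ref{prop:strong-cohere} via composition: for $\mu<\delta$, take a $\mu$-strong compactness embedding $j_0\colon V\to M$ for $\kappa_0$, preferably chosen with the richer structure in Remark \ref{rmrk:full-covering} so that the full $\mu$-covering property holds. By elementarity $\kappa_1$ is ${<}j_0(\kappa_0)$-strong in $M$, hence $\mu$-strong there, producing $j_1\in M$ with $\crit(j_1)=\kappa_1$. The composition $j=j_1\circ j_0\colon V\to N$ has critical point $\kappa_1$, and I would check that it witnesses $\mu$-strong compactness for $\kappa_1$ by taking the cover $s_0\in M$ of $j_0''\mu$ of size ${<}j_0(\kappa_0)$ (from full covering of $j_0$) and showing that an appropriate image under $j_1$ yields a cover of $j''\mu$ in $N$ of size below $j(\kappa_1)$.

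The crux of the difficulty is this size bound: a naive push-forward gives $|j_1(s_0)|^N<j_1(j_0(\kappa_0))$, which exceeds $j_1(\kappa_1)=j(\kappa_1)$. To circumvent this, I would apply the Woodinness of $\kappa_0$ not just to $A\cap V_{\kappa_0}$ but to a richer set $A\oplus S$, where $S\subseteq V_{\kappa_0}$ encodes strong-compactness data (for example, a predicate tracking which ${<}\kappa_0$-strongly compact cardinals exist below $\kappa_0$); this forces the reflecting cardinal $\kappa_1$ to inherit strong-compactness structure, not merely strongness. With this extra structure in place, the composition argument outlined above goes through and $\kappa_1$ turns out to be $\lessd$-strongly compact, contradicting the minimality of $\kappa_0$ and completing the proof.
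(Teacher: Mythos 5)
Your overall strategy---reflect below a minimal Woodin witness and derive a contradiction---is in the right spirit, but the step where you recover strong compactness for $\kappa_1$ has a genuine gap, and it is exactly the step you flag as the crux. First, the composition argument cannot be repaired by enriching the reflected predicate: if $j_0\colon V\to M$ is a $\mu$-strong compactness embedding with $\crit(j_0)=\kappa_0$ and $j_1$ has critical point $\kappa_1<\kappa_0$, then $j_0(\kappa_1)=\kappa_1$, so the covering bound you need for $j=j_1\circ j_0$ is $j(\kappa_1)=j_1(\kappa_1)$, whereas any cover pushed forward from $j_0$ only has size below $j_1(j_0(\kappa_0))$; no extra data carried along by the embeddings changes this arithmetic, so the assertion that ``the composition argument goes through'' once $S$ is added is unsupported. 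Second, the only viable reading of your fix is the Menas route: reflect a predicate $S$ of strongly compact (or supercompact) cardinals so that $\kappa_1$ becomes a measurable limit of such cardinals. But for that you need $S\cap\kappa_0$ to be unbounded in $\kappa_0$, and nothing in your setup guarantees this: $\kappa_0$ is merely the least cardinal that is $\lessd$-strongly compact and $\lessd$-strong for $A$, and such a cardinal need not have any $\lessd$-strongly compact cardinals below it, let alone unboundedly many.

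The paper sidesteps both problems by never attempting to reflect strong compactness downward. It fixes $S$ to be the set of $\lessd$-supercompact cardinals below $\delta$ (unbounded in $\delta$ because $\delta$ is Vop\v enka) and takes $\kappa$ to be the \emph{least} cardinal that is $\lessd$-strong for both $A$ and $S$ (witnessed by possibly different embeddings). Strongness for $S$ forces $S\cap\kappa$ to be unbounded in $\kappa$, so Menas' argument gives $\lessd$-strong compactness of $\kappa$ for free; and if $\kappa$ were Woodin, reflecting the pair $A\cap V_\kappa$, $S\cap V_\kappa$ via Proposition \ref{prop:defns-woodin} and then applying Proposition \ref{prop:strong-cohere} would produce a smaller cardinal $\lessd$-strong for both $A$ and $S$, contradicting minimality. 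The moral is that the minimisation must be taken over a pure strongness property (which Woodinness does reflect) rather than over the conjunction with strong compactness (which it does not). If you replace your $\kappa_0$ by the least cardinal $\lessd$-strong for both $A$ and $S$, your argument collapses to the paper's.
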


\begin{proof}
In $V$, fix a set $A\subseteq V_\delta$ and let $S$ denote the collection of $\lessd$-supercompact cardinals below $\delta$. Let $\kappa$ be the first $\lessd$-strong for both $A$ and $S$ cardinal below $\delta$ in $V$. By this, we mean that for each $\lambda<\delta$ there are embeddings that witness the $\lambda$-strongness for $A$ and $\lambda$-strongness for $S$ of $\kappa$, without necessarily having one witnessing both properties. Since $S$ is unbounded in $\delta$, it is also unbounded in $\kappa$ and thus, $\kappa$ is a measurable limit of $\lessd$-strongly compact cardinals. The usual proof of Menas' result shows that $\kappa$ must be $\lessd$-strongly compact in $V$. 

We claim that $\kappa$ is not Woodin in $V$. Otherwise, by applying (2) of \ref{prop:defns-woodin} for $A\cap V_\kappa$ and $S\cap V_\kappa$, we could find a cardinal $\kappa_0<\kappa$ which is ${<}\kappa$-strong for both $A\cap V_\kappa$ and $S\cap V_\kappa$. By \ref{prop:strong-cohere}, $\kappa_0$ is $\lessd$-strong for both $A$ and $S$, which contradicts the choice of $\kappa$.
\end{proof}

\begin{claim}\label{claim:strong-v}
In $V[G]$, for every $A\subseteq (V_\delta)^V$, there is a cardinal $\kappa<\delta$ which $\lessd$-strong for $A$.
\end{claim}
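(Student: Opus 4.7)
Fix $A \in V[G]$ with $A \subseteq (V_\delta)^V$ and any $\lambda < \delta$; it suffices to produce, in $V[G]$, a $\lambda$-strongness for $A$ embedding with some critical point $\kappa < \delta$. Since $\p \subseteq V_\delta$, I pick a $\p$-name $\dot A \in V_\delta^V$ for $A$, coded as a subset of $V_\delta$. Applying Claim \ref{claim:strong-not-woodin} in $V$ to $\dot A$ yields some $\kappa < \delta$ which is simultaneously $\lessd$-strong for $\dot A$, $\lessd$-strongly compact, and \emph{not} Woodin. Fix an inaccessible $\lambda' \in (\lambda,\delta)$ and, in $V$, let $j:V \to M$ be a $\lambda'$-strong for $\dot A$ embedding given by a short $(\kappa,\lambda')$-extender; then $j$ has width $\leq\kappa$ and $^\kappa M \subseteq M$ by Proposition \ref{prop:closure-extender}. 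Since Woodinness of $\kappa$ is determined inside $V_{\kappa+\omega} \subseteq V_{\lambda'} \subseteq M$, $\kappa$ is also not Woodin in $M$, and the first $\lambda'$ stages of $j(\p_\kappa)$ computed in $M$ coincide with $\p_{\lambda'}$ computed in $V$.

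The plan is to lift $j$ through $\p$ to $j^*:V[G] \to M[G^*]$ via Silver's criterion. Factor $\p = \p_\kappa * \dot{\p}_{[\kappa,\delta)}$ and $G = G_\kappa * G_{[\kappa,\delta)}$. Because $\kappa$ is not Woodin in $V$, stage $\kappa$ of $\p$ is trivial and the first non-trivial stage of $\dot{\p}_{[\kappa,\delta)}$ sits at the first Woodin above $\kappa$; hence $\dot{\p}_{[\kappa,\delta)}$ is $\kappa^+$-strategically closed in $V[G_\kappa]$. I construct $G^* = G_{\lambda'} * H * H'$ inside $V[G]$ as follows. The first $\lambda'$ coordinates of $G^*$ are just $G_{\lambda'}$, which is $M$-generic for $\p_{\lambda'}$ because $M$-maximal antichains of $\p_{\lambda'}$ are automatically $V$-maximal antichains. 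The middle piece $H$ is an $M[G_{\lambda'}]$-generic filter for $j(\p_\kappa)_{[\lambda',j(\kappa))}$, built by the diagonalisation lemma (Proposition \ref{thm:diagonalisation}), using the high strategic closure of this tail iteration in $M[G_{\lambda'}]$ together with the closure $^\kappa M[G_{\lambda'}] \subseteq M[G_{\lambda'}]$ inherited from $^\kappa M \subseteq M$ in $V$. Finally, $H'$ is the filter generated by $j``G_{[\kappa,\delta)}$: by Proposition \ref{prop:lift-width} the lifted embedding still has width $\leq\kappa$, and $\dot{\p}_{[\kappa,\delta)}$ is $\kappa^+$-distributive, so Proposition \ref{thm:transering-generic} makes $H'$ an $M[G_{\lambda'}*H]$-generic filter for $j(\dot{\p}_{[\kappa,\delta)})$.

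Given $j^*$, the verification of strongness is routine. First, $V_{\lambda'}^V \subseteq M$ and $G_{\lambda'} \in M[G^*]$, combined with the $\lambda'$-closure of $\p_{[\lambda',\delta)}$ (which adds no sets of rank below $\lambda'$), give $V_{\lambda'}^{V[G]} \subseteq M[G^*]$. Second, the identity $\dot A \cap V_{\lambda'}^V = j(\dot A) \cap V_{\lambda'}^V$ translates on evaluation over the corresponding generic fragments to $A \cap V_{\lambda'}^{V[G]} = j^*(A) \cap V_{\lambda'}^{V[G]}$. Since $\lambda'$ was arbitrary above $\lambda$, $\kappa$ is $\lambda$-strong for $A$ in $V[G]$, as required. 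The principal technical difficulty I anticipate is the diagonalisation step building $H$: verifying the antichain-counting hypothesis of Proposition \ref{thm:diagonalisation} for $j(\p_\kappa)_{[\lambda',j(\kappa))}$ requires exploiting the short-extender representation of elements of $M$ (each such antichain has the form $j(f)(a)$ with $f \in V$ and $a \in V_{\lambda'}$) together with the standard cardinal-arithmetic assumptions implicit in this style of preservation argument.
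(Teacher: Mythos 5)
Your overall strategy is the paper's: take a non-Woodin $\kappa$ from Claim \ref{claim:strong-not-woodin}, lift a $(\kappa,\lambda')$-extender embedding through $\p$ by splitting $j(\p)$ into the common part $\p_{\lambda'}$, a middle tail handled by diagonalisation, and a top part handled by transferring $j``G$. The gap is exactly at the step you flag as a ``technical difficulty'': the hypotheses of Proposition \ref{thm:diagonalisation} genuinely fail for $\p_{tail}:=j(\p_\kappa)_{[\lambda',j(\kappa))}$ over $M[G_{\lambda'}]$, and no cardinal arithmetic repairs this. Since $M=\{j(f)(a)\mid a\in[\lambda']^{<\omega},\,f\in V\}$, the number of maximal antichains of $\p_{tail}$ in $M[G_{\lambda'}]$, counted in $V[G_{\lambda'}]$, is $\kappa^+\cdot\lambda'=\lambda'$ (and there really are $\lambda'$-many distinct dense sets to meet, e.g.\ one per stage in $[\lambda',j(\kappa))$, and $|j(\kappa)|^V=\lambda'$). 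Meeting $\lambda'$-many dense sets by a strategic play would require ${}^{<\lambda'}M\subseteq M$, which a short $(\kappa,\lambda')$-extender embedding does not give. The paper's resolution, which your proposal is missing, is a two-stage argument: first diagonalise against the hull $X[G_{\lambda'}]$ where $X=\{j(f)(\kappa,\lambda')\mid f:[\kappa]^2\to V,\ f\in V\}$ --- here only the functions $f$ vary, so under GCH there are only $\kappa^+$-many antichain names and Proposition \ref{thm:diagonalisation} applies; then upgrade $X[G_{\lambda'}]$-genericity to full $M[G_{\lambda'}]$-genericity by observing that for any dense open $D=(j(f_D)(a))_{G_{\lambda'}}\in M[G_{\lambda'}]$, the set $D'=\bigcap\{(j(f_D)(b))_{G_{\lambda'}}\mid b\in[\lambda']^{<\omega},\ j(f_D)(b)$ names a dense open set$\}$ is an intersection of at most $\lambda'<j(\kappa)$ dense open sets, hence dense by the $j(\kappa)$-distributivity of $\p_{tail}$, is contained in $D$, and is definable in $X[G_{\lambda'}]$ from $j(f_D)$. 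Without this (or an equivalent device) the construction of $H$ does not go through.

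Two smaller points. First, you prove a stronger statement than the paper needs by allowing $A\in V[G]$ and passing to a name $\dot A$; the paper fixes $A\in V$, which is all that is used later. Your transfer ``$\dot A\cap V_{\lambda'}=j(\dot A)\cap V_{\lambda'}$ implies $A\cap V_{\lambda'}^{V[G]}=j^*(A)\cap V_{\lambda'}^{V[G]}$'' is not automatic for an arbitrary name: membership of rank-${<}\lambda'$ elements in $A$ need not be decided by the rank-${<}\lambda'$ part of $\dot A$ unless the name is chosen locally (so that $\dot A\cap V_\mu$ is a $\p_\mu$-name for $A\cap V_\mu$ at the relevant $\mu$). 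For a ground-model $A$ this issue disappears, as in the paper. Second, you should also arrange $\lambda'$ not Woodin and $\p_{\lambda'}\subseteq V_{\lambda'}$ when choosing it, as the paper does, to guarantee cleanly that $\p\cap V_{\lambda'}=j(\p)\cap V_{\lambda'}$ and that the factorisations behave as claimed; this is a minor bookkeeping matter compared with the diagonalisation gap.
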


\begin{proof}
Fix $A\subseteq V_\delta$ in $V$ and using Claim \ref{claim:strong-not-woodin}, let $\kappa<\delta$ be a cardinal which is $\lessd$-strong for $A$ and not Woodin. 

Pick $\lambda>\kappa$ such that $\lambda$ is inaccessible, $\p_\lambda\subseteq V_\lambda$ and $\lambda$ is not Woodin. Let $j:V\to M$ be a $\lambda$-strongness for $A$ embedding with $\crit(j)=\kappa$, $j(\kappa)>\lambda$ and $A\cap V_\lambda=j(A)\cap V_\lambda$. By our choice of $\lambda$, it is the case that $\p\cap V_\lambda=j(\p)\cap V_\lambda$. Moreover, we can assume that $j$ is an extender embedding so that by \ref{prop:closure-extender}, ${}^\kappa M\subseteq M$. Since $\p_\lambda\subseteq V_\lambda$ and $\lambda$ is inaccessible in both $V$ and $M$, it follows that the first $\lambda$-stages of $j(\p)$ are the same as those of $\p$. 

To lift $j$ through $\p$, we factorise it as $\p_\kappa\ast \dotp_{>\kappa}$, where $\dotp_{>\kappa}$ is a name for the stages greater than $\kappa$, noting that there is no forcing at $\kappa$. We start by lifting $j$ through $\p_\kappa$. Using the previous fact, $j(\p_\kappa)$ can be factorised as as $\p_\lambda\ast \dotp_{tail}$. We can use $G_\lambda$ as an $M$-generic filter for $\p_\lambda$ and we need to construct an $M[G_\lambda]$-generic filter for $\p_{tail}:=(\dotp_{tail})_{G_\lambda}$. By the definition of $\p$, it follows that $\p_{tail}$ is (much more than) $\kappa^+$-strategically closed in $M[G_\lambda]$. Since $\kappa$ is Mahlo and we are using Easton support, $\p_\kappa$ has the $\kappa$-c.c. and so, $V[G_\kappa]\models {}^\kappa M[G_\kappa]\subseteq M[G_\kappa]$. Since there is no forcing at stage $\kappa$, the stages of $\p_\lambda$ above $\kappa$ are $\kappa^+$-distributive in both $V[G_{\kappa}]$ and $M[G_{\kappa}]$. By standard arguments we have $V[G_\lambda]\models {}^\kappa M[G_\lambda]\subseteq M[G_\lambda]$. 

In order to construct an $M[G_\lambda]$-generic filter for $\p_{tail}=(\dotp_{tail})_{G_\lambda}$, we consider the structure 
$$X=\{j(f)(\kappa,\lambda)\mid f:[\kappa]^2\to V,f\in V\}.$$
With standard arguments it can be shown that $X$ is an elementary substructure of $M$ that contains the range of $j$ and that $V\models {}^\kappa X \subseteq X$. Also, $\kappa,\lambda\in X$ and so, $\p_\lambda,\dotp_{tail}\in X$. If we form $X[G_\lambda]$\footnote{By $X[G_\lambda]$ we denote the interpretation of all $\p_\lambda$-names in $X$ under $G_\lambda$.}, then it can be shown that $X[G_\lambda]$ is an elementary substructure of $M[G_\lambda]$ and that $V[G_\lambda]\models {}^\kappa X[G_\lambda]\subseteq X[G_\lambda]$. The point of using $X[G_\lambda]$ is that every name in $X$ for an antichain in $\dotp_{tail}$ has the form $j(f)(\kappa,\lambda)$ for some function $f:[\kappa]^2\to V_{\kappa+1}$. Hence, by our assumption of GCH there are at most $\kappa^+$-many maximal antichains of $\p_{tail}$ in $X[G_\lambda]$, as counted in $V[G_\lambda]$. Thus, the conditions of \ref{thm:diagonalisation} hold and we can construct in $V[G_\lambda]$ an $X[G_\lambda]$-generic filter $H_1\subseteq \p_{tail}$. 

Note that $H_1$ is also $M[G_\lambda]$-generic for $\p_{tail}$. To see this, let $D\subseteq \p_{tail}$ be an open dense set in $M[G_\lambda]$. Then, there is a $\p_\lambda$-name $\dot{D}\in M$ for $D$ and using the extender representation of $j$, we can write $\dot{D}$ as $j(f_D)(a)$, for some $a\in [\lambda]^{<\omega}$, $f_D:[\kappa]^{|a|}\to V$. Consider the set
\begin{align*}
D'=\bigcap\{(j(f_D)(b))_{G_\lambda}\mid b\in [\lambda]^{<\omega}, j(f_D)(b)\text{ is a }\p_\lambda\text{-name for} & \text{ an open dense} \\
 & \text{ subset of }\p_{tail}\}.
\end{align*}
$D'$ is well-defined since $j(f_D)(a)$ is in the set above. Also, $D'$ is definable from $\lambda$, $G_\lambda$, $\p_{tail}$ and $j(f_D)$ and hence definable in $X$. Since $\p_{tail}$ is $j(\kappa)$-distributive and $D'$ is the intersection of at most $\lambda$-many open dense sets, $D'$ is an open dense set contained in $D$. Since $H_1$ intersects $D'$ it also intersects $D$, therefore it is $M[G_\lambda]$-generic. Since $j``G_\kappa=G_\kappa\subseteq G_\lambda\ast H_1$, we can lift $j$ to $j:V[G_\kappa]\to M[j(G_\kappa)]$, where $j(G_\kappa)=G_\lambda \ast H_1$.

To further lift $j$ though $\p_{>\kappa}:=(\dotp_{>\kappa})_{G_\kappa}$, note that since there is no forcing at $\kappa$, $\p_{>\kappa}$ is $\kappa^+$-strategically closed in $V[G_\kappa]$. Let $G_{>\kappa}$ be the part of $G$ corresponding to $\p_{>\kappa}$. As $j$ is an extender embedding, it has width $\leq\kappa$ and by \ref{thm:transering-generic}, the filter generated by $j``G_{>\kappa}$ is $M[j(G_\kappa)]$-generic for $j(\p_{>\kappa})$. Hence, we can lift $j:V[G]\to M[j(G)]$, where $j(G)=G_\lambda\ast H_1\ast H_2$. 

Note that $(V_\lambda)^{V[G_\lambda]}=V_\lambda[G_\lambda]\subseteq M[G_\lambda]\subseteq M[j(G)]$, thus $j$ is a $\lambda$-strongness embedding. Moreover, $A\cap V_\lambda=j(A)\cap V_\lambda$ because $A\in V$ and $j\restriction V$ had the same property. Since $\lambda$ can be chosen arbitarily large below $\delta$, we showed that $\kappa$ is $\lessd$-strong for $A$ in $V[G]$.
\end{proof}

\begin{claim}\label{claim:strong-sc-v}
In $V[G]$, for every $A\subseteq (V_\delta)^V$, there is a cardinal $\kappa<\delta$ which both $\lessd$-strongly compact and $\lessd$-strong for $A$.
\end{claim}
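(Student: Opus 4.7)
The plan is to exhibit a single $\kappa<\delta$ that witnesses both properties in $V[G]$. Fix $A\subseteq (V_\delta)^V$ and, by Claim~\ref{claim:strong-not-woodin}, let $\kappa<\delta$ be a cardinal which in $V$ is simultaneously $\lessd$-strongly compact, $\lessd$-strong for $A$, and not Woodin. Applying the proof of Claim~\ref{claim:strong-v} verbatim to this $\kappa$ already yields that $\kappa$ is $\lessd$-strong for $A$ in $V[G]$, so the entire task reduces to showing $\kappa$ remains $\lessd$-strongly compact in $V[G]$; the two properties combined then deliver the claim.

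To verify the strong compactness half, fix $\lambda\in(\kappa,\delta)$, which we may take to be inaccessible and non-Woodin in $V$ with $\p_\lambda\subseteq V_\lambda$. Since $\kappa$ is both $\lambda$-strong and $\lambda$-strongly compact in $V$, Proposition~\ref{prop:sc-str} together with Remark~\ref{rmrk:full-covering} produces an embedding $j=j_2\circ j_1\colon V\to M$ with $\crit(j)=\kappa$, $V_\lambda\subseteq M$ and the full $\lambda$-covering property, where $j_1\colon V\to M_1$ is a $(\kappa,\mu)$-extender embedding for an inaccessible $\mu$ chosen with $\cf(\mu)>\kappa$, and $j_2\colon M_1\to M$ is an ultrapower of $M_1$ by a fine $M_1$-ultrafilter. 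Proposition~\ref{prop:closure-extender} gives ${}^\kappa M_1\subseteq M_1$, and this closure descends to $M$: any $\kappa$-sequence $\langle [f_\alpha]:\alpha<\kappa\rangle$ of elements of $M$ is represented by the single $M_1$-function $x\mapsto\langle f_\alpha(x):\alpha<\kappa\rangle$, so ${}^\kappa M\subseteq M$. With ${}^\kappa M\subseteq M$, $V_\lambda\subseteq M$ and $\p_\lambda\subseteq V_\lambda$, the lifting recipe of Claim~\ref{claim:strong-v} transfers almost verbatim: factor $\p=\p_\kappa\ast\dotp_{>\kappa}$ (no forcing at $\kappa$), factor $j(\p_\kappa)=\p_\lambda\ast\dot\p_{tail}$ in $M$ using $\p_\lambda\subseteq V_\lambda$ and inaccessibility of $\lambda$, use $G_\lambda$ as the generic for the first factor, build an $M[G_\lambda]$-generic $H_1\subseteq\p_{tail}$ by the seed-counting diagonalisation (Proposition~\ref{thm:diagonalisation}) inside a small $X[G_\lambda]\prec M[G_\lambda]$ of size $\kappa^+$, and transfer the remaining tail via Proposition~\ref{thm:transering-generic}. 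The lifted $j^+\colon V[G]\to M[j(G)]$ inherits the weak $\lambda$-covering property: if $s\in M$ covers $j``\lambda$ with $|s|^M<j(\kappa)$, the same $s$ witnesses covering in $M[j(G)]$. Thus $\kappa$ is $\lambda$-strongly compact in $V[G]$, and since $\lambda$ was arbitrary below $\delta$, $\kappa$ is $\lessd$-strongly compact there.

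The principal technical hurdle is the transferring step: unlike the pure extender embedding used in Claim~\ref{claim:strong-v}, the composite $j$ has width strictly larger than $\kappa$, whereas $\p_{>\kappa}$ is only guaranteed $\kappa^+$-strategically closed in $V[G_\kappa]$, so Proposition~\ref{thm:transering-generic} does not apply directly to all of $\p_{>\kappa}$. The natural remedy is to lift $j_1$ and $j_2$ in sequence rather than $j$ as a whole: $j_1$ is extender-based and its lift falls squarely within the template of Claim~\ref{claim:strong-v}, while $j_2$ has critical point $j_1(\kappa)>\lambda$, so it acts only on the stages of the iteration sitting above $\lambda$, where strategic closure is abundant and a master-condition/term-forcing argument supplies the required tail generic. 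Composing the two lifts yields $j^+$, and the covering property passes to $V[G]$ as described, completing the proof.
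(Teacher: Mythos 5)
Your reduction of the claim to preserving the $\lessd$-strong compactness of a single $\kappa$ (with the strongness-for-$A$ half delegated to Claim~\ref{claim:strong-v}) matches the paper's opening move, but the way you then try to preserve strong compactness has a genuine gap, and it sits exactly at the point you flag as the ``principal technical hurdle.'' The composite $j=j_2\circ j_1$ from Proposition~\ref{prop:sc-str}/Remark~\ref{rmrk:full-covering} has $\crit(j_2)=j_1(\kappa)$ and width roughly $(j_1(\lambda)^{<j_1(\kappa)})^{M_1}$, so neither of the paper's two lifting tools applies to the portion of $j(\p)$ that $j_2$ moves: Proposition~\ref{thm:diagonalisation} requires at most $\kappa^+$ many maximal antichains, which fails because elements of $M$ are no longer of the form $j(f)(a)$ with $|\dom(f)|\leq\kappa$, and Proposition~\ref{thm:transering-generic} requires distributivity beyond the width of the embedding, which fails for the stages between $j_1(\kappa)$ and that width. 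Declaring that ``a master-condition/term-forcing argument supplies the required tail generic'' does not resolve this: the real obstruction, which the paper confronts head-on, is that the generic for the tail $j_2(\p_{tail})$ need not exist in $V[G]$ at all. The paper forces over $V[G]$ to add such a generic $H_4$ below a carefully built master condition, lifts $j$ only in $V[G][H_4]$, and then argues that the derived fine ultrafilter on $\powerset_\kappa\alpha$ nevertheless lies in $V[G]$ because the tail forcing does not change $V_\lambda$. Without this step (or a substitute for it) your argument never produces a strong compactness witness living in $V[G]$.

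The paper also sets up the embedding differently, and the difference is what makes its lift feasible. It does not attempt to preserve the strong compactness of the Menas-style cardinal directly; it shows that every $\lessd$-supercompact $\kappa$ below the chosen $\kappa_0$ remains $\lessd$-strongly compact and then reapplies Menas's argument in $V[G]$, using that $\kappa_0$ is still a measurable limit of $\lessd$-strongly compact cardinals there. For such $\kappa$ it composes a $\lambda$-supercompactness embedding $j_1$ with a $(\kappa,\lambda)$-extender embedding $j_2$ of $M$, \emph{both with critical point $\kappa$}: the $\lambda$-closure of $M$ makes the antichain counting and the master-condition constructions go through inside $M[G_\lambda]$, and $j_2$ has width $\leq\kappa$, which legitimises the transfer step for the stages in $(\kappa,\lambda)$. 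You should either adopt this decomposition or explain concretely how to obtain the tail generic for your composite embedding; as written, the second half of your proof does not go through.
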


\begin{proof}
If we fix $A\subseteq V_\delta$, we can use the proofs of Claims \ref{claim:strong-not-woodin} and \ref{claim:strong-v} to find a cardinal $\kappa_0<\delta$, which is $\lessd$-strong for $A$ and a limit of $\lessd$-supercompact cardinals, such that $\kappa_0$ remains $\lessd$-strong for $A$ in $V[G]$. So, all we need to show is that $\kappa_0$ remains $\lessd$-strongly compact in $V[G]$. We do this by showing that every $\lessd$-supercompact cardinal below $\kappa_0$ remains $\lessd$-strongly compact.

Let $\kappa<\kappa_0$ be a $\lessd$-supercompact and fix $\lambda\in (2^\kappa,\delta)$ such that $\lambda$ is not Woodin and $\p_\lambda\subseteq V_\lambda$. Let $j_1:V\to M$ be a $\lambda$-supercompactness embedding with $\crit(j_1)=\kappa$. By standard arguments, $\kappa$ is ${<}j_1(\kappa)$-strong in $M$ and, so there is an elementary embedding $j_2:M\to N$ with $\crit(j_2)=\kappa$, $j_2(\kappa)>\lambda$ and $V_{\lambda}\subseteq N$. We can choose $j_2$ so that is given by  a $(\kappa,\lambda)$-extender and such that $\kappa$ is not $\lambda$-strong in $N$.

Now. if we let $j:=j_2\circ j_1\colon V\to N$ then $j$ is a $\lambda$-strong compactness embedding. To see this, let $X\subseteq N$ be a set of size at most $\lambda$. Since $M$ is closed under $\lambda$-sequences, $j_1``X\in M$ and $|j_1``X|^M<j_1(\kappa)$. By elementarity, $|j_2(j_1``X)|^N<j_2(j_1(\kappa))=j(\kappa)$ and clearly $j``X\subseteq j_2(j_1``X)$, so it is the required cover.

We aim to lift $j$ through $\p$ and for this end, we factorise $\p$ as $\p_\kappa\ast \dotq_\kappa \ast\dotp_{\kl}*\dotp_{>\lambda}$, where $\dotp_{\kl}$ is a $\p_\kappa\ast \dotq_\kappa$-name for the stages in the interval $(\kappa,\lambda)$ and $\dotp_{>\lambda}$ is a name for the later stages. Note that the $\lambda$-stage is trivial.

By the properties of $j_1$, the first $\lambda$-stages of $j_1(\p)$ are the same as those of $\p$. So, we can factorise $j_1(\p_\lambda)$ as $\p_\lambda\ast\dotp_{tail}\cong \p_\kappa\ast \dotq_\kappa\ast \dotp_{\kl}\ast\dotp_{tail}$, where $\dotp_{tail}$ is a name for the stages in $(\lambda,j_1(\delta))$. By elementarity, 
$$j(\p_\lambda)=j_2(j_1(\p))\cong j_2(\p_\kappa)\ast j_2(\dotq_\kappa)\ast  j_2(\dotp_{\kl})\ast j_2(\dotp_{tail}).$$

\subsection*{Constructing a generic for $j_2(\p_\kappa)$.} 
By the properties of $j_2$ it follows that the first $\lambda$-stages of $j_2(\p_\kappa)$ are the same as those of $\p$ and so, we can factorise $j_2(\p_\kappa)$ as $\p_\lambda*\dotq_{tail}$, where $\dotq_{tail}$ is a name for the stages in $(\lambda,j_2(\kappa))$. As $G_\lambda$ is $V$-generic, it is also $N$-generic so we can form $N[G_\lambda]$. In $N[G_\lambda]$, $\q_{tail}=(\dotq_{tail})_{G_\lambda}$ is (much more than) $\kappa^+$-strategically closed. Since $\kappa$ is Mahlo and we are using Easton support, $\p_\kappa$ has the $\kappa$-c.c. Also $\q_\kappa$ has the $\kappa^+$-c.c. in both $V[G_\kappa]$ and $M[G_\kappa]$ and $\p_{\kl}$ is $\kappa^+$-distributive in both $V[G_{\kappa+1}]$ and $M[G_{\kappa+1}]$. Hence, by the standard arguments we have $M[G_\lambda]\models {}^\kappa N[G_\lambda]\subseteq N[G_\lambda]$. Now, let 
$$X=\{j(f)(\kappa,\lambda)\mid f:[\kappa]^2,f\in V\}.$$
Using exactly the same arguments as in Claim \ref{claim:strong-v}, we can construct in $M[G_\lambda]$ an $X[G_\lambda]$-generic filter $H_1$ for $\dotq_{tail}$, which is also $N[G_\lambda]$-generic. Since $j``G_\kappa=G_\kappa\subseteq G_\lambda\ast H_1$, we can lift $j_2$ to $j_2:M[G_\kappa]\to N[j_2(G_\kappa)]$, where $j_2(G_\kappa)=G_\lambda \ast H_1$.

\medskip

\subsection*{Constructing a generic for $j_2(\q_\kappa)$.}
We need an $N[j_2(G_\kappa)]$-generic $H_2$ for $j_2(\q_\kappa)$ such that if $g$ is the part of $G$ that corresponds to $\q_\kappa$, $j_2``g\subseteq H_2$. 
If $C_\kappa=\bigcup g$ is the generic club added to $\kappa$ by $\q_\kappa$, then $C_\kappa$ consists of cardinals $\alpha<\kappa$ which are not ${<}\kappa$-strong in $V$. By elementarity,, $j(\alpha)=\alpha$ is not ${<}j_2(\kappa)$-strong in $N$. Also, $j_2$ was chosen so that $\kappa$ is not $\lambda$-strong in $N$, hence $q=C_\kappa\cup \{\kappa\}$ is a condition in $j_2(\q_\kappa)$.

Now, the structure $X$ comes in use again. In the previous argument we formed $X[G_\lambda][H_1]$. Since $M[G_\lambda]\models {}^\kappa X[G_\lambda]\subseteq X[G_\lambda]$ and $H_1$ was defined in $M[G_\lambda]$, it follows that $M[G_\lambda]\models {}^\kappa X[G_\lambda][H_1]\subseteq X[G_\lambda][H_1]$. Also, $j_2(\q_\kappa), q \in X[G_\lambda][H_1]$ and since $j_2(\q_\kappa)$ has size $j_2(\kappa)$, every dense open subset of $j_2(\q_\kappa)$ has a name of the form $j(f)(\kappa,\lambda)$ for some $f:[\kappa]^2\to V_{\kappa+1}$. Using GCH, it follows that there are at most $\kappa^+$-many maximal antichains of $j_2(\q_\kappa)$ in $X[G_\lambda][H_1]$, as counted in $M[G_\lambda]$ and so, we can apply \ref{thm:diagonalisation} to construct an $X[G_\lambda][H_1]$-generic filter $H_2$ below $q$. To show that $H_2$ is also $N[j_2(G_\kappa)]$-generic, let $D\in N[j_2(G_\kappa)]$ be an arbitrary open dense subset of $j_2(\q_\kappa)$. Using the extender representation of $j_2$, we can write $D$ as $(j_2(f_D)(a))_{G_\lambda \ast H_1)}$, for some $a\in [\lambda]^{<\omega}$, $f_D:[\kappa]^{|a|}\to M$, $f_D\in M$. Let
\begin{align*}
D'=\bigcap \{(j_2(f_D)(b))_{G_\lambda\ast H_1}\mid b\in [\lambda]^{<\omega}, j_2(f_D)(b) & \text{ is a } j_2(\p_\kappa)\text{-name for an open} \\ 
& \text{dense subset of }j_2(\q_\kappa)\}.
\end{align*}
$D'$ is well-defined because $j_2(f_D)(a)$ is in the set above. Also, $D'$ is definable from $\lambda$, $j_2(\p_\kappa)$, $G_\lambda\ast H_1$ and $j_2(f_D)$ and so, it is definable in $X[G_\lambda][H_1]$. Also, as $j_2(\q_\kappa)$ is $j_2(\kappa)$-distributive and $D'$ is the intersection of at most $\lambda$-many open dense sets, $D'$ is an open dense set contained in $D$. Since $H_2$ intersects $D'$, it also intersects $D$, therefore $H_2$ is $N[j_2(G_\kappa)]$-generic. Using $H_2$, we can lift $j_2$ to $j_2:M[G_{\kappa+1}]\to N[j_2(G_{\kappa+1})]$, where $j_2(G_{\kappa+1})=G_\lambda\ast H_1\ast H_2$.

\medskip

\subsection*{Constructing a generic for $j_2(\p_{\kl})$.}
The embedding $j_2:M\to N$ is generated by a $(\kappa,\lambda)$-extender, so it has width $\leq\kappa$. By \ref{prop:lift-width}, $j_2:M[G_{\kappa+1}]\to N[j_2(G_{\kappa+1})]$, also has width $\leq\kappa$. Since the forcing $\p_{\kl}$ is $\kappa^+$-strategically closed, if we let $G_{\kl}$ be the part of $G$ that corresponds to $\p_{\kl}$, \ref{thm:transering-generic} implies that the filter $H_3$ generated by $j_2``G_{\kl}$ is $N[j_2(G_\kappa)][H_2]$-generic for $j_2(\p_{\kl})$. It follows that we can lift $j_2$ to $j_2:M[G_{\lambda}]\to N[j_2(G_\lambda)]$, where $j_2(G_\lambda)=G_\lambda\ast H_1\ast H_2\ast H_3$.

\medskip

\subsection*{Constructing a generic for $j_2(\p_{tail})$.}\footnote{The author thanks Yair Hayut for sharing and discussing with him the techniques used in this construction.}
We begin by showing that there is a master condition in $j_2(\p_{tail})$, below which we intend to construct the required generic.

\begin{claim}
There is $q\in j_2(\p_{tail})$ such that $q\leq j(p)$ for all $p\in G_{[\kappa,\lambda)}$.
\end{claim}

\begin{proof}
Since $\p_\lambda$ has the $\lambda$-c.c. and $V\models {}^\lambda M\subseteq M$, the standard arguments show that 
$$V[G_\lambda]\models {}^\lambda M[G_\lambda]\subseteq M[G_\lambda].$$
Hence, the collection $S:=\{j(p)\mid p\in G_\lambda\}$ is in $M[G_\lambda]$ and so, $j_2(S)\in N[k(G_\lambda)]$. Note that $j``G_\lambda\subseteq j_2(S)$ and $j_2(S)$ has size $j_2(\lambda)$ in $N$. 

Roughly, to define $q$ we will consider the coordinate-wise union of conditions in $j_2(S)$, adding their supremum at the top. More precisely, we define in $N[j_2(G_\lambda)]$ a sequence $q$ with domain $(j_2(\lambda),j(\lambda))$ as follows. For each $\alpha\in\dom(q)$, $q(\alpha)$ will be a name for the trivial condition of the $\alpha$-stage of $j_2(\p_{tail})$, unless  $\alpha\in\bigcup_{p\in j_2(S)} \supp(p)$. In this case, using the fact that by elementarity $j_2(S)$ is directed and has size $j_2(\lambda)<\alpha$, we have 
$$\Vdash_{j_2(\p_{tail})\restriction \alpha}\bigcup_{r\in j_2(S)} r(\alpha) \text{ is a bounded subset of } \alpha.$$
Since $j_2(S)$ has size $j_2(\lambda)$ and $\alpha>j_2(\lambda)$ it follows that 
$$\Vdash_{j_2(\p_{tail})\restriction \alpha} \exists x\in \alpha[x=\sup( \bigcup_{r\in j_2(S)} r(\alpha))].$$
By the maximality principle, we can fix a name $\tau_\alpha$ for $x$ and we set 
$$q(\alpha)=\bigcup_{r\in j_2(S)}r(\alpha)\cup \{\tau_\alpha\}.$$
We need to show that $q\in j_2(\p_{tail})$. The support of $q$ is contained in $\bigcup_{r\in j_2(S)}\supp(r)$ and for each $r\in j_2(S)$, $\supp(r)\cap (j_2(\lambda),j(\lambda))$ is an Easton set. As $j_2(S)$ has size $j_2(\lambda)$, it follows that $q$ has Easton support. We also need to show that for all $\alpha\in \supp(q)$, $q(\alpha)$ is a name for a condition in the $\alpha$-stage of $j_2(\p_{tail})$. For each $r\in j_2(S)$, $r(\alpha)$ is a name for a closed bounded subset of $\alpha$ consisting of cardinals which are not ${<}\alpha$-strong. Since $j_2(S)$ is directed, it follows that $\bigcup_{r\in j_2(S)}r(\alpha)$ is forced to be a closed set of singular cardinals, and unbounded in its supremum. So, it remains to show that $\tau_\alpha$ is also forced to be non-${<}\alpha$-strong. To see this, note that by genericity for each $\xi\in (\lambda,j_1(\lambda))$, the supremum of $\bigcup_{r\in S}r(\xi)$ is forced to be greater than $\lambda$. By elementarity, the supremum of $\bigcup_{r\in j_2(S)} r(\alpha)$ is forced to be greater than $j_2(\lambda)$. As $j_2(S)$ has size $j_2(\lambda)$, it follows that $\tau_\alpha$ is a name for a singular cardinal, which in particular is not ${<}\alpha$-strong.

Finally, as $j``G_\lambda\subseteq j_2(S)$, $q$ extends all conditions of the form $(r\restriction (j_2(\lambda),j(\lambda)))_{j_2(G_\lambda)}$, for $r\in j_2(S)$. Thus, $q$ is the required master condition.
\end{proof}

Now that we have the master condition, we force over $V[G]$ to add an $N[j_2(G_\lambda)]$-generic filter $H_4\subseteq j_2(\p_{tail})$ such that $q\in H_4$. The choice of $q$ was so that $q\leq j(p)$ for all $p\in G_{\kl}$. It follows that we can lift $j$ to $j:V[G_\lambda]\to N[j(G_\lambda)]$, where $j(G_\lambda)=G_\lambda\ast H_1\ast H_2\ast H_3\ast H_4$. Obviously, $j$ satisfies the weak $\lambda$-covering property, so for each $\alpha<\lambda$ we can derive from $j$ a fine ultrafilter $U$ on $(\powerset_\kappa\alpha)^{V[G][H_4]}$. We claim that $U\in V[G]$. To see this, note that the stages of $j(\p)$ above $\lambda$ are $\lambda^+$-strategically closed in $N[G_\lambda]$ and so, $(V_\lambda)^{N[j(G_\lambda)]}=(V_\lambda)^{N[G_\lambda]}$. Moreover, $j_2$ was assumed to be a $\lambda$-strongness embedding, so we have $(V_\lambda)^N=(V_\lambda)^M=V_\lambda$ and since the first $\lambda$-stages of $\p$ and $j(\p)$ are the same, $(V_\lambda)^{V[G_\lambda]}=(V_\lambda)^{N[G_\lambda]}$. Hence, forcing with $j_2(\p_{tail})$ over $V[G]$ cannot change $(V_\lambda)^{V[G]}$ and so,
$$(V_\lambda)^{V[G][H_4]}=(V_\lambda)^{V[G]}=V_\lambda[G_\lambda]\subseteq V[G].$$
It follows that $(\powerset_\kappa \alpha)^{V[G][H_4]}=(\powerset_\kappa\alpha)^{V[G]}$ and that $U\in V[G]$. Since $\alpha$ was chosen arbitrarily below $\lambda$, $\kappa$ is ${<}\lambda$-strongly compact in $V[G]$. But also, $\lambda$ can be chosen arbitrarily large below $\delta$ so we have shown that $\kappa$ is $\lessd$-strongly compact in $V[G]$ and the proof is complete.
\end{proof}

To finish the proof, we will show that (2) of \ref{thm:wfsc-definable} holds in $V[G]$, so fix a function $f:\delta\to\delta$ in $V[G]$. Since we use Easton support and $\delta$ is Mahlo, $\p_\delta$ is $\delta$-c.c. and in particular, ${}^\delta \delta$-bounding. This means that there is a function $F:\delta\to\delta$ in $V$ such that for all $\alpha<\delta$, $f(\alpha)<F(\alpha)$. By Claim \ref{claim:strong-sc-v}, we can find in $V[G]$ a cardinal $\kappa<\delta$ which is both $\lessd$-strong for $F$ and $\lessd$-strongly compact. Pick $\lambda>F(\kappa)$ and using \ref{prop:sc-str}, let $j:V[G]\to M$ be an embedding with $\crit(j)=\kappa$, $j(\kappa)>\lambda$, $F\cap V_\lambda=j(F)\cap V_\lambda$, satisfying the $\lambda$-covering property. Since $F(\kappa)<\lambda$, it follows that $j(F)(\kappa)<\lambda$ and thus, $j$ is an embedding which is $j(F)(\kappa)$-strong and $j(F)(\kappa)$-strongly compact. Moreover $\kappa$ is a closure point of $F$ and consequently, of $f$ too. Thus, we have shown that $\delta$ remains Woodin for strong compactnessin $V[G]$.

\medskip

\subsection{Proof of Theorem \ref{thm:first-vopenka}.} 
The first step is to use \ref{vopenka-ind} to show the following result.

\begin{thm}\label{thm:vopenka-eq}
Suppose $\delta$ is a Vop\v enka cardinal. There is a forcing extension inside which $\delta$ remains Vop\v enka, GCH holds, every ground model $\lessd$-supercompact cardinal is preserved and the only $\lessd$-strongly compact cardinals are either the $\lessd$-supercompact cardinals of $V$ or measurable limits of those. 
\end{thm}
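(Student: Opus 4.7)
The plan is to define a $\delta$-length Easton-support iteration $\p$ beginning with $\Add(\omega,1)$ and continuing stage by stage as follows. At each $\alpha<\delta$, set $\mu_\alpha = \sup\{\beta<\alpha \mid \beta \text{ is } \lessd\text{-supercompact in } V\}$ (with the convention $\mu_\alpha = \omega$ when this supremum is empty); if $\alpha$ is measurable in $V$, neither $\lessd$-supercompact in $V$ nor a limit of $\lessd$-supercompact cardinals in $V$ (so $\mu_\alpha < \alpha$), let $\dotq_\alpha$ name the poset of Section~2 that adds a non-reflecting stationary subset of $\alpha$ concentrated on ordinals of cofinality $\mu_\alpha$, and otherwise let $\dotq_\alpha$ be trivial. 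The initial $\Add(\omega,1)$-stage ensures $\p$ has a closure point at $\omega$, and we arrange (via a preliminary Vop\v enka-preserving GCH-forcing) that $V$ satisfies GCH, so that GCH is preserved in the generic extension $V[G]$.

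The first key verification is that $\delta$ remains Vop\v enka via Theorem~\ref{vopenka-ind}: each $\dotq_\alpha$ has size less than $\delta$; and for each $\alpha<\delta$, because $\delta$ is a limit of extendibles (hence of $\lessd$-supercompacts) we may pick $\kappa\in(\alpha,\delta)$ that is $\lessd$-supercompact in $V$, after which every stage $\beta>\kappa$ satisfies $\mu_\beta \geq \kappa > \alpha$ and so $\dotq_\beta$ is $\alpha$-directed closed. For preservation of a $\lessd$-supercompact $\kappa$ of $V$, note $\dotq_\kappa$ is trivial and $\p$ factors as $\p_\kappa \ast \dotp_{>\kappa}$ with $\dotp_{>\kappa}$ forced $\kappa$-directed closed. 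Given any $\lambda\in(\kappa,\delta)$ and a $\lambda$-supercompactness embedding $j\colon V \to M$, we factor $j(\p_\kappa) \cong \p_\kappa \ast \dotr$, where $\dotr$ is the segment of $j(\p)$ corresponding to $[\kappa,j(\kappa))$ in $M$; this is $\kappa^+$-directed closed in $M[G_\kappa]$, and since ${}^\lambda M[G_\kappa] \subseteq M[G_\kappa]$ and, by GCH, only $\kappa^+$ many maximal antichains of $\dotr$ live in a suitable elementary substructure of $M[G_\kappa]$, Proposition~\ref{thm:diagonalisation} constructs a generic $H$ in $V[G_\kappa]$; the lift then transfers through $\dotp_{>\kappa}$ via Proposition~\ref{thm:transering-generic} using the $\lambda^{<\kappa}$-bound on the width of $j$. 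Hence every ground-model $\lessd$-supercompact persists in $V[G]$, and by Menas every measurable limit of them remains $\lessd$-strongly compact.

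For the converse characterisation, suppose $\alpha<\delta$ is $\lessd$-strongly compact in $V[G]$, hence measurable in $V[G]$. The closure point of $\p$ at $\omega$ together with Theorem~\ref{thm:cov-approx} shows that $\alpha$ is already measurable in $V$. If $\alpha$ were neither $\lessd$-supercompact nor a limit of $\lessd$-supercompacts in $V$, the iteration would have added a non-reflecting stationary $S \subseteq \alpha$ at stage $\alpha$; the Easton-support tail $\p_{>\alpha}$ is an iteration of $\beta$-strategically-closed forcings for $\beta>\alpha$, hence $\alpha^+$-distributive in $V[G_{\leq\alpha}]$, so $S$ survives as a non-reflecting stationary set in $V[G]$, contradicting $\lessd$-strong compactness of $\alpha$ (which would force stationary reflection at every regular cofinality below $\alpha$). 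Therefore $\alpha$ must be a $\lessd$-supercompact of $V$ or a measurable limit of such.

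The crux of the argument is the supercompactness-preservation lifting in the middle paragraph, whose execution parallels the diagonalisation-plus-transfer pattern of Claims~\ref{claim:strong-v} and~\ref{claim:strong-sc-v}: the delicate points are producing the $M[G_\kappa]$-generic through the auxiliary elementary substructure and verifying that GCH yields the required antichain count.
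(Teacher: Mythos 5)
Your iteration places the non-reflecting stationary set \emph{at} each measurable $\alpha$ that is neither $\lessd$-supercompact nor a limit of such, and then tries to preserve a $\lessd$-supercompact $\kappa$ by factoring $j(\p_\kappa)\cong\p_\kappa\ast\dotr$ with $\dotr$ ``$\kappa^+$-directed closed in $M[G_\kappa]$''. That closure claim fails, and this is a genuine gap. Take $\kappa$ to be the least $\lessd$-supercompact and $j:V\to M$ a $\lambda$-supercompactness embedding. Then $\kappa$ is measurable in $M$, but $M$ cannot think $\kappa$ is ${<}j(\delta)$-supercompact or a limit of such (either would reflect by elementarity to give a $\lessd$-supercompact below $\kappa$ in $V$). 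So in $M$ the stage-$\kappa$ component of $j(\p)$ is \emph{nontrivial}: it adds a non-reflecting stationary subset of $\kappa$ concentrated on cofinality $\mu_\kappa^M<\kappa$ (indeed $\omega$ for the least $\kappa$). That poset is only $\mu_\kappa$-directed closed and $\kappa$-strategically closed; it is not $(\kappa+1)$-strategically closed (a strategy of length $\kappa+1$ would make the generic set non-stationary), so $\dotr$ is neither $\kappa^+$-directed closed nor ${<}\kappa^+$-strategically closed, and Proposition \ref{thm:diagonalisation} does not apply. There is also no master condition to fall back on. Worse, if the lift did exist with $M[j(G)]$ closed under $\kappa$-sequences in $V[G]$, the stage-$\kappa$ generic would be a genuinely non-reflecting stationary subset of $\kappa$ in $V[G]$, contradicting the weak compactness you are trying to preserve. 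So the preservation half of your argument collapses exactly where it matters.

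The paper's proof is built to avoid precisely this. Following Apter, it fixes a universal Laver function $f$ and makes the stage-$\alpha$ forcing depend on $f(\alpha)$, with the non-reflecting stationary set added to an auxiliary regular cardinal $\sigma>\alpha$ named by $f$ (e.g.\ a successor $\alpha^+$), concentrated on ordinals of cofinality $\gamma_\alpha=\sup$ of the $\lessd$-supercompacts below $\alpha$ --- not to the measurable $\alpha$ itself. Preservation of the $\lessd$-supercompacts then comes from the Laver indestructibility argument (one chooses $j$ so that $j(f)(\kappa)$ codes a convenient forcing, giving control over the stage-$\kappa$ component of $j(\p)$), and destruction of strong compactness of any other $\kappa$ comes not from killing its measurability but from a \L o\'s argument: $j(\p)$ forces a non-reflecting stationary set of cofinality-$\kappa_0$ ordinals at $\kappa_1^+$, so unboundedly many $\alpha^+<\kappa_1$ carry such sets in $V[G]$, contradicting $\alpha^+$-strong compactness of $\kappa$ via the stationary-reflection consequence of strong compactness (which concerns stationary subsets of cardinals \emph{above} $\kappa$ of small cofinality --- note your parenthetical ``stationary reflection at every regular cofinality below $\alpha$'' cites the wrong principle; a non-reflecting stationary subset of $\alpha$ itself contradicts weak compactness of $\alpha$, not its strong compactness). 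If you want to salvage your approach you would need to import the Laver preparation, at which point you essentially arrive at the paper's proof.
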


Let us temporarily take \ref{thm:vopenka-eq} for granted and see how \ref{thm:first-vopenka} is proved.

Let $V$ be the model induced by \ref{thm:vopenka-eq} and let $\p=\langle \p_\alpha,\dotq_\beta\mid \alpha\leq\delta,\beta<\delta\rangle$ be the following Easton support $\delta$-iteration. Let $\dotq_0$ be a name for $\Add(\omega,1)$. If $\p_\alpha$ has been defined and $\alpha$ was Woodin for strong compactness in $V$, then let $\dotq_\alpha$ name the forcing which shoots a club of singular cardinals below $\alpha$. Otherwise, let $\dotq_\alpha$ name the trivial forcing. Let $G\subseteq \p$ be a $V$-generic filter.

Firstly, we show that there are no Woodin for strong compactness cardinals below $\delta$ in $V[G]$. If $\alpha<\delta$ was Woodin for strong compactness in $V$ then the forcing adds a club of singular cardinals to $\alpha$, thus destroying its Mahloness and in particular, its Woodinised strong compactness. Also, by forcing with $\Add(\omega,1)$ at the first stage, we introduced a closure point at $\omega$. By \ref{thm:cov-approx} the forcing creates no new instances of strong compactness, thus there is no new Woodin for strong compactness cardinal in $V[G]$. 

It remains to show that $\delta$ remains Woodin for strong compactness. As in Claim \ref{claim:strong-not-woodin} in the proof of \ref{thm:first-woodin}, for each $A\subseteq V_\delta$ we can find $\kappa<\delta$ which is $\lessd$-strong for $A$ and not Woodin, and consequently not Woodin for strong compactness. Thus, all stages of $\p$ that are greater than or equal to $\kappa$ are forced to be $\kappa^+$-strategically closed. Using \ref{thm:woodin-indesructibility}, it follows that $\delta$ remains Woodin in $V[G]$.

Now we show that any $\lessd$-supercompact cardinal in $V$ which is not Woodin for strong compactness, remains $\lessd$-supercompact in $V[G]$. For instance, any $\lessd$-supercompact cardinal $\kappa<\delta$ which is not a limit of $\lessd$-supercompacts, is not Woodin for strong compactness ($\kappa$ has a bounded collection of $\lessd$-supercompact cardinals below it, so by the conclusion of \ref{thm:vopenka-eq} it has a bounded collection of $\lessd$-strongly compact cardinals below it and hence, it cannot be Woodin for strong compactness).

Fix such a $\kappa$ and let $\lambda>\kappa$ be a Mahlo cardinal which is not Woodin for strong compactness. Let $j:V\to M$ be a $\lambda$-supercompactness embedding with $\crit(j)=\kappa$. We are going to lift $j$ through $\p$. Since there is trivial forcing at stages $\kappa$ and $\lambda$, we can factorise $\p$ as $\p_\kappa\ast \dotp_{\kl}\ast \dotp_{>\lambda}$, where $\dotp_{\kl}$ is a name for the stages in $(\kappa,\lambda)$ and $\dotp_{>\lambda}$ is a name for the stages greater than $\lambda$.

Since $M$ is closed under $\lambda$-sequences, it has the same Woodin for strong compactness cardinals as $V$ up to $\lambda$. Also, $\lambda$ is not Woodin for strong compactness in $M$. Thus, it is the case that the first $\lambda$-stages of $j(\p_\kappa)$ are the same as those of $\p$ and we can write $j(\p_\kappa)$ as $\p_\lambda\ast \dotp_{tail}$. Using $G_\lambda$ as an $M$-generic filter, $\p_{tail}$ is a $\lambda^+$-strategically closed forcing in $M[G_\lambda]$ and the usual counting arguments, using GCH, show that it has at most $\lambda^+$-many maximal antichains. Also, $\lambda$ is Mahlo, $\p_\lambda\subseteq V_\lambda$ and we are using Easton support, so $\p_\lambda$ is $\lambda$-c.c. As $V\models {}^\lambda M\subseteq M$, it follows that $V[G_\lambda]\models {}^\lambda M[G_\lambda]\subseteq M[G_\lambda]$. Therefore, we can apply \ref{thm:diagonalisation} to construct in $V[G_\lambda]$ an $M[G_\lambda]$-generic filter $H_1\subseteq \p_{tail}$. Since $j``G_\kappa=G_\kappa\subseteq G_\lambda\ast H_1$, we can use Silver's criterion to lift $j$ to $j:V[G_\kappa]\to M[j(G_\kappa)]$, where $j(G_\kappa)=G_\lambda\ast H_1$.

To lift $j$ through $\p_{\kl}=(\dotp_{\kl})_{G_\kappa}$ we notice that since $H_1$ was constructed inside $V[G_\lambda]$, $M[j(G_\kappa)]$ is closed under $\lambda$-sequences in $V[G_\lambda]$. Thus, $j``G_{\kl}\in M[j(G_\kappa)]$, where $G_{\kl}$ is the part of $G$ corresponding to $\p_{\kl}$. We define by induction a sequence $q$ with $\dom(q)=(j(\kappa),j(\lambda))$ as follows. $q(\alpha)$ will be a name for the trivial condition in the $\alpha$-stage of $j(\p_{\kl})$, unless $\alpha\in \bigcup\{\supp(j(p))\mid p\in G_{\kl}\}$. In that case, let $H_\alpha$ be an $M[j_2(G_\kappa)]$-generic filter for $j(\p_{\kl})\restriction \alpha$ and consider the set, 
$$\bigcup\{(j(p)(\alpha))_{j_2(G_\kappa)\ast H_\alpha}\mid p\in G_{\kl}\}.$$
As a union of $\lambda$-many subsets of $\alpha>\lambda$, the above set is bounded and so, it forced by $j(\p_{\kl})\restriction \alpha$ that its supremum is some ordinal less than $\alpha$. Using the maximality principle, we can find a name $\tau_\alpha$ for the supremum and we let 
$$q(\alpha)=\bigcup_{p\in G_{\kl}}j(p)(\alpha) \cup \{\tau_\alpha\}.$$

We need to show that for all $\alpha\in \supp(q)$, $q(\alpha)$ is a name for a condition in the $\alpha$-stage of $j(\p_{\kl})$. For each $p\in G_{\kl}$, $p(\alpha)$ is a name for a closed bounded subset of $\alpha$ consisting of singular cardinals. Since $j``G_{\kl}$ is directed, it follows that $\bigcup_{p\in G_{\kl}}j(p)(\alpha)$ is forced to be a closed set of singular cardinals, and unbounded in its supremum. So, it remains to show that $\tau_\alpha$ is also forced to be singular. To see this, note that $\tau_\alpha$ is forced to be a supremum of a set of size $\lambda$, whose maximum by genericity is greater than $j(\kappa)>\lambda$. It follows that $\tau_\alpha$ is a name for a singular cardinal.

By the definition of $q$, we have $q\leq j(p)$ for all $p\in G_{\kl}$, i.e. it is a master condition. $j(\p_{\kl})$ is (more than) $\lambda^+$-strategically closed in $M[j(G_\kappa)]$ and by a counting argument, we can see that $j(\p_{\kl})$ has at most $\lambda^+$-many maximal antichains in $M[j(G_\kappa)]$, counted in $V[G_\lambda]$. Therefore, the conditions of \ref{thm:diagonalisation} hold and we can construct an $M[G_\lambda][H_1]$-generic filter $H_2\subseteq j(\p_{\kl})$ below $q$. By Silver's criterion we can lift $j$ to $j:V[G_\lambda]\to M[j(G_\lambda)$, where $j(G_\lambda)=G_\lambda\ast H_1\ast H_2$. 

Finally, $\p_{>\lambda}=(\dotp_{>\lambda})_{G_\lambda}$ is $\lambda^+$-distributive and since $j$ had width $\leq\lambda$ we can apply \ref{thm:transering-generic} to transfer $G_{>\lambda}$ to an $M[j(G_\lambda)]$-generic filter $H_3\subseteq j(\p_{>\lambda})$. Then we can lift $j$ to $j:V[G]\to M[j(G)]$, where $j(G)=G_\lambda\ast H_1\ast H_2\ast H_3$. Clearly $j``\lambda\in M[j(G)]$ and so, $j$ is a $\lambda$-supercompactness embedding in $V[G]$. As $\lambda$ can be chosen arbitrarily large, we have shown that $\kappa$ remains $\lessd$-supercompact in $V[G]$.

Therefore, $\delta$ remains a Woodin limit of $\lessd$-supercompact cardinals and it is the first such, since there are no Woodin for strong compactness cardinal below $\delta$ in $V[G]$. 

\begin{proof}[Proof of Theorem \ref{thm:vopenka-eq}]
We begin by forcing the collections of $\lessd$-supercompact and $\lessd$-strongly compact cardinals below $\delta$ to coincide (whenever that is possible). For this, it suffices to adapt the arguments in \cite{apter-laver-compact} to $V_\delta$, for a Vop\v enka cardinal $\delta$. We need a universal Laver function for all the $\lessd$-supercompact cardinals below $\delta$. We omit the proof, since it follows merely from the fact that the definition of the Laver function is uniform, i.e. it does not depend on the particular supercompact cardinal in consideration. For more details, see \cite{apter-laver-compact}.

\begin{lemma}[\cite{apter-laver-compact}]\label{lemma:laver}
There is a function $f:\delta\to V_\delta$ such that whenever $\kappa<\delta$ is $\lessd$-supercompact, $f\restriction \kappa:\kappa\to V_\kappa$ is a Laver function. This means that for any $x\in V_\delta$ and $\lambda<\delta$ such that $x\in H_{\lambda}$, there is a $\lambda$-supercompactness embedding $j:V\to M$ with $j(f)(\kappa)=x$. Moreover, $f$ can be defined so that $f(\alpha)=0$ if $\alpha$ is $\lessd$-supercompact or $\alpha$ is not measurable.
\end{lemma}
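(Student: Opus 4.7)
The plan is to build $f$ by transfinite recursion in the Laver style, so that at each stage we either set $f(\alpha)=0$ for trivial reasons (building in the vanishing clause directly) or pick a ``least counterexample'' that forces the anticipation property to hold at every $\lessd$-supercompact $\kappa$.

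Concretely, fix a canonical well-ordering $\prec$ of pairs $(x,\lambda)$ with $\lambda<\delta$ and $x\in H_\lambda$. Define $f:\delta\to V_\delta$ by recursion on $\alpha<\delta$: if $\alpha$ is not measurable, or $\alpha$ is $\lessd$-supercompact, put $f(\alpha)=0$; otherwise set $f(\alpha)=x$, where $(x,\lambda)$ is the $\prec$-least pair such that no $\lambda$-supercompactness embedding $j\colon V\to N$ with $\crit(j)=\alpha$ satisfies $j(f\restriction \alpha)(\alpha)=x$, and put $f(\alpha)=0$ if no such pair exists. The ``Moreover'' clause of the lemma is immediate.

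To verify the main Laver property, fix a $\lessd$-supercompact cardinal $\kappa<\delta$ and suppose for contradiction that $f\restriction\kappa$ fails the Laver property. Let $(x_0,\lambda_0)$ be the $\prec$-least counterexample and choose $\mu<\delta$ with $\mu>2^{\lambda_0^{<\kappa}}$ and $(x_0,\lambda_0)\in V_\mu$. Using $\lessd$-supercompactness, select a $\mu$-supercompactness embedding $j\colon V\to M$ coming from a normal fine ultrafilter $U$ on $\powerset_\kappa\mu$ of minimal Mitchell rank; this guarantees that $\kappa$ is still measurable in $M$ but not $\mu$-supercompact there, hence not ${<}j(\delta)$-supercompact in $M$. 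So the nontrivial branch of the recursion applies to $j(f)$ at stage $\kappa$ inside $M$. A $\mu$-closure argument (outlined below) shows that $(x_0,\lambda_0)$ remains the $\prec$-least counterexample when the recursion is computed in $M$, so $j(f)(\kappa)=x_0$. Finally, factor $j$ as $k\circ j_0$ through the $\lambda_0$-supercompactness embedding $j_0\colon V\to M_0$ induced by the projection of $U$ to $\powerset_\kappa\lambda_0$. The factor map $k$ has critical point strictly above $\lambda_0$, so $k$ fixes every element of $H_{\lambda_0}$; in particular $k(j_0(f)(\kappa))=x_0$ forces $j_0(f)(\kappa)=x_0$, contradicting the choice of $(x_0,\lambda_0)$ as a counterexample.

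The main obstacle is the reflection step that $(x_0,\lambda_0)$ is still the $\prec$-least counterexample when the recursion is carried out inside $M$. Two pieces are needed: first, $\kappa$ must land in the nontrivial branch of the recursion in $M$, which is secured by the Mitchell-minimality choice of $U$; second, the $\lambda$-supercompactness embeddings available in $M$ (for $\lambda\preceq\lambda_0$) must produce the same values $j'(f\restriction\kappa)(\kappa)$ as those available in $V$, so that the two recipes for the least counterexample agree. This agreement follows from the $\mu$-closure of $M$: every normal fine ultrafilter on $\powerset_\kappa\lambda$ with $\lambda\le\lambda_0$ sits in both $V$ and $M$, and since the representing functions for $j(f\restriction\kappa)(\kappa)$ take values in $V_\kappa\subseteq M$, the value at $\kappa$ in the two ultrapowers lies in $H_{\lambda_0}$ and is computed identically.
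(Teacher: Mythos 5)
Your proof is correct and is essentially the argument the paper has in mind: the paper omits the proof entirely, remarking only that the Laver construction is uniform in the supercompact cardinal in question and deferring to Apter, and your recursion on a least counterexample together with the reflection-and-factoring verification is exactly that standard construction, with the Mitchell-minimality twist added to accommodate the vanishing clause at $\lessd$-supercompact stages. One cosmetic point: you do not actually need $(x_0,\lambda_0)$ itself to survive as the $j(\prec)$-least counterexample in $M$ (the secondary well-ordering at level $\lambda_0$ may be moved by $j$); it suffices that the least counterexample in $M$ occurs at some level $\lambda_1\le\lambda_0$, is by the closure of $M$ also a counterexample in $V$, and is then refuted by factoring $j$ through the $\lambda_1$-projection of $U$.
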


Using $f$, we define an Easton support $\delta$-iteration $\p=\langle \p_\alpha,\dotq_\beta\mid \alpha\leq\delta,\beta<\delta\rangle$ along with ordinals $\{\rho_\alpha\mid \alpha<\delta\}$ as follows. Suppose $\p_\alpha$ has been defined, $\rho_\beta<\alpha$ for all $\beta<\alpha$ and $f(\alpha)=\langle \dotq,\sigma\rangle$, where $\dotq$ is a $\p_\alpha$-name for an $\alpha$-directed closed forcing and $\sigma>\alpha$ is regular after forcing with $\p\ast \dotq$. In this case, let $\rho_\alpha=\alpha$, let $\gamma_\alpha=\sup \{\kappa<\alpha\mid \alpha$ is $\lessd$-supercompact$\}$ (and $\gamma_\alpha=\omega$ if there are no $\lessd$-supercompact cardinals below $\alpha$) and define  $\dotq_\alpha=\dotq\ast \dotr_{\gamma_\alpha,\sigma}$, where $\dotr_{\gamma_\alpha,\sigma}$ is a name for the forcing that adds a non-reflecting stationary subset to $\sigma$, consisting of ordinals of cofinality $\gamma_\alpha$. In any other case, $\dotq_\alpha$ is a name for the trivial forcing notion. 

Let $G\subseteq \p$ be a $V$-generic filter and denote $V[G]$ by $W$. It is not hard to see that this iteration satisfies the clauses of Theorem 15 in \cite{brooke-taylor-indestructibility}, so $\delta$ remains Vop\v enka in $W$. Also, the usual proof shows that if $\kappa<\delta$ is $\lessd$-supercompact in $V$, then it remains so in $W$. Note that since $\p$ has plenty of closure points, by \ref{thm:cov-approx} no new $\lessd$-supercompact cardinals are created. We now show that if a cardinal $\kappa$ is $\lessd$-strongly compact in $W$, it was either $\lessd$-supercompact in $V$ or a measurable limit of those.

If neither holds, let $\kappa_0$ be the least regular cardinal greater than $\sup \{\alpha<\kappa\mid \alpha$ is $\lessd$-supercompact$\}$ and $\kappa_1$ the first $\lessd$-supercompact above $\kappa$. Let $j:V\to M$ be a $\kappa_1^+$-supercompactness embedding with $\crit(j)=\kappa_1$ and $j(f)(\kappa)=\langle \dotq,\kappa_1^+\rangle$, where $\dotq$ is a name for the trivial forcing. By the definition of $\p$, $j(\p)$ will have the form $\p_{\kappa_1}\ast \dotq\ast \dotr_{\kappa_0,\kappa_1^+}\ast \dotp_{tail}$, where $\dotp_{tail}$ is a name for the stages above $\kappa$. Thus, $\Vdash_{j(\p)} ``\kappa_1^+$ carried a non-reflecting stationary set of ordinal of cofinality $\kappa_0=j(\kappa_0)$. By \L o\'s' theorem, there are unbounded many $\alpha<\kappa_1$ such that $\p$ forces that $\alpha^+$ has a non-reflecting stationary subset of ordinals of cofinality $\kappa_0$. But this implies that there are no $\lessd$-strongly compact cardinals in $(\kappa_0,\kappa_1)$ in $W$, which contradicts our assumption.

It remains to force with the GCH forcing over $W$ to obtain a universe $W[H]$. It is standard that all measurable and supercompact cardinals are preserved and no new such cardinals are created. Thus, $W[H]$ is the required model.
\end{proof}

\section{Generalisations and questions}

After establishing an identity crisis for the first witness of some large cardinal property, it is customary to try and control the first $n$ witnesses for some $n\in\omega$ or even a proper class of them.

Unlike the difficulties presented in the case of making a class of measurable cardinals coincide with a class of strongly compact cardinals, we show that we can have a proper class of Woodin cardinals coinciding precisely with the Woodin for strong compactness cardinals. The proof is in the spirit of Theorem 2 in \cite{identity-crises-II}.

\begin{thm}\label{thm:many-woodin}
Suppose there is a proper class of Vop\v enka cardinals and that GCH holds. Then we can construct a model in which there is a proper class of Woodin for strong compactness cardinals which coincides with the class of Woodin cardinals.
\end{thm}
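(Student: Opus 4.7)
The plan is to iterate, along the class of all ordinals, the forcing used in Theorem \ref{thm:first-woodin}, arranging the nontrivial stages so that the Woodin cardinals of the extension are exactly the Vop\v enka cardinals of $V$---which will then coincide with its Woodin for strong compactness cardinals. Let $\langle \delta_\beta : \beta \in \ord\rangle$ enumerate the Vop\v enka cardinals of $V$, and define a class-length Easton support iteration $\p = \langle \p_\alpha, \dotq_\beta \mid \alpha,\beta \in \ord\rangle$ by: $\dotq_0$ names $\Add(\omega,1)$; for $\alpha>0$, $\dotq_\alpha$ names the forcing which shoots a club of non-${<}\alpha$-strong cardinals below $\alpha$ if $\alpha$ is Woodin in $V$ but is not Vop\v enka, and is trivial otherwise. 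Let $G\subseteq\p$ be $V$-generic and set $W=V[G]$.

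The factorisation $\p = \p_{\delta_\beta+1} \ast \p^{>\delta_\beta}$ at each $\delta_\beta$ presents the tail $\p^{>\delta_\beta}$ as $\delta_\beta^+$-strategically closed in the corresponding intermediate extension, since the next nontrivial stage occurs at a Woodin cardinal strictly above $\delta_\beta$; so $(V_{\delta_\beta+1})^W$ is already decided by $\p_{\delta_\beta+1}$ alone. Consequently, the argument of Theorem \ref{thm:first-woodin} transfers essentially verbatim to each $\delta_\beta$ in the role of $\delta$: the three Claims used there only require that the forcing above a targeted $\kappa<\delta_\beta$ be $\kappa^+$-strategically closed, which holds in our setting (the only difference with Theorem \ref{thm:first-woodin} is that we have omitted the nontrivial stages at Vop\v enka cardinals $\delta_\gamma<\delta_\beta$, but these are never used in an essential way in that proof). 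Hence each $\delta_\beta$ remains Woodin for strong compactness, and in particular Woodin, in $W$.

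In the other direction, any ordinal $\alpha$ which was Woodin in $V$ but is not Vop\v enka has its Woodinness destroyed by $\dotq_\alpha$, and the highly closed tail cannot undo this; moreover, by the closure point at $\omega$ introduced at stage $0$ together with Theorem \ref{thm:cov-approx}, no new instances of strongness---hence no new Woodin cardinals---arise in $W$. Thus the Woodin cardinals of $W$ are exactly the $\delta_\beta$'s, which coincide with the Woodin for strong compactness cardinals of $W$ by the previous paragraph. The main obstacle will be faithfully re-running the lifting arguments of Theorem \ref{thm:first-woodin} inside the intermediate extension $V[G_{\delta_\beta+1}]$, in particular the master condition and diagonal generic constructions for the embeddings witnessing Woodinised strong compactness at $\delta_\beta$; this is handled by the factorisation together with the high closure of $\p^{>\delta_\beta}$, which guarantees that all the covering data relevant to $\delta_\beta$ is unchanged in passing from $V[G_{\delta_\beta+1}]$ to $W$.
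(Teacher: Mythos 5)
Your construction differs from the paper's in two ways: you use a single class-length iteration with nontrivial stages at every Woodin non-Vop\v enka cardinal, whereas the paper takes an Easton \emph{product} $\prod_{\alpha<\sigma}\p_\alpha$ of separate interval iterations, each $\p_\alpha$ only destroying Woodin cardinals in $(\gamma_\alpha,\delta_\alpha)$ where $\gamma_\alpha=\sup\{\xi<\delta_\alpha\mid\xi\text{ is Vop\v enka}\}$; and, crucially, the paper cuts the whole construction off at $\sigma$, the least inaccessible limit of Vop\v enka cardinals (taking $W=(V_\sigma)^{V[G]}$ if $\sigma$ exists), while you run over all of $\ord$ with no restriction. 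The cutoff is not cosmetic. Below $\sigma$ no Vop\v enka cardinal is a limit of Vop\v enka cardinals, so $\prod_{\beta<\alpha}\p_\beta$ is genuinely small relative to $\delta_\alpha$ and a L\'evy--Solovay argument finishes the preservation; and every limit of Vop\v enka cardinals below $\sigma$ is singular, hence not Woodin, so the ``only the $\delta_\alpha$ are Woodin'' claim needs no extra work. The paper explicitly records that its model has no inaccessible limit of Woodin for strong compactness cardinals and poses the unrestricted version of the theorem as an \emph{open question}.

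Your proposal therefore claims strictly more than the paper proves, and the point where the extra strength would have to be earned is exactly where you write that the argument of Theorem \ref{thm:first-woodin} ``transfers essentially verbatim.'' It does not, in the two new configurations your construction creates: (i) a Vop\v enka cardinal $\delta_\beta$ that is itself a limit of Vop\v enka cardinals, where the initial segment $\p_{\delta_\beta}$ now has its trivial (Vop\v enka) stages interleaved cofinally with club-shooting stages all the way up to $\delta_\beta$ --- a situation the proof of Theorem \ref{thm:first-woodin} never faces and for which the choice of the auxiliary $\lambda$'s, the agreement of $\p$ with $j(\p)$, and the closure computations all need to be re-verified rather than cited; and (ii) a Woodin non-Vop\v enka cardinal $\rho$ that is a limit of Vop\v enka cardinals, where you must check that shooting a club of non-${<}\rho$-strong cardinals through $\rho$ kills its Woodinness while the $\delta_\gamma<\rho$ retain Woodinised strong compactness, and that this is compatible with the lifting arguments performed for $\delta_\beta>\rho$. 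Neither is addressed. If you add the paper's restriction (work below the least inaccessible limit of Vop\v enka cardinals, or assume there is none), your iteration-based argument becomes an acceptable repackaging of the paper's product-based one; without it, the decisive step is missing and, as written, you would be resolving one of the paper's own open questions by assertion.
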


\begin{proof}
If there is an inaccessible limit of Vop\v enka cardinals and let $\sigma$ be the least such and otherwise, let $\sigma=\ord$. Let $\langle \delta_\alpha\mid \alpha\in \sigma\rangle$ be an increasing enumeration of the Vop\v enka cardinals below $\sigma$. For each $\alpha$, let $\p_\alpha$ denote the Easton support $\delta_\alpha$-iteration defined as in the proof of Theorem \ref{thm:first-woodin}, destroying Woodin cardinals in the interval $(\gamma_\alpha,\delta_\alpha)$, where $\gamma_\alpha=\sup\{\xi<\delta_\alpha\mid \xi$ is Vop\v enka$\}$.

Let $\p$ be the Easton product $\prod_{\alpha<\sigma} \p_\alpha$. In case $\sigma=\ord$, the standard arguments show that $\p$ preserves ZFC. We argue that after forcing with $\p$, each $\delta_\alpha$ is Woodin for strong compactness and not Vop\v enka, and these are the only Woodin cardinals below $\sigma$. Fix some $\alpha<\sigma$ and factorise the forcing as 
$$\prod_{\beta<\alpha} \p_\beta \times \p_\alpha \times \prod_{\beta>\alpha} \p_\beta.$$
Let $G=G_{<\alpha}\times G_\alpha\times G_{>\alpha}$ be a $V$-generic filter for $\p$. The forcing $\prod_{\beta>\alpha}\p_\beta$ is $\delta_\alpha^+$-distributive, so $\delta_\alpha$ remains Vop\v enka in $V[G_{>\alpha}]$. As in the proof of \ref{thm:first-woodin}, ading the generic filter $G_\alpha\subseteq\p_\alpha$ makes $\delta_\alpha$ Woodin for strong compactness and not Vop\v enka in $V[G_{>\alpha}][G_\alpha]$, while killing all Woodin cardinals in $(\gamma_\alpha,\delta_\alpha)$.  Finally, $\prod_{\beta<\alpha}\p_\beta$ is small compared to $\delta_\alpha$, so in $V[G]$, $\delta_\alpha$ is still Woodin for strong compactness. 

We claim that if $\rho<\sigma$ is a Woodin cardinal in $V[G]$, then $\rho=\delta_\alpha$ for some $\alpha<\sigma$. Otherwise, there is $\alpha$ such that $\delta_\alpha<\rho<\delta_{\alpha+1}$ and then the forcing $\p_{\alpha+1}$ shot a club at $\rho$ destroying its Woodinness. The rest of $\p$ cannot change this fact, so $\rho$ is not Woodin in $V[G]$ which is absurd. 

Therefore, the universe $W=(V_\sigma)^{V[G]}$ if $\sigma$ is inaccessible, or $V[G]$ if $\sigma=\ord$, has a proper class of Woodin for strong compactness cardinals which coincide with the Woodin cardinals. 
\end{proof}

Using similar arguments, we can show that the dual holds too.

\begin{thm}\label{thm:many-vopenka}
Suppose there is a proper class of Vop\v enka cardinals and that GCH holds. Then we can construct a model where there is a proper class of Woodin for strong compactness cardinals which coincide with the Woodin limit of supercompact cardinals.
\end{thm}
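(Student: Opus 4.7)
The plan is to mirror the proof of Theorem \ref{thm:many-woodin}, swapping the building block of Theorem \ref{thm:first-woodin} for that of Theorem \ref{thm:first-vopenka}. I would first enumerate the Vop\v enka cardinals as $\langle \delta_\alpha \mid \alpha<\sigma\rangle$, where $\sigma$ is the least inaccessible limit of Vop\v enka cardinals if such exists and $\ord$ otherwise, and set $\gamma_\alpha=\sup\{\xi<\delta_\alpha \mid \xi \text{ is Vop\v enka}\}$. For each $\alpha<\sigma$, let $\p_\alpha$ be the forcing from the proof of Theorem \ref{thm:first-vopenka} relativised to the interval $(\gamma_\alpha,\delta_\alpha)$: that is, first the preparatory iteration of Theorem \ref{thm:vopenka-eq} making the ${<}\delta_\alpha$-strongly compact cardinals in $(\gamma_\alpha,\delta_\alpha)$ coincide with the ${<}\delta_\alpha$-supercompact ones (or measurable limits thereof) and restoring GCH, followed by an $\Add(\omega,1)$ providing a low closure point, and then the Easton support iteration shooting a club of singular cardinals through each Woodin for strong compactness cardinal in $(\gamma_\alpha,\delta_\alpha)$.

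Let $\p=\prod_{\alpha<\sigma}\p_\alpha$ be the Easton product and let $G=\prod_\alpha G_\alpha$ be $V$-generic for $\p$. Fixing $\alpha<\sigma$, I would factorise as
$$\prod_{\beta<\alpha}\p_\beta \times \p_\alpha \times \prod_{\beta>\alpha}\p_\beta.$$
The tail $\prod_{\beta>\alpha}\p_\beta$ is $\delta_\alpha^+$-distributive and hence preserves the Vop\v enka-ness of $\delta_\alpha$. The middle factor $\p_\alpha$ is precisely the forcing of Theorem \ref{thm:first-vopenka}, so by that argument $\delta_\alpha$ becomes a Woodin limit of ${<}\delta_\alpha$-supercompact cardinals in $V[G_{>\alpha}][G_\alpha]$ (and therefore Woodin for strong compactness), while no cardinal in $(\gamma_\alpha,\delta_\alpha)$ remains Woodin for strong compactness. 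Finally, $\prod_{\beta<\alpha}\p_\beta$ lives in $V_{\delta_\alpha}$, so it is small with respect to $\delta_\alpha$, and standard arguments propagate these properties to $V[G]$.

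The remaining point is to show that no cardinal $\rho$ strictly between consecutive $\delta_\alpha$'s is Woodin for strong compactness in $V[G]$, and that no such cardinals are created where none existed before. For $\rho\in(\delta_\alpha,\delta_{\alpha+1})$, being Woodin for strong compactness in $V[G]$ requires inaccessibility, but the component of $\p_{\alpha+1}$ shooting clubs of singular cardinals through the Woodin for strong compactness cardinals of $(\gamma_{\alpha+1},\delta_{\alpha+1})$ destroys the Mahloness of any such ground model $\rho$, and the remaining factors of $\p$ cannot restore it. Non-creation of strong compactness (hence of Woodin for strong compactness) is handled by Theorem \ref{thm:cov-approx}, applied via the $\Add(\omega,1)$ closure point sitting at the bottom of each $\p_\alpha$. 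The desired model is $V[G]$ when $\sigma=\ord$ and $(V_\sigma)^{V[G]}$ when $\sigma$ is inaccessible.

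The main obstacle I anticipate is verifying that the delicate embedding-lifting arguments in the proof of Theorem \ref{thm:first-vopenka}, in particular the master-condition construction at the $(\kappa,\lambda)$-stages and the diagonalisation for the tail of $j(\p_\kappa)$, still go through once $\p_\alpha$ is sandwiched inside the product $\p$ between the small factor below and the highly closed tail above. Each step of the lift has to be checked against the product structure, but since $\prod_{\beta>\alpha}\p_\beta$ is $\delta_\alpha^+$-distributive and $\prod_{\beta<\alpha}\p_\beta$ is absorbed into $V_{\delta_\alpha}$, this should amount to routine closure-and-cardinality bookkeeping entirely analogous to what appears in the proof of Theorem \ref{thm:many-woodin}.
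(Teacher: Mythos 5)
Your proposal matches the paper's proof essentially step for step: the same enumeration $\langle\delta_\alpha\mid\alpha<\sigma\rangle$, the same per-block forcing (the relativised preparation of Theorem \ref{thm:vopenka-eq} followed by the club-shooting iteration of Theorem \ref{thm:first-vopenka}), the same Easton product and three-way factorisation, and the same argument that nothing strictly between consecutive $\delta_\alpha$'s survives. The only detail the paper makes explicit that you leave implicit in ``relativised to $(\gamma_\alpha,\delta_\alpha)$'' is that at non-trivial stages with no ${<}\delta_\alpha$-supercompacts below, the non-reflecting stationary sets should consist of ordinals of cofinality $\gamma_\alpha$.
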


\begin{proof}
As previously, let $\sigma$ be the first inaccessible limit of Vop\v enka cardinals if there is such a cardinal, or let $\sigma=\ord$ otherwise. Let $\langle \delta_\alpha\mid \alpha<\sigma\rangle$ be an increasing enumeration of the Vop\v enka cardinals below $\sigma$. For each $\alpha<\sigma$, we define a two-step iteration $\p_\alpha\ast \dotq_\alpha$ as follows. $\p_\alpha$ is the forcing defined in the proof of \ref{thm:first-vopenka} with the following changes:
\begin{enumerate}
	\item the Laver function we use has domain $(\gamma_\alpha,\delta_\alpha)$, where $\gamma_\alpha=\sup \{\xi<\delta_\alpha\mid \xi$ is Vop\v enka$\}$,
	\item if $\xi$ is a non-trivial stage and there are no ${<}\delta_\alpha$-supercompact cardinals below $\xi$, then the non-reflecting stationary set added consists of ordinals of cofinality $\gamma_\alpha$.
\end{enumerate}
Then, let $\dotq_\alpha$ be a name for the forcing which destoys all Woodin for strong compactness cardinal in $(\gamma_\alpha,\delta_\alpha)$, as in \ref{thm:first-vopenka}. In fact, the proof of \ref{thm:first-vopenka} shows that $\p_\alpha\ast \dotq_\alpha$ preserves the fact that $\delta_\alpha$ is a Woodin limit of $\lessd_\alpha$ supercompact cardinals, while killing all Woodin for strong compactness cardinals in $(\gamma_\alpha,\delta_\alpha)$.

Now, if we let $\p$ be the Easton product $\prod_{\alpha<\sigma} (\p_\alpha\ast \dotq_\alpha)$, then we can argue as in \ref{thm:many-woodin} to show that for all $\alpha<\sigma$, $\delta_\alpha$ is a Woodin limit of ${<}\delta_\alpha$-supercompact cardinals and so, Woodin for strong compactness and there are no other Woodin for strong compactness cardinals. Thus, the universe $W=(V_\sigma)^{V[G]}$ if $\sigma$ is inaccessible, or $V[G]$ if $\sigma=\ord$, has a proper class of Woodin limits of supercompact cardinals which coincide with the Woodin for strong compactness cardinals.  
\end{proof}

We finish by mentioning some open questions. For both \ref{thm:first-woodin} and \ref{thm:first-vopenka} we assumed the existence of a Vop\v enka cardinal. It it still open whether the assumptions in both theorems can be reduced.

\begin{qst}
Can we reduce the large cardinal assumptions of Theorem \ref{thm:first-woodin}? That is to start with a Woodin for strong compactness cardinal instead of a Vop\v enka cardinal.
\end{qst}

\begin{qst}
Can we reduce the large cardinal assumptions of Theorem \ref{thm:first-woodin}? That is to start with a Woodin limit of supercompacts instead of a Vop\v enka cardinal.
\end{qst}

Moreover, the model induced in \ref{thm:many-woodin} and \ref{thm:many-vopenka} has no inaccessible limit of Woodin for strong compactness cardinals. 

\begin{qst}
Can we prove \ref{thm:many-woodin} or \ref{thm:many-vopenka} without any restrictions on the large cardinal structure?
\end{qst}

Since the assumptions of \ref{thm:first-woodin} include GCH, we also ask the following.

\begin{qst}
Can we force GCH in the presence of a Woodin for strong compactness cardinal? Even more, can we realise Easton functions in the presence of a Woodin for strong compactness cardinal?
\end{qst}

Note that the same question is still open for strongly compact cardinals, i.e. it is still unknown whether we can control the continuum or even force GCH in the presence of a strongly compact cardinal without assuming supercompactness.

Finally, as for strongly compact cardinals, the consistency strength of Woodin for strong compactness cardinals remains unclear. A lower bound is a proper class of strongly compact cardinals and an upper bound is a Woodin limit of supercompact cardinals, which lies below an extendible cardinal.

\begin{qst}
What is the exact consistency strength of a Woodin for strong compactness cardinal?
\end{qst}

\subsection*{Acknowledgements}
The results presented here are part of the author's doctoral thesis, supported by the UK Engineering and Physical Sciences Research Council (EPSRC) DTA studentship 14 EP/M506473/1. The author wishes to thank his supervisors Andrew Brooke-Taylor and Philip Welch for their guidance and support. The author is also grateful to the anonymous referee for their careful reading, for finding certain mistakes in the original preprint of this article and for their helpful suggestions, which considerably improved the presentation of the results. 

\bibliographystyle{plain}
\bibliography{math_refs}

\end{document}